\documentclass{article}
\usepackage{latexsym,amsfonts,amsthm,amsmath,mathrsfs}
\usepackage{color}
\setlength{\textwidth}{16cm} \setlength{\oddsidemargin}{0cm}
\setlength{\evensidemargin}{0cm} \setlength{\topmargin}{0cm}
\setlength{\textheight}{22cm} \catcode`@=11
\numberwithin{equation}{section}
\newtheorem{Theorem}{Theorem}[section]
\newtheorem{Proposition}{Proposition}[section]
\newtheorem{Lemma}{Lemma}[section]
\newtheorem{Corollary}{Corollary}[section]
\theoremstyle{definition}
\newtheorem{Assumptions}{Hypotheses}[section]
\newtheorem{Assumption}{Hypothesis}[section]

\theoremstyle{definition}
\newtheorem{Definition}{Definition}[section]
\newtheorem{rem}{Remark}[section]

\def\R{\mathbb{R}}

\def\div{\text{div}}

\def\deg{\mathop{\rm deg}\nolimits}

\def\to{\rightarrow}

\newcommand{\uk}{\underline{k}}
\newcommand{\ok}{\overline{k}}

\newcommand{\eu}{u_{\varepsilon}}
\newcommand{\ev}{v_{\varepsilon}}

\newcommand{\Int}{\iint_{Q_T}}
\newcommand{\diver}{\operatorname{div}}

\pagestyle{myheadings}

\title{Non-trivial, non-negative periodic solutions of a system of singular-degenerate parabolic equations with nonlocal terms}
\author{{\sc G. Fragnelli}\\
Dipartimento di Matematica\\ Universit\`{a} di Bari "Aldo Moro"\\
Via
E. Orabona 4\\ 70125 Bari - Italy\\ email: genni.fragnelli@uniba.it\\
{\sc D. Mugnai }\\
Dipartimento di Matematica e Informatica\\Universit\`a di
Perugia\\Via Vanvitelli 1, 06123
Perugia - Italy\\ email:  mugnai@dmi.unipg.it\\
{\sc P. Nistri and D. Papini}\\
Dipartimento di Ingegneria Informatica e Scienze Matematiche\\ Universit\`{a} di Siena\\
Via Roma 56\\ 53100 Siena - Italy\\ email:
 pnistri@dii.unisi.it, papini@dii.unisi.it}

\date{}
\begin{document}

\maketitle

\begin{abstract} We study the existence of non-trivial, non-negative
periodic solutions for systems of singular-degenerate parabolic
equations with nonlocal terms and satisfying Dirichlet boundary
conditions. The method employed in this paper is based on the
Leray-Schauder topological degree theory. However, verifying the
conditions under which such a theory applies is more involved due to
the presence of the singularity. The system can be regarded as a
possible model of the interactions of two biological species sharing
the same isolated territory, and our results give conditions that
ensure the coexistence of the two species.
\end{abstract}

\noindent Keywords: singular-degenerate parabolic equations, periodic solutions, a priori bounds, topological degree theory.

\noindent 2000AMS Subject Classification: 35K65, 35B10, 47H11
\section{Introduction}
In this paper we consider a periodic system of singular-degenerate
parabolic equations with delayed nonlocal terms and Dirichlet
boundary conditions of the form
\begin{equation}\label{1}
\begin{cases}
\begin{aligned}
u_t-\div(|\nabla u^m|^{p-2}\nabla
u^m)\!\!=\!\left(a(x,t)-\!\int_{\Omega}\!K_1(\xi,t)u^2(\xi, t-\tau_1)d\xi\!+\!\int_{\Omega}\!K_2(\xi,t)v^2(\xi, t-\tau_2)d\xi\right)u^{p-1}, \text{in } Q_T\\
v_t-\div(|\nabla v^n|^{q-2}\nabla
v^n)\!\!=\!\left(b(x,t)+\!\int_{\Omega}\!K_3(\xi,t)u^2(\xi, t-\tau_3)d\xi-\!\int_{\Omega}\!K_4(\xi,t)v^2(\xi, t-\tau_4)d\xi\right)v^{q-1},   \text{in } Q_T\\
\end{aligned}\\
u(x, t) = v(x,t)=0,\qquad\qquad \qquad \qquad\qquad \qquad\qquad  \;\; \text{for }(x,t) \in  \partial \Omega \times (0,T), \\
u(\cdot,0)= u(\cdot,T)\text{ and }v(\cdot,0)= v(\cdot,T), \\
\end{cases}
\end{equation}
and we look for continuous weak solutions. Here $\Omega$ is an open
bounded domain of $\R^N$ with smooth boundary $\partial \Omega$,
satisfying the property of positive geometric density, see
\cite{lsu}, $Q_T:= \Omega \times (0,T), T>0$,
$\tau_i\in(0,+\infty)$, the functions $K_i,a,$ and $b$ belong to $
L^\infty(Q_T)$. The exponents $p$ and $ q $ belong to the interval
$(1,2)$, $m> p$, $n > q$ and $s^m=|s|^{m-1}s$. Setting $Au:=
\div(|\nabla u^m|^{p-2}\nabla u^m)$ and $l:= (m-1)(p-1)$, the
operator $Au$ becomes $m^{p-1}\div(|u|^l|\nabla u|^{p-2}\nabla u)$,
which is the operator considered by Ivanov in \cite{i1}, \cite{i2} and \cite{i3}.
According to the classification proposed in these
papers, we say that the first equation in (\ref{1}) is of
\begin{enumerate}
\item \textit{slow diffusion} type if $m > \frac{1}{p-1}$,
\item \textit{normal diffusion} type if $m = \frac{1}{p-1}$,
\item \textit{fast diffusion} type if $m < \frac{1}{p-1}$.
\end{enumerate}
Of course, analogous definition in terms of $n$ and $q$ can be given
for the second equation in \eqref{1}.

The aim of this paper is to extend the results of \cite{fnp} and
\cite{fnp1}, concerning the existence of non-negative, non-trivial
periodic solutions, to a system of singular-degenerate parabolic
equations.
To the best of our knowledge, this is the first result
for the case when $ 1 < p, q < 2 $, $m>p$ and $ n>q$, also in the case of
a single equation.
We recall that the cases $ p, q > 2$,  $ m, n > 1 $ and $ p = q = 2$, $ m, n > 1 $
were treated respectively in \cite{fnp} and \cite{fnp1},
see also \cite{F} and
\cite{FA} for a system of anisotropic $ ( p(x), q(x) ) $-Laplacian parabolic equations,
with $ p(x), q(x) > 2 $ in $ \overline{\Omega} $, and $ m = n = 1 $.
In the very recent paper \cite{WYK}, the authors replace the nonlocal terms of \eqref{1}
by $ \int_{\Omega} K_{i}( \xi, t ) u( \xi, t ) d\xi $, for $ i = 1, 3 $, and
$ \int_{\Omega} K_{i}( \xi, t ) v( \xi, t ) d\xi $, for $ i = 2, 4 $.
By means of local conditions, different from those proposed in \cite{fnp, fnp1},
the authors obtain the coexistence of the two species via a similar topological approach
when $ p, q \ge 2 $, $ m, n \ge 1 $ and, thus, only the slow and normal diffusion occurs, i.e.
$ m ( p - 1 ) \ge 1, n ( q - 1 ) \ge 1 $.
More precisely, models for the interaction between two biological species sharing the same isolated territory, with the interactions
represented by means of the kernels $ K_{i}, i = 1, 2, 3, 4 $, were considered in related systems of doubly
degenerate parabolic equations in \cite{fnp}, \cite{WYK} and in systems of porous medium equations in \cite{fnp1}.
On the other hand, some previous biological models found in the
literature, see e.g. \cite{amn}, \cite{ar}, \cite{os}, \cite{oss}
involve the $p$-Laplacian with $p>1$ (and $m=1$).
Furthermore, we observe that the equations of the system we consider treat all the
possible types of diffusion: slow, normal and fast, while in
\cite{fnp}, \cite{WYK} and \cite{fnp1} only the slow and normal diffusions were
presented.
In fact, as it can be easily checked, if $ p \in \left( 1, \frac{ 1 + \sqrt{5} }{ 2 } \right) $ one can have all the three types of diffusion,
while if $ p \in \left[ \frac{ 1 + \sqrt{5} }{ 2 }, 2 \right) $ only the slow diffusion is possible under the condition $ m > p $.

In the case of the fast diffusion and superlinear growth in $ u, v $ of the right hand sides,
the solutions may blow up or vanish in some finite time depending on the initial conditions
as illustrated in \cite{bb}, \cite{LeRoux}, \cite{LeRouxMainge}, \cite{md} and the
references therein for the simple equation obtained from the first of system \eqref{1} by letting $ p = 2 $,
$ a > 0 $ constant and all the kernels $ K_{i} \equiv 0 $
(observe that in the case when $ p = q = 2 $ no restriction on $ m, n $ are required, see Remark~\ref{rem:mn}).
If $ \Omega = \R^{N} $ for such equation we have that the solution blows up for any initial condition in the case when
the superlinear growth in $ u $ is less than a certain critical exponent, see \cite{md}, and the same occurs for doubly degenerate parabolic equations, see \cite{MuZhengLiu}.
If the growth is linear or sublinear we do not have blow up of any solution, see \cite{LeRouxMainge},
hence solutions exist for all $ t \ge 0 $ and in the linear case, depending on the initial condition, they may vanish in finite time
or become unbounded as $ t \to +\infty $ and, thus, the considered initial conditions cannot give rise to a periodic solution.

The choice of the sublinear exponents $ p - 1 $ and $ q - 1 $, respectively, for $ u $ and $ v $ in \eqref{1} is mainly technical since it depends
on the topological method employed in the paper, which is based on a priori bounds of the solutions.
Indeed, this choice enable us to establish the required a priori bounds on the solutions of the approximating problem \eqref{2} in a uniform way with respect to
the perturbation parameter $ \epsilon > 0 $.
We remark that, if the diffusion is slow and $ \Omega \subset \R^{N}$ is a bounded and open domain,
then we can allow a superlinear growth in $ u, v $ in order to have both global existence solutions and periodic solutions,
together with their $ L^{\infty} $-estimates, see \cite{AnDi}, \cite{c}, \cite{cr}, \cite{m}, \cite {n},
\cite{ohara}, \cite{wgs}, \cite{WYK}, \cite{wyw1}, \cite{wyw2}, \cite{wyw} and the references therein.
Due to the singularity of the $ p, q $-Laplacian, the way of proving the a priori bounds deeply differs from that employed in \cite{fnp} and \cite{fnp1}.
Moreover, in order to pass from the $ L^{2} $- to the $ L^{\infty} $-estimates, in Lemma~\ref{limitatezza} we have readapted Moser's technique to the case
when $ 1 < p, q < 2 $.
Moreover we have to impose the technical restriction $ m > p $ and $ n > q $ in order to get the gradient estimates in Lemma~\ref{stima-gradiente}.

Due to their importance in different physical and other natural
sciences such as non-Newtonian fluid mechanics, flow in porous
medium, nonlinear elasticity, glaciology, population biology etc..,
degenerate and singular parabolic equations have been the subject of
extensive research in the last $25$ years, with particular emphasis
on the study of regularity for non-negative weak solutions. We
mention here, among many others, the papers \cite{i1},
\cite{i2}, \cite{i3}, \cite{pv} and the monographs \cite{db}, \cite{v}. In
particular,  we refer to the very recent monograph \cite{dbgv} for a
comprehensive treatment of the Harnack inequality for non-negative
solutions to $p$-Laplacian and porous medium equations. Moreover,
\cite{dbgv} provides an historical presentation of the achievements
in this research field and many references to the applications
concerning the topics mentioned above.

The regularity results for the singular $p$-Laplacian are crucial
for the application of the topological degree approach used in this
paper. Similar topological methods are also employed to a great
extent for the existence of non-negative periodic solutions of
degenerate and doubly degenerate parabolic equations, see
\cite{afnp}, \cite{badii}, \cite{fm}, \cite{hpt}, \cite{hwk},
\cite{khs}, \cite{lsw}, \cite{Liu}, \cite{m}, \cite{n}, \cite{wywen}, \cite{wg}, \cite{wy},
\cite{wy1}, \cite{WYK}, \cite{wyw1}, \cite{wyw2}, \cite{wyw},
\cite{wwy}, \cite{zhwk}, \cite{zkwy}. Nonlocal models to study
aggregation in biological systems with degenerate diffusion are
proposed in several papers, see the recent \cite{bs},
\cite{li-zhang} and the references therein.

Moreover, we recall that the interest in studying the existence of
periodic solutions for degenerate and non-degenerate parabolic
equations modelling biological and physical phenomena, relies in the
consideration that the periodic behavior of certain biological and
physical non-negative quantities is the most natural and  desirable
one, see e.g. \cite{afnp}, \cite{AlMoVi}, \cite{an}, \cite{AlPa},
\cite{F}, \cite{FA}, \cite{fnp}, \cite{fnp1}, \cite{Hess},
\cite{hwk}, \cite{Jin}, \cite{lsw}, \cite{o}, \cite{wg}, \cite{wgs}, \cite{WYK},
\cite{zhwk}, \cite{zkwy}. We also recall the related problems faced
in \cite{fm1} and \cite{fm2} also for higher order operators, and in \cite{DuHsu} for $p=2$ and $N=1$.

The paper is organized as follows.
The goal of Section~\ref{regularizedproblem} is the proof of a coexistence result based on the explicit knowledge of suitable a priori bounds on the $L^2$-norms of the solutions.
The search for such bounds is carried on in Section~\ref{sec:aprioribounds}.
The reason to split the argument in this way lies in the fact that our main coexistence conditions, namely Hypotheses~\ref{ipotesi1}, are applicable regardless of any other assumption on the terms of the equations.
On the other hand, a priori bounds for the periodic solutions are more easily obtained when we focus on specific situations like the competitive (i.e. $ K_{2}, K_{3} \le 0 $) and cooperative (i.e. $ K_{2}, K_{3} \ge 0 $) cases and those in which $ K_{1}, K_{4} $ are bounded away from zero or not and other restrictions on the exponents of the left hand sides are imposed.

More precisely, in order to deal with the singular-degenerate system (\ref{1}), in
Section~\ref{regularizedproblem} we introduce an approximating
system (\ref{2}) of nondegenerate-singular equations depending on a
small parameter $\varepsilon>0$. Such equations satisfy structure
conditions which, for any $\varepsilon>0$, allow the use of
well-known regularity results, i.e. H\"{o}lder continuity, from e.g.
\cite{i2}, \cite{i3}; we will use this regularity to show that the
map which associates to any couple of functions $(f,g) \in
L^\infty(Q_T) \times L^\infty(Q_T)$ the solution of the regularized
system is a compact map from $L^\infty(Q_T)\times L^\infty(Q_T)\to
L^\infty(Q_T)\times L^\infty(Q_T)$, see Lemma~\ref{compact}. Then,
for $\varepsilon>0$, the problem of showing the existence of a
non-negative solution $(u_\varepsilon,v_\varepsilon)$ to (\ref{2})
is equivalent to showing the existence of a non-negative fixed point
of such a solution map. The way we do this in
Proposition~\ref{R-moser} is based on the classical tools of the
Leray-Schauder topological degree: first, we establish uniform (with
respect to $\varepsilon>0$) a priori bounds, in this specific case
in $L^\infty(Q_T)\times L^\infty(Q_T)$, for all possible
non-negative solutions of (\ref{2}). Then, by the homotopy
invariance of the topological degree, Proposition~\ref{R-moser}
guarantees the existence of a solution
$(u_\varepsilon,v_\varepsilon)$ of (\ref{2}) in a large ball
$B_R\subset  L^\infty(Q_T)\times L^\infty(Q_T)$. Moreover, by means
of suitable conditions on the first positive eigenvalue of the
$p$-Laplacian and on some estimates on the gradient of convenient
powers of $u_\varepsilon$ and $v_\varepsilon$ established in
Lemma~\ref{stima-gradiente}, we are able to prove that
$\|u_\varepsilon\|_{L^\infty(Q_T)}$ and
$\|v_\varepsilon\|_{L^\infty(Q_T)}$ are bounded away from zero
uniformly for $\varepsilon>0$  small enough, see
Proposition~\ref{r}. To conclude, by using the uniform bounds of
$(u_\varepsilon,v_\varepsilon)$ in $L^\infty(Q_T)\times
L^\infty(Q_T)$ and the consequent uniform H\"{o}lder continuity  of
$(u_\varepsilon,v_\varepsilon)$ in $\overline{Q_T}$, we can pass to
the limit as $\varepsilon\to 0$ and show in
Theorem~\ref{thm:generale} that $(u_\varepsilon,v_\varepsilon)$, by
passing to a subsequence if necessary, converges to a solution
$(u,v)$ of (\ref{1}) with $u\not =0$ and $v\not=0$.

In Section~\ref{sec:aprioribounds}, we give conditions on the
kernels $K_i, i=1,2,3,4$, of the nonlocal terms that suffice for the
existence of uniform a priori bounds in $L^2(Q_T)\times L^2(Q_T)$
for the solutions $(u_\varepsilon, v_\varepsilon)$ of (\ref{2}). By
Lemma~\ref{limitatezza} these a priori bounds imply uniform a priori
bounds of  $(u_\varepsilon, v_\varepsilon)$ in $L^\infty(Q_T)\times
L^\infty(Q_T)$ and so, from now on, we can proceed as outlined in
Section~\ref{regularizedproblem} in order to apply
Theorem~\ref{thm:generale}. In terms of the biological
interpretations, system (\ref{1}) is a model of the interactions of
two biological species, with density $u$ and $v$ respectively,
disliking  crowding, i.e. $m,n>1$, see \cite{GmC1}, \cite{GmC} and
\cite{o}, and whose diffusion involves, as in \cite{amn}, \cite{ar},
\cite{os} and \cite{oss}, the singular $p$-Laplacian, i.e.  $1<p<2$.
The nonlocal terms $\int_\Omega K_i(\xi,t) u^2(\xi, t-\tau_i)d\xi$
and $\int_\Omega K_i(\xi,t) v^2(\xi, t-\tau_i)d\xi$ evaluate a
weighted fraction of individuals that actually interact at time
$t>0$. Nonlocal terms in biological models were first introduced in
\cite{cp} and \cite{cps}. The delayed densities $u,v$ at time $t
-\tau_i$, that appear in the nonlocal terms, take into account the
time needed to an individual to become adult, and, thus to interact
and to compete. The conditions on $K_i, i=1,2,3,4$, have the meaning
of competitive systems if $K_i\le 0, i=2,3,$ or of cooperative
systems if $K_i\ge 0, i=2,3;$ on the other hand, we always assume
that $K_i\ge 0, i=1,4,$ to take into account the intra species
competition. The term on the right hand side of each equation in
\eqref{1} denotes the actual increasing rate of the population at
$(x,t)\in Q_T$. Related results are presented in the coexistence
Theorem~\ref{thm:coercivo}, which considers the coercive case, i.e.
$K_i\ge \underline {k_i}>0, i=1,4,$ and its consequences:
Corollary~\ref{thm:coercivocooperativo} and
\ref{thm:coercivocompetitivo} for the coercive-cooperative and
coercive-competitive cases respectively. In the non-coercive case we
prove Theorem~\ref{thm:noncoerciveweakcompetitive} for competitive
systems when the diffusion is slow or normal for both the equations,
and Theorems~\ref{thm:noncoercivebruttecostanti} and
\ref{normaldiff} under a stronger assumption on $m,p,n,q$, but
without any conditions on the sign of $K_2$ and $K_3$. Observe that
these results concerning the slow and normal diffusion are relevant
for the considered biological model, in fact the slow and normal
diffusion are more realistic for the biological models as pointed
out in \cite{GmC}, \cite{i2}, \cite{o}, \cite{oss}, \cite{wyw}.
Finally, in Section~\ref{generalizzazione}, for a
generalization of system \eqref{1} which consists in having any
power $\alpha\ge 1$ of $u$ and $v$ in the nonlocal terms, we obtain,
only in the competitive case, the coexistence
Theorem~\ref{thm:generale1} and the related
Theorem~\ref{thm:coercivo1} for the coercive case and
Theorem~\ref{thm:noncoerciveweakcompetitive1} for the non-coercive
case. Note that such a generalization of system \eqref{1} is a
completely new contribution with respect to \cite{fnp} and
\cite{fnp1}.

\section{The  approximating  problem}\label{regularizedproblem}
Throughout the paper we will make the following assumptions:
\begin{Assumptions}\label{ipotesi}
\begin{enumerate}
\item The exponents $p, q, m, n$  are such that $p, q \in (1,2)$, $m
>p$ and $n > q$.
\item The delays $\tau_i\in(0,+\infty)$, $i=1,2,3,4$.
\item The functions $a$,$ b$ and $K_i$, $i=1,2,3,4$, belong to $L^\infty(Q_T)$ and are
extended to $\Omega\times\R$ by $T$-periodicity. Moreover, $a, b$
and $K_i$, $i=1,4$, are non-negative functions and there are
constants $\uk_i,\ok_i \ge 0$, $i=2,3$, such that
\[
 -\uk_i\le
K_i(x,t)\le\ok_i \text{ for }i=2,3,
\]
for a.a. $(x,t)\in Q_T$.
\end{enumerate}
\end{Assumptions}

We now recall the definition of weak solution to
\eqref{1}.
\begin{Definition}
A pair of functions $(u, v)$ is said to be a weak solution of
\eqref{1} if $u, v\,\in C(\overline{Q}_T)$, $u^m \in L^p\big((0,T);
W^{1,p}_0(\Omega)\big)$, $v^n\in L^q\big((0,T);
W^{1,q}_0(\Omega)\big)$ and $(u,v)$ satisfies
\[
\begin{aligned}
\iint_{Q_T} &\left(- u \frac{\partial \varphi}{\partial t} +|\nabla
u^m|^{p-2}\nabla u^m \nabla \varphi - a u^{p-1} \varphi +
u^{p-1}\varphi
\int_{\Omega}[K_1(\xi,t)u^2 (\xi, t- \tau_1) -\right.\\
&\left.\;\;- K_2(\xi,t)v^2(\xi, t-\tau_2)]d\xi \right)dxdt =0
\end{aligned}
\]
and
\[
\begin{aligned}
\iint_{Q_T} &\left(- v \frac{\partial \varphi}{\partial t}+|\nabla
v^n|^{q-2}\nabla v^n \nabla \varphi - b v^{q-1} \varphi +
v^{q-1}\varphi\int_{\Omega}[-K_3(\xi,t)u^2(\xi,
t-\tau_3)+\right.\\
&\left.\;\;+ K_4(\xi,t)v^2(\xi, t-\tau_4)]d\xi \right)dxdt =0,
\end{aligned}
\]
for any $\varphi\in C^1(\overline{Q}_T)$ such that $\varphi
(x,T)=\varphi(x,0)$ for any $x\in\Omega$ and $\varphi(x,t)=0$ for
any $(x,t)\in
\partial\Omega \times [0,T]$.
\end{Definition}
Here and in the following we assume that the functions $t\mapsto
u(\cdot,t)$ and $t\mapsto v(\cdot,t)$ are extended from $[0,T]$ to
$\R$ by $T$-periodicity so that $(u,v)$ is a solution defined for
all $t\in \R^+$.

In order to study system (\ref{1}) we now consider the following
nondegenerate-singular approximating $p,q$-Laplacian system
\begin{equation}\label{2}
\begin{cases}
\begin{aligned}
&\dfrac{\partial u}{\partial t}- \div((\varepsilon +
m^{p-1}u^{(m-1)(p-1)}) |\nabla u|^{p-2}\nabla u)
= \left(
a(x,t) - \int_{\Omega}K_1(\xi,t)u^2(\xi, t-\tau_1)d\xi +\right.
\\&
\left. \qquad \qquad \qquad \qquad \qquad \qquad \qquad \qquad\qquad
\qquad \qquad \qquad +\int_{\Omega}K_2(\xi,t)v^2(\xi, t-\tau_2)d\xi
\right) u^{p-1}, \quad \text{in } Q_T,\\
&\dfrac{\partial v}{\partial t}-
\div((\varepsilon + n^{q-1}v^{(n-1)(q-1)}) |\nabla v|^{q-2}\nabla v)
= \left( b(x,t) + \int_{\Omega}K_3(\xi,t)u^2(\xi, t-\tau_3)d\xi
-\right.
\\&
\left. \qquad \qquad \qquad \qquad \qquad \qquad \qquad \qquad
\qquad \qquad \qquad \qquad - \int_{\Omega}K_4(\xi,t)v^2(\xi,
t-\tau_4)d\xi\right) v^{q-1},  \quad \text{in } Q_T,
\end{aligned}\\
u(\cdot,t)|_{\partial \Omega}= v(\cdot,t)|_{\partial \Omega}=0, \quad\text{for a.a.}\; t \in (0,T),\\
u (\cdot,0)=u(\cdot,T)\text{ and }v(\cdot,0)=v(\cdot,T),
\end{cases}
\end{equation}
where $\varepsilon>0$. A solution $(u,v)$ of \eqref{1} will be then
obtained as the limit, for $\varepsilon \to 0,$ of the solutions
$(\eu  ,\ev )$ of \eqref{2} with $\eu, \ev\geq0$. For this we give
the following definition:
\begin{Definition}\label{soldebole}
A couple of functions $(\eu  , \ev   )$ is said to be a generalized (weak)
solution of \eqref{2} if \[\eu   \in L^p(0,T; W^{1, p}_0 (\Omega))
\cap C(\overline{Q}_T),\, \ev    \in L^q(0,T; W^{1, q}_0 (\Omega))
\cap C(\overline{Q}_T)\] and $(\eu  , \ev   )$ satisfies
\[
\begin{aligned}
\Int  &\left(- u   \frac{\partial \varphi}{\partial t}
+\varepsilon |\nabla u |^{p-2}\nabla u   \nabla \varphi +|\nabla u^m
|^{p-2}\nabla
u^m\nabla \varphi - a \eu ^{p-1}\varphi   \right.\\
&\left.\quad + u^{p-1}\varphi  \int_{\Omega}[K_1(\xi,t) u ^2
(\xi, t- \tau_1) - K_2(\xi,t)  v ^2(\xi, t-\tau_2)]d\xi
\right)dxdt =0
\end{aligned}
\]
and
\[
\begin{aligned}
\Int  &\left(- v    \frac{\partial \varphi}{\partial t}
+\varepsilon |\nabla v |^{q-2}\nabla v   \nabla \varphi +|\nabla v^n
|^{q-2}\nabla
v^n\nabla \varphi  - b v^{q-1}\varphi     \right.\\
&\left.\quad +  v^{q-1} \varphi  \int_{\Omega}[K_4(\xi,t) v ^2
(\xi, t- \tau_4) - K_3(\xi,t) u^2(\xi, t-\tau_3)]d\xi \right)dxdt
=0
\end{aligned}
\]
for any $\varphi \in C^1(\overline{Q}_T)$ such that $\varphi (x,T)=
\varphi(x,0)$ for any $x\in\Omega$ and $\varphi(x,t)=0$ for any
$(x,t)\in \partial\Omega \times [0,T]$.
\end{Definition}

To deal with the existence of $T$-periodic solutions $(\eu  , \ev )$
of system \eqref{2}, with $\eu  , \ev   \ge 0$ in $Q_T$,  we
introduce, for any $\varepsilon >0,$ the map $G_{\varepsilon }:
[0,1]\times L^{\infty}(Q_T) \times L^{\infty}(Q_T) \rightarrow X$,
where $X:=L^p(0,T; W^{1,p}_0(\Omega)\cap L^2(\Omega)) \times
L^q(0,T; W^{1,q}_0(\Omega)\cap L^2(\Omega))$, as follows:
\[(\sigma, f,g) \mapsto (\eu  , \ev   )=G_{\varepsilon }( \sigma, f, g)\] if and only if $(\eu  ,\ev   )$ solves
the following uncoupled problem
\begin{equation}\label{3}
\begin{cases}
\dfrac{\partial u}{\partial t}- \varepsilon \div(|\nabla  u
|^{p-2}\nabla  u) -\div(|\nabla (\sigma u^m) |^{p-2}\nabla
(\sigma u^m)) = f, & \text {in}\quad  Q_T,\vspace{4pt}\\
\dfrac{\partial v}{\partial t}- \varepsilon  \div(|\nabla  v
|^{q-2}\nabla  v) -\div(|\nabla(\sigma v^n) |^{q-2}\nabla
(\sigma v^n))= g, & \text{in}\quad Q_T,\vspace{4pt}\\
u(\cdot,t)|_{\partial \Omega}= v(\cdot,t)|_{\partial \Omega}=0,  &\text{for a.a. }t\in (0, T),\\
u (\cdot,0)=u(\cdot,T)\text{ and }v(\cdot,0)= v(\cdot,T).
\end{cases}
\end{equation}
For any fixed $\sigma \in [0,1]$ the operator $A: {\cal X}=L^p(0,T; W^{1,p}_0(\Omega)\cap L^2(\Omega))  \rightarrow {\cal X}'$,
$u \mapsto \varepsilon \div(|\nabla u |^{p-2}\nabla u) +\div(|\nabla
(\sigma u^m) |^{p-2}\nabla (\sigma u^m))$, is hemicontinuous,
strictly monotone (and hence pseudomonotone), coercive and bounded.
Thus, by \cite[Theorem 32.D]{z}, the map $G_{\varepsilon}$ is well
defined. Now, consider
\[
f(\alpha,\beta):= \left(a-\int_{\Omega}K_1(\xi,\cdot)\,\alpha^2(\xi,
\cdot-\tau_1)d\xi+\int_{\Omega}K_2(\xi,\cdot)\,\beta^2(\xi,
\cdot-\tau_2)d\xi\right)\alpha^{p-1}
\]
and
\[
g(\alpha,\beta):=\left(b+\int_{\Omega}K_3(\xi,\cdot)\,\alpha^2(\xi,
\cdot-\tau_3)d\xi-\int_{\Omega}K_4(\xi,\cdot)\,\beta^2(\xi,
\cdot-\tau_4)d\xi\right)\beta^{q-1},
\]
where $\alpha \text{ and } \beta$ belong to  $L^{\infty}(Q_T)$.
Clearly, if the nonnegative functions $\eu  ,\ev   \in
L^{\infty}(Q_T)$ are such that $(\eu  ,\ev   )=G_{\varepsilon
}\big(1, f(\eu  ,\ev   ),g(\eu  ,\ev   )\big)$, then $(\eu ,\ev )$
is also a solution of \eqref{2} (with $\eu  $ and $\ev   \ge0$) in
$Q_T$. Hence, the existence of a nonnegative solution of \eqref{2}
is equivalent to the existence of a fixed point $(\alpha, \beta)$ of
the map $(\alpha, \beta)\mapsto G_{\varepsilon }\big( 1, f(\alpha,
\beta),g(\alpha, \beta)\big)$ with $\alpha$ and $\beta \ge0$.

Let $T_{\varepsilon}(\sigma, \alpha, \beta) := G_{\varepsilon
}\big(\sigma, f(\alpha, \beta),g(\alpha, \beta)\big).$
By \cite[Theorem 1.1]{i2} and \cite[Theorem 1.3]{i3} we have the
H\"{o}lder estimate in the interior of $Q_T$ of the solutions to
(\ref{3}). Moreover, the property of positive geometric density of
$\partial \Omega$, the fact that the Dirichlet data is H\"{o}lder
continuous and the periodicity condition ensure that one can obtain
the H\"{o}lder estimate up to the parabolic boundary of $Q_T$, see
the comments to \cite[Theorem 1.1]{i2} and \cite[Theorem 7.1]{i3}.

We will need the following result, which was proved in \cite[Lemma
2.1]{fnp1} for the $p,q=2$, $m,n>1$, but whose proof is the same, so
we omit it.
\begin{Lemma}\label{compact}
Let $ (\alpha, \beta) \in L^{\infty}(Q_T) \times L^{\infty}(Q_T)$
and let $\varepsilon >0$. Then $T_{\varepsilon}:[0,1]\times
L^{\infty}(Q_T)\times L^{\infty}(Q_T)\rightarrow
L^{\infty}(Q_T)\times L^{\infty}(Q_T)$, $(\sigma, \alpha,
\beta)\mapsto T_{\varepsilon}(\sigma, \alpha, \beta)=(\eu ,\ev )$ is
compact. Moreover  $\eu  , \ev   \in C(\overline {Q}_T)$.
\end{Lemma}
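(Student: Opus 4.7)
The plan is to factor $T_{\varepsilon}$ as the composition $(\sigma,\alpha,\beta)\mapsto(\sigma,f(\alpha,\beta),g(\alpha,\beta))\mapsto G_{\varepsilon}(\sigma,f(\alpha,\beta),g(\alpha,\beta))$. The first map is continuous from $[0,1]\times L^{\infty}(Q_T)\times L^{\infty}(Q_T)$ to itself, and it sends bounded sets to bounded sets, since $f(\alpha,\beta)$ and $g(\alpha,\beta)$ admit pointwise bounds in terms of $\|a\|_{\infty}$, $\|b\|_{\infty}$, $\|K_i\|_{\infty}$, $|\Omega|$ and the $L^{\infty}$-norms of $\alpha$ and $\beta$. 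Hence it suffices to show that $G_{\varepsilon}$ is continuous from $[0,1]\times L^{\infty}(Q_T)\times L^{\infty}(Q_T)$ to $L^{\infty}(Q_T)\times L^{\infty}(Q_T)$ and maps bounded sets to relatively compact sets.

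Take a bounded sequence $(\sigma_k,f_k,g_k)$ and set $(u_k,v_k):=G_{\varepsilon}(\sigma_k,f_k,g_k)$, which is well defined by \cite[Theorem 32.D]{z}. Because $\varepsilon>0$, the diffusive part of the first equation in \eqref{3} stays uniformly elliptic: the term $\varepsilon|\nabla u_k|^p$ alone already gives coercivity from below. Testing that equation against $u_k$ and exploiting the $T$-periodicity to cancel the time-derivative term yields a uniform bound of $u_k$ in $L^p(0,T;W^{1,p}_0(\Omega))$, and symmetrically for $v_k$ in $L^q(0,T;W^{1,q}_0(\Omega))$. Starting from these energy bounds and using that $f_k,g_k$ are bounded in $L^{\infty}$, a Moser-type iteration in the singular range $1<p,q<2$, carried out in detail in Lemma~\ref{limitatezza}, upgrades the estimates to uniform $L^{\infty}(Q_T)$-bounds on $(u_k,v_k)$, with constants depending on $\varepsilon$, on $\|f_k\|_{\infty},\|g_k\|_{\infty}$ and on the structural data, but not on $k$.

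With such $L^{\infty}$-control at hand, the interior H\"{o}lder estimates of \cite[Theorem 1.1]{i2} and \cite[Theorem 1.3]{i3} apply to each equation in \eqref{3}, the nondegeneracy provided by the $\varepsilon$-term being essential. The property of positive geometric density of $\partial\Omega$, the zero Dirichlet datum and the $T$-periodicity (which plays the role of continuity of the initial trace, since $u_k(\cdot,0)=u_k(\cdot,T)$ and $v_k(\cdot,0)=v_k(\cdot,T)$) allow us to propagate the estimate up to the parabolic boundary, as noted in the comments to \cite[Theorem 1.1]{i2} and \cite[Theorem 7.1]{i3}. Thus $(u_k,v_k)$ is uniformly equicontinuous on $\overline{Q}_T$, so Ascoli--Arzel\`a produces a subsequence converging uniformly on $\overline{Q}_T$, and a fortiori in $L^{\infty}(Q_T)\times L^{\infty}(Q_T)$; in particular $u_{\varepsilon},v_{\varepsilon}\in C(\overline{Q}_T)$.

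For the continuity of $T_{\varepsilon}$, one now lets $(\sigma_k,f_k,g_k)\to(\sigma,f,g)$ in $[0,1]\times L^{\infty}\times L^{\infty}$: by the previous step, up to a subsequence $(u_k,v_k)\to(u^\ast,v^\ast)$ uniformly, and passing to the limit in the weak formulation of \eqref{3} (the monotone terms are handled by a standard Minty-type argument, using the strict monotonicity that was already invoked to apply \cite[Theorem 32.D]{z}) identifies $(u^\ast,v^\ast)=G_{\varepsilon}(\sigma,f,g)$; uniqueness of the solution to \eqref{3} forces the full sequence to converge. The main obstacle with respect to the $p=q=2$ case of \cite[Lemma 2.1]{fnp1} lies precisely in the passage from the $L^p$-energy estimate to the uniform $L^{\infty}$-bound in the singular regime $1<p,q<2$, which is the reason for appealing to the adapted Moser iteration of Lemma~\ref{limitatezza}. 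Apart from this delicate step, the proof follows the scheme of \cite[Lemma 2.1]{fnp1}.
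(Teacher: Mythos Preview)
The paper gives no proof of this lemma: it simply says that the argument ``is the same'' as that of \cite[Lemma~2.1]{fnp1} and omits it. Your sketch follows the correct scheme --- energy bounds, $L^\infty$-bounds, H\"older regularity via \cite{i2,i3}, Ascoli--Arzel\`a, and a Minty-type identification of the limit --- and this is indeed the argument one expects from \cite{fnp1}.

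One point should be corrected, however. You obtain the uniform $L^\infty$-bound on solutions of \eqref{3} by invoking Lemma~\ref{limitatezza}, and present this step as ``the main obstacle with respect to the $p=q=2$ case''. This is not quite right. Lemma~\ref{limitatezza} is stated for \emph{non-negative} $u$ satisfying an inequality with right-hand side $Ku^{p-1}$ and with $\sigma=1$; none of this holds for \eqref{3} with an arbitrary $f\in L^\infty(Q_T)$ and $\sigma\in[0,1]$, so that lemma does not apply as stated. More to the point, no new difficulty arises here: since $\varepsilon>0$ is \emph{fixed} in Lemma~\ref{compact}, testing \eqref{3} against $|u_k|^{s}u_k$ already gives the coercive term $\varepsilon(s+1)\int |u_k|^{s}|\nabla u_k|^{p}$ (the $\sigma$-term only contributes non-negatively), and the Moser iteration with bounded source then proceeds exactly as for the singular $p$-Laplacian, with constants depending on $\varepsilon$. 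This is precisely why the paper can assert that the proof ``is the same'' as in \cite{fnp1}; apart from this overcomplication, your proposal matches that scheme.
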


Our aim is to prove the existence of $T$-periodic solutions $\eu  ,
\ev    \in C(\overline{Q}_T)$ of problem \eqref{2} with  $\eu  ,$
$\ev $ $>0$ in $Q_T$, for all $\varepsilon>0$ small enough, as
positive fixed points of the map $(\alpha, \beta)\mapsto
T_{\varepsilon}(1, \alpha, \beta)$. As a first step we prove the
following result.

\begin{Proposition}\label{positivity}
If the non-trivial pair $(\eu ,\ev )$ solves
\begin{equation}\label{xcorrezione}
(u,v)=G_{\varepsilon} \big(\sigma,  f(u^+,v^+)+(1-\sigma),
g(u^+,v^+)+(1-\sigma)\big)
\end{equation}
for some $\sigma\in[0,1]$, then
\[
\eu  (x,t)\ge 0 \text{ and }\ev   (x,t)\ge 0\quad\mbox{for any
}(x,t)\in Q_T.
\]
Moreover, if $\eu   \neq 0$ or $\ev    \neq 0$, then $\eu  >0$ or
$\ev   >0$ in $Q_T$, respectively.
\end{Proposition}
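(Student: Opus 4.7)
The plan is to split the proof of Proposition~\ref{positivity} into two independent parts: first establish $\eu,\ev\ge 0$, then, if either component is not identically zero, upgrade this to strict positivity on that component alone.

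For the sign statement, the natural approach is to test the weak formulation of the $\eu$-equation in \eqref{xcorrezione} against the function $\min(\eu,0)=-\eu^-$, where $\eu^-:=\max(-\eu,0)$; since $\eu\in L^p(0,T;W^{1,p}_0(\Omega))\cap C(\overline{Q}_T)$, a density argument makes this choice admissible in place of the smooth $\varphi$ of Definition~\ref{soldebole}. Three observations then drive the calculation. First, by the chain rule the time-derivative contribution equals
\[
\iint_{Q_T}\partial_t \eu\cdot(-\eu^-)\,dx\,dt=\tfrac{1}{2}\bigl(\|\eu^-(\cdot,T)\|_{L^2(\Omega)}^2-\|\eu^-(\cdot,0)\|_{L^2(\Omega)}^2\bigr)=0
\]
by the periodicity condition. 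Second, on $\{\eu<0\}$ one has $\nabla(-\eu^-)=\nabla \eu$ and $\nabla \eu^m=m|\eu|^{m-1}\nabla \eu$, so the two divergence terms together produce
\[
\varepsilon\iint_{\{\eu<0\}}|\nabla \eu|^p\,dx\,dt+\sigma^{p-1}m^{p-1}\iint_{\{\eu<0\}}|\eu|^{(m-1)(p-1)}|\nabla \eu|^p\,dx\,dt\ge 0.
\]
Third, the source $f(\eu^+,\ev^+)+(1-\sigma)$ paired against $-\eu^-\le 0$ is nonpositive: $f(\eu^+,\ev^+)$ carries a factor $(\eu^+)^{p-1}$ that vanishes on $\{\eu<0\}$, while $(1-\sigma)(-\eu^-)\le 0$. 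Combining, the $\varepsilon$-summand alone forces $\nabla \eu^-=0$ a.e.; the homogeneous Dirichlet trace of $\eu^-$ and Poincaré's inequality then give $\eu^-\equiv 0$, hence $\eu\ge 0$ in $Q_T$. The identical computation yields $\ev\ge 0$.

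For strict positivity, assume $\eu\not\equiv 0$. Using $\eu,\ev\ge 0$, I would rewrite the $\eu$-equation of \eqref{xcorrezione} in the expanded form
\[
\partial_t \eu-\div\bigl([\varepsilon+\sigma^{p-1}m^{p-1}\eu^{(m-1)(p-1)}]|\nabla \eu|^{p-2}\nabla \eu\bigr)=F_\varepsilon(x,t),
\]
where $F_\varepsilon\ge-M\eu^{p-1}$ with $M$ depending only on $\|a\|_\infty$, $\|K_i\|_\infty$, $\|\eu\|_\infty$, $\|\ev\|_\infty$. Hence $\eu$ is a continuous nonnegative weak supersolution of a quasilinear parabolic equation whose leading part is uniformly nondegenerate thanks to the $\varepsilon$-summand and fits the structural framework of \cite{i2, i3}. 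Continuity and non-triviality produce a point $(x^*,t^*)$ where $\eu>0$; invoking the intrinsic weak Harnack inequality / strong maximum principle for such singular parabolic operators (see e.g.\ \cite{dbgv}) propagates positivity forward in a parabolic cylinder, and then the $T$-periodicity in time lets us bridge across $t=0\equiv T$ and cover all of $Q_T$. The analogous argument handles $\ev$.

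The main technical obstacle is the strict positivity step. For $1<p<2$ the Harnack principle for the $p$-parabolic operator is of intrinsic type, and one must verify that the extra coefficient $\eu^{(m-1)(p-1)}$ (which vanishes precisely on $\{\eu=0\}$) does not ruin the propagation: the rescue is exactly the nondegenerate summand $\varepsilon|\nabla \eu|^{p-2}\nabla \eu$, which is the structural reason for introducing the regularized system \eqref{2} in the first place.
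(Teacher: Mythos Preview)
Your proof is correct and follows essentially the same strategy as the paper: for nonnegativity, both test the equation against the negative part of $\eu$ (the paper via Steklov averages, you via density), exploit that the source vanishes on $\{\eu<0\}$ except for the nonnegative term $(1-\sigma)$, and conclude by Poincar\'e. For strict positivity, the paper simply cites \cite{BD} and \cite{K} for the propagation-of-positivity result, whereas you sketch the Harnack-based mechanism from \cite{dbgv}; this is a valid alternative since the $\varepsilon$-summand makes the principal part uniformly nondegenerate and the lower-order term satisfies $F_\varepsilon\ge -M\eu^{p-1}$, placing the equation within the structural framework required.
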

\begin{proof}
Assume that $(\eu  , \ev   )$ solves \eqref{xcorrezione} with $\eu
\neq 0$ for some $\sigma\in[0,1]$. We first prove that $\eu   \ge0$.
Multiplying the first equation of \eqref{3}, where $f(\alpha,
\beta)$ is replaced by $ f(\eu  ^+, \ev ^+)+ (1- \sigma)$, by $\eu
^-:=\mbox{min}\{0, \eu  \}$, integrating on $Q_T$ and passing to the
limit in the Steklov averages $(\eu  )_h\in H^1(Q_{T- \delta})$,
$\delta,h>0$, in a standard way \cite[p.85]{lsu}, we obtain
\[
 \varepsilon \Int |\nabla \eu|^{p-2} \nabla \eu\nabla \eu ^-dxdt + \Int|\nabla(\sigma \eu^m)|^{p-2}\nabla
(\sigma \eu^m)\nabla \eu ^- dxdt=\Int  (1- \sigma)\eu ^-dxdt \le 0,
\]
by the $T$-periodicity of $\eu  $ and taking into account that $\eu
^+\,\eu  ^-=0$. Hence we obtain
\[
\varepsilon\Int |\nabla \eu  ^-|^pdxdt \le\varepsilon \Int |\nabla
\eu|^{p-2} \nabla u\nabla \eu ^- dxdt+ \Int|\nabla(\sigma
\eu^m)|^{p-2}\nabla (\sigma \eu^m)\nabla \eu ^- dxdt \le 0.
\]
Thus
\[
\Int |\nabla \eu  ^-|^p dxdt= 0.
\]
The Poincar\'{e} inequality gives
\[
0 \le \int_{\Omega}|\eu  ^-|^p dx \le \frac{1}{\mu_p}
\int_{\Omega}|\nabla \eu ^-|^p dx \quad \mbox{for all }t,
\]
where $\mu_p$ is the first positive eigenvalue of the problem
\[
\begin{cases}
- \diver(|\nabla z|^{p-2}\nabla z) = \mu |z|^{p-2}z, & x \in \Omega,\\
z =0, &x \in \partial \Omega,
\end{cases}
\]
(see, for example, \cite{kl}). Integrating over $(0,T)$, we have
\[
0 \le \Int |\eu  ^-|^p dxdt \le \frac{1}{\mu_p} \Int |\nabla \eu
^-|^p dxdt = 0,
\]
which, together with the boundary conditions and the fact that $\eu
^- \in C(\overline{Q}_T)$, implies $\eu  ^-(x,t)=0$ for all $(x,t)
\in Q_T$.

Now we prove that $\eu >0$ in $Q_T$. Since $\eu$ is non-trivial,
there exists $(x_0,t_0)\in \Omega \times (0,T]$ such that $\eu
(x_0,t_0) > 0$. Hence  $\eu(x,t)>0$ for all $(x,t) \in Q_T$ (see
\cite[p.3]{BD} and \cite{K}).  In the same way, one can prove that
$\ev \neq0$ implies $\ev   (x,t) >0$ for all $(x,t) \in Q_T$.
\end{proof}

The next lemma is crucial to prove Proposition \ref{R-moser} below.
\begin{Lemma}\label{limitatezza}
Let $K >0$ and assume that $u$ is a non-negative $T-$periodic
continuous function such that $x \mapsto u(x,t) \in
W_0^{1,p}(\Omega)$ for all $t\in[0,T]$ and which satisfies
\[
u_t-\varepsilon{\rm div}(|\nabla u |^{p-2}\nabla u)- {\rm
div}(|\nabla u^m |^{p-2}\nabla u^m) \le Ku^{p-1}\quad \text{in}
\quad Q_T.
\]
Then there exists $R >0$ such that
\[
\|u\|_{L^\infty(Q_T)}\le R \mbox{ for all $\varepsilon>0$}.
\]
\end{Lemma}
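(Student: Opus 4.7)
The plan is a Moser-type iteration adapted to the singular regime $1 < p < 2$, with a careful choice of test powers that exploits the doubly-nonlinear structure and keeps the bounds uniform in $\varepsilon$.

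First I would fix $k > 1$ and test the differential inequality with $u^{k-1}$, justified through Steklov averages. The time-derivative term $\int_{Q_T} u_t\, u^{k-1}\, dx\, dt = k^{-1}\int_{Q_T}(u^k)_t\, dx\, dt$ vanishes by $T$-periodicity, and the $\varepsilon$-term is non-negative and may be dropped (this is what keeps everything uniform in $\varepsilon$). Writing $\gamma = \gamma(k) := (k + (m-1)(p-1) + p - 2)/p$, the doubly-nonlinear term yields $m^{p-1}(k-1)\gamma^{-p}\int_{Q_T}|\nabla u^\gamma|^p$, so I obtain
\[
\frac{m^{p-1}(k-1)}{\gamma^p}\int_{Q_T}|\nabla u^\gamma|^p\, dx\, dt \le K\int_{Q_T} u^{k+p-2}\, dx\, dt.
\]

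To convert this gradient bound into a space-time integral bound through parabolic Sobolev, I also need an $L^\infty(0,T;L^k(\Omega))$ estimate on $u$. To this end I would pick $t_k \in [0,T]$ that minimises $t \mapsto \int_\Omega u(x,t)^k\, dx$, so that $\int_\Omega u(x,t_k)^k\, dx \le T^{-1}\int_{Q_T} u^k$. Testing on the interval $[t_k, s]$ (wrapping around $T$ by periodicity) and once more discarding the non-negative dissipation terms on the left yields
\[
\sup_{s \in [0,T]}\int_\Omega u(x,s)^k\, dx \le C_k\!\left(\int_{Q_T} u^k + \int_{Q_T} u^{k+p-2}\right).
\]
Applying the parabolic Gagliardo--Nirenberg inequality to $w := u^\gamma$ with the bounds from the previous two steps gives $w \in L^{p(N+k/\gamma)/N}(Q_T)$; translating back to $u$ this reads $u \in L^{k_1}(Q_T)$ with $k_1 = \gamma p + kp/N = k(1+p/N) + (m-1)(p-1) + p - 2$. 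Since $\gamma \asymp k/p$ for large $k$, the ratio $k_1/k$ is bounded below by $1 + p/N > 1$, a strict geometric gain. I would then iterate along $k_j$, starting from $k_0 = p$ (for which $\int_{Q_T} u^p$ is finite by Poincar\'e) and tracking the polynomial-in-$k$ dependence of all constants; the usual Moser bookkeeping $\sum_j (\log C_j)/k_j < \infty$ finally yields $\|u\|_{L^\infty(Q_T)} \le R$ independent of $\varepsilon$.

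The hard part is twofold. First, the prefactor $m^{p-1}(k-1)/\gamma^p \sim k^{1-p}$ in the first step decays with $k$ because $1 < p < 2$, so its reciprocal grows polynomially rather than being uniformly bounded; I would need to verify that this polynomial growth remains summable in the Moser bookkeeping, which it does. Second, the use of $u^{k-1}$ as a test function must be rigorously justified through Steklov averages, which is routine but tedious given that $u$ is only assumed continuous and $W^{1,p}_0$ in space. The positivity $(m-1)(p-1) > 0$ guaranteed by $m, p > 1$ supplies an additional additive exponent gain in the energy inequality, but the multiplicative iteration step $1 + p/N$ that drives the scheme comes from the Sobolev embedding exactly as in the classical ($p = 2$) Moser argument.
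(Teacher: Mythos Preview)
Your Moser scheme is a legitimate variant, but it differs from the paper's and has one real gap at the base step.

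\textbf{Comparison with the paper.} The paper does not use parabolic Gagliardo--Nirenberg at all. It keeps $t$ fixed, multiplies by $u^{s+1}$, and from the spatial Sobolev embedding $W^{1,p}_0(\Omega)\hookrightarrow L^s(\Omega)$ derives a differential inequality of the form
\[
\frac{d}{dt}\|u(t)\|_{L^{s+2}(\Omega)}^{s+2}\le(s+2)\|u(t)\|_{L^{s+2}(\Omega)}^{s+p}\Bigl(C_{|\Omega|}-M_s\|u(t)\|_{L^{s+2}(\Omega)}^{(m-1)(p-1)}\Bigr).
\]
It then invokes an elementary ODE lemma: if $f>0$ is $T$-periodic and $f'\le f^{s}(\beta-\gamma f^{\alpha})$, then $f^{\alpha}\le\beta/\gamma$ everywhere. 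This single device delivers both the $L^{\infty}_t L^{s_k+2}_x$ bound at every iteration step and the uniform starting bound $x_{k_0}\le C$, without ever integrating in $t$. The resulting iteration ratio is $\sim p$, not your $1+p/N$. Your space-time route is more in the spirit of DiBenedetto's parabolic Moser iteration; the paper's pointwise-in-time route is shorter here because periodicity makes the ODE lemma available.

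\textbf{The gap.} Your justification of the base case is not sufficient. Saying ``$\int_{Q_T}u^{p}$ is finite by Poincar\'e'' only uses $u(\cdot,t)\in W^{1,p}_0(\Omega)$; it gives no bound that is uniform in $\varepsilon$ or in the particular solution $u$, which is exactly what the lemma asserts. As written, your iteration yields $\|u\|_{L^\infty}\le C\bigl(\|u\|_{L^{k_0}}\bigr)$ with the right-hand side uncontrolled. The fix is that your first energy inequality is self-closing: integrating over $Q_T$ and applying Poincar\'e to $u^{\gamma}$ gives
\[
\int_{Q_T}u^{\gamma p}\,dx\,dt\le C_k\int_{Q_T}u^{k+p-2}\,dx\,dt,
\]
and since $\gamma p-(k+p-2)=(m-1)(p-1)>0$ the right-hand side is absorbed by Young's inequality, producing an $\varepsilon$- and $u$-free bound on $\int_{Q_T}u^{\gamma p}$. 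This plays the role that the ODE lemma plays in the paper's proof. Once you insert this step, the rest of your scheme (including the summability of the polynomially growing constants you correctly flagged) goes through.
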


\begin{proof} We follow Moser's technique to show the stated a priori bounds.
Multiplying
\[
u_t-\varepsilon\div(|\nabla u |^{p-2}\nabla u)- \div(|\nabla u^m
|^{p-2}\nabla u^m) \le Ku^{p-1}
\]
by $u^{s+1}$, with $s\ge 0$, integrating over $\Omega$ and passing
to the limit as $h\to 0$ in the Steklov averages $u_h\in H^1(Q_{T-
\delta})$, $\delta,h>0$, we have
\[
\frac{1}{s+2}\frac{d}{dt}\|u(t)\|_{L^{s+2}(\Omega)}^{s+2}+\int_{\Omega}(\varepsilon
|\nabla u|^{p-2} \nabla u +  |\nabla u^m|^{p-2} \nabla u^m )\nabla
u^{s+1}dx \le K\|u(t)\|_{L^{s+p}(\Omega)}^{s+p}\le C_{|\Omega|}
\|u(t)\|_{L^{s+2}(\Omega)}^{s+p},
\]
and thus
\[
\frac{d}{dt}\|u(t)\|_{L^{s+2}(\Omega)}^{s+2}
+(s+1)(s+2)m^{p-1}\int_{\Omega}u^{(m-1)(p-1)+s}|\nabla u|^pdx \le
(s+2)C_{|\Omega|} \|u(t)\|_{L^{s+2}(\Omega)}^{s+p}
\]
where $C_{|\Omega|}:=\sup_{s\ge 0}K |\Omega|^{1-\frac{s+p}{s+2}}$.
Since $m,p > 1$, it follows
\begin{equation}\label{lim2}
\begin{aligned}
&\frac{d}{dt}\|u(t)\|_{L^{s+2}(\Omega)}^{s+2} +
\frac{s+2}{[m(p-1)+s+1]^p} \int_\Omega
\left|\nabla u^{\frac{m(p-1)+s+1}{p}}\right|^p dx \\
&\le\frac{d}{dt}\|u(t)\|_{L^{s+2}(\Omega)}^{s+2} +
(s+2)m^{p-1}\left(\frac{p}{m(p-1)+s+1}\right)^p \int_\Omega
\left|\nabla u^{\frac{m(p-1)+s+1}{p}}\right|^p dx \\
&\le \frac{d}{dt}\|u(t)\|_{L^{s+2}(\Omega)}^{s+2} +
(s+1)(s+2)m^{p-1}\int_{\Omega}u^{(m-1)(p-1)+s}|\nabla u|^pdx\\
&\le C_{|\Omega|}(s+2)\|u(t)\|_{L^{s+2}(\Omega)}^{s+p}.
\end{aligned}
\end{equation}
For $\varepsilon$ fixed and $k=1,2,\dots$, setting
\[
s_k:=2p^k +\frac{p^{k}-p}{p-1}+m-1,\quad
\alpha_k:=\frac{p(s_k+2)}{m(p-1)+ s_k +1},\quad
w_k:=u^{\frac{m(p-1)+s_k+1}{p}},
\]
we obtain by \eqref{lim2}
\begin{equation}\label{lim3}
\frac{d}{dt}\|w_k(t)\|_{{L^{\alpha_k}}(\Omega)}^{\alpha_k}
+\frac{(s_k+2)}{[m(p-1)+s_k+1]^p}\|\nabla w_k(t)\|^p_{L^p(\Omega)}
\le
C_{|\Omega|}(s_k+2)\|w_k(t)\|_{{L^{\alpha_k}}(\Omega)}^{\alpha_k\frac{s_k+p}{s_k+2}}.
\end{equation}
Now, let us fix $s>p$ such that the continuous Sobolev immersion
$W_0^{1,p}(\Omega)\subset L^s(\Omega)$ holds and observe that since
$s_k \rightarrow +\infty$, as $k \rightarrow +\infty$, there exists
$k_0$ such that $\alpha_k \in (1, s)$ for all $k \ge k_0$. By the
interpolation and the Sobolev inequalities, it results
\[
\|w_{k}(t)\|_{L^{\alpha_{k}}(\Omega)} \le
\|w_k(t)\|_{L^1(\Omega)}^{\theta_k}\| w_k(t)\|_{L^s(\Omega)}^{1-
\theta_k}\le C \|w_k(t)\|_{L^1(\Omega)}^{ \theta_k}\|\nabla
w_k(t)\|_{L^p(\Omega)}^{1- \theta_k}
\]
for all $k \ge k_0$. Here $\theta_k=(s-\alpha_k)/[\alpha_k(s-1)]$
and $C$ is a positive constant. Using the fact that
$\|w_k(t)\|_{L^1(\Omega)}=\|w_{k-1}(t)\|_{L^{\alpha_{k-1}}(\Omega)}^{\alpha_{k-1}}$
and defining
$x_{k}:=\sup_{t\in\R}\|w_{k}(t)\|_{L^{\alpha_{k}}(\Omega)}$, one has
\[
\begin{aligned}
\|w_{k}(t)\|_{L^{\alpha_{k}}(\Omega)}^{\frac{p}{1- \theta_k}}\le
&C\|w_{k-1}(t)\|_{L^{\alpha_{k-1}}(\Omega)}^{p\alpha_{k-1}\frac{\theta_k}{1-\theta_k}}
\|\nabla w_k(t)\|_{L^p(\Omega)}^p\\
\le &Cx_{{k-1}}^{p\alpha_{k-1}\frac{\theta_k}{1- \theta_k}} \|\nabla
w_k(t)\|_{L^p(\Omega)}^p
\end{aligned}
\]
for all $k \ge k_0$. Thus, by \eqref{lim3},
\begin{equation}\label{lim4}
\begin{aligned}
\frac{d}{dt}\|w_k(t)\|_{{L^{\alpha_k}}(\Omega)}^{\alpha_k} &\le
C_{|\Omega|}(s_k+2)\|w_k(t)\|_{{L^{\alpha_k}}(\Omega)}^{\alpha_k\frac{s_k+p}{s_k+2}}\\
&-\frac{s_k+2}{C[m(p-1)+s_k+1]^p}
\|w_k(t)\|_{L^{\alpha_k}(\Omega)}^{\frac{p}{1- \theta_k}}\,
 x_{{k-1}}^{p\alpha_{k-1}\frac{\theta_k}{\theta_k-1}}\\
=&\left(C_{|\Omega|}-\frac{1}{C[m(p-1)+s_k+1]^p}\|w_k(t)\|_{L^{\alpha_k}(\Omega)}^{\frac{p}{1-\theta_k}-\alpha_k\frac{s_k+p}{s_k+2}}
 x_{{k-1}}^{p\alpha_{k-1}\frac{\theta_k}{\theta_k-1}}\right)(s_k+2)\|w_k(t)\|_{{L^{\alpha_k}}(\Omega)}^{\alpha_k\frac{s_k+p}{s_k+2}}
\end{aligned}
\end{equation}
for all $k \ge k_0$. By Lemma \ref{appendice} and \eqref{lim4}, one
has
\begin{equation}\label{lim5}
\|w_k(t)\|_{L^{\alpha_k}(\Omega)}\le
\left(C_{|\Omega|}M_kx_{k-1}^{p\alpha_{k-1}\frac{\theta_k}{1-\theta_k}}\right)^{\eta_k}
\end{equation}
for all $k \ge k_0$, where
$\eta_k:=\frac{(1-\theta_k)(s_k+2)}{p(s_k+2)-\alpha_k(s_k+p)(1-\theta_k)}$
and $M_k:=C[m(p-1)+s_k+1]^p$. By definition of $x_k$ and
\eqref{lim5}, we get
\[
x_k \le \left(C_{|\Omega|}M_k\right)^{\eta_k} x_{k-1}^{\nu_k}
\]
for all $k \ge k_0$, with
$\nu_k:=p\alpha_{k-1}\theta_k(s_k+2)/[p(s_k+2)-\alpha_k (s_k+p)
(1-\theta_k)]$.

If $x_{k_n} \le 1$ along a sequence $k_n\to+\infty$, then one has
$\|u\|_{L^\infty(Q_T)}\le 1$ by the very definition of $x_k$ and the
lemma is proved. On the other hand, assume $x_{k}>1$ for all $k \ge
k_0$ and observe that there exists $\overline {k}_0$ such that, for
all $k\ge\overline{k}_0$,
\[
\eta_k\le 1/(p\theta) \qquad \text{and} \qquad \nu_k\le
\alpha_{k-1}.
\]
Here $\theta:=(s-p)/[p(s-1)]$. Without loss of generality, assume
$k_0=\max\{\overline{k}_0,k_0\}$. Then, there exists a positive
constant $A$ such that
\[
\begin{aligned}
x_k\le &\left(C_{|\Omega|}C\right)^{\eta_k}[m(p-1)+s_k+1]^{p\eta_k}x_{k-1}^{\nu_k}\\
\le &\left(C_{|\Omega|}C\right)^{\eta_k}
 \left(mp+\frac{2p^{k+1}}{p-1}\right)^{p\eta_k}x_{k-1}^{\nu_k}
\le Ap^{\frac{k+1}{\theta}}x_{k-1}^{\alpha_{k-1}}
\end{aligned}
\]
for all $k \ge k_0$. Thus
\begin{equation}\label{estimate}
\begin{aligned}
\log x_k \le &\log A+\frac{k+1}{\theta}\log p + \alpha_{k-1} \log x_{k-1} \\
\le &\left(1+\sum_{i=1}^{k-k_0-1}
\prod_{j=1}^{i}\alpha_{k-j}\right)\log A +
\left(k+1+\sum_{i=k_0+2}^{k}i\prod_{j=1}^{k+1-i}\alpha_{k-j}\right)\frac{\log
p}{\theta} + \left(\prod_{j=1}^{k-k_0}\alpha_{k-j}\right)\log
x_{k_0}.
\end{aligned}
\end{equation}
 Since
\[
\alpha_k = 1 + \dfrac{1 - m (p - 1)}{m (p - 1) +
s_k +1} \le 1 + \dfrac{|1 - m (p - 1)|}{2 p^k}\,,
\]
for $i \le k$ we have that\[
\begin{aligned}
\log \left(\dfrac{1}{p^i} \prod_{j=1}^i \alpha_{k - j}\right) & = \sum_{j=1}^{i} \log \dfrac{\alpha_{k - j}}{p} \\
& \le \sum_{j=1}^{i} \log \left(1 + \dfrac{|1 - m (p - 1)|}{2 p^{k - j + 1}}\right) \\
& \le \dfrac{|1 - m (p - 1)|}{2 p^k} \sum_{j=1}^i p^{j - 1} \\
& \le \dfrac{|1 - m (p - 1)|}{2 (p - 1)}\,,
\end{aligned}
\]
which means that
\[
\prod_{j=1}^i \alpha_{k - j} \le M p^i \qquad \text{with }M = \exp
\dfrac{|1 - m (p - 1)|}{2 (p - 1)}\,.
\]
 Therefore, from \eqref{estimate} we obtain
\begin{equation}\label{estimatebis}
\begin{aligned}
\dfrac{\log x_k}{M} & \le \log A \sum_{i=0}^{k - k_0 - 1} p^i + \dfrac{\log p}{\theta} \sum_{i=k_0 +2}^{k + 1} i p^{k + 1 - i} + p^{k - k_0} \log x_{k_0} \\
&\le \frac{\log p}{\theta}\frac{p^{k-k_0}}{(p-1)^2}
 \left[k_0(p-1)+2p-1\right]+\log A \frac{1-p^{k-k_0}}{1-p}+ p^{k-k_0}\log x_{k_0}.
\end{aligned}
\end{equation}
In fact, taking $x= \frac{1}{p}$ in $\displaystyle x\frac{d}{dx}\sum
_{i=0}^{k+1} x^i = x  \frac{d}{dx}\left( \frac{1-
x^{k+2}}{1-x}\right), $ it results
\[
\begin{aligned}
\sum_{i=k_0+2}^{k+1} ip^{k+1-i}  &= \frac{p^{k+3}}{(p-1)^2}
\left[\frac{1}{p^{k+2}} \left(\frac{k+1}{p}-k-2\right) -
\frac{1}{p^{k_0+2}} \left(\frac{k_0+1}{p}-k_0-2\right)\right]
\\&\le\frac{p^{k+3}}{(p-1)^2}\frac{1}{p^{k_0+2}}
\left(k_0+2-\frac{k_0+1}{p}\right) =\frac{p^{k-k_0}}{(p-1)^2}
\left[k_0(p-1)+2p-1\right].
\end{aligned}
\]
Then, by \eqref{estimatebis}, it follows
\[
x_k^{1/M} \le A^{\frac{1-p^{k-k_0}}{1-p}}
p^{\frac{p^{k-k_0}}{\theta(p-1)^2}
 \left[k_0(p-1)+2p-1\right]}
x_{k_0}^{p^{k-k_0}}.
\]
Since
$x_{k}=\sup_{t\in\R}\|u(t)\|_{s_{k}+2}^{\frac{m(p-1)+s_{k}+1}{p}}$,
we obtain
\[
\begin{aligned}
\|u(t)\|_{L^\infty(\Omega)} \le
&\lim_{k \rightarrow\infty}\|u(t)\|_{s_k + 2}\\
\le &\limsup_{k
\rightarrow\infty}\left\{A^{\frac{p}{m(p-1)+s_{k}+1}\frac{1-p^{k-k_0}}{1-p}}
x_{k_0}^{\frac{p^{k-k_0+1}}{m(p-1)+s_{k}+1}}
p^{\frac{p^{k-k_0+1}(k_0(p-1)+2p-1)}
{\theta(p-1)^2(m(p-1)+s_{k}+1)}
 }
\right\}^M\\&=:R \quad \forall \; t \in \R,
\end{aligned}
\]
where $R$ is a positive constant. Hence $ \sup_{t \in \R}
\|u(t)\|_{L^\infty(\Omega)} \le R. $ It remains to prove that $R$ is
independent of $\varepsilon$ as claimed. To this aim it is
sufficient to prove that there exists $C>0$, independent of
$\varepsilon$, such that $x_{k_0} \le C$. In fact, by \eqref{lim3}
with $s_0:= s_{k_0}$, it follows
\begin{equation}\label{lim6}
\frac{d}{dt}\|u(t)\|_{{L^{s_0+2}}(\Omega)}^{s_0+2}
\!+\frac{s_0+2}{[m(p-1)+s_0+1]^p}\!\!\int_\Omega \!\left|\nabla
u^{\frac{m(p-1)+s_0+1}{p}}\right|^p dx\!\!\le\!
C_{|\Omega|}(s_0+2)\|u(t)\|_{{L^{s_0+2}}(\Omega)}^{s_0+p}\!.
\end{equation}
 Moreover, we have
\[
\|u(t)\|_{{L^{s_0+2}}(\Omega)}^{m(p-1)+s_0+1} \le C_1 \left[
\int_{\Omega}
\left(u^{\frac{m(p-1)+s_0+1}{p}}\right)^{p\frac{s_0+2}{s_0+p}}dx\right]^{\frac{s_0+p}{s_0+2}}
\]
by H\"{o}lder's inequality  with $r=\frac{m(p-1)+s_0+1}{s_0+p}$.
Now, without loss of generality, we can assume that $k_0$ is chosen
large enough such that the continuous Sobolev immersion
$W^{1,p}_0(\Omega) \subset L^{p\frac{s_0+2}{s_0+p}}(\Omega)$ holds
and, hence, we deduce that
\[
\|u(t)\|_{{L^{s_0+2}}(\Omega)}^{m(p-1)+s_0+1} \le C_2 \|\nabla
u^{\frac{m(p-1)+s_0+1}{p}}\|_{{L^p}(\Omega)}^{p},
\]
for a positive constant $C_2$.  Thus, using \eqref{lim6}, one has
\[
\begin{aligned}
&\frac{d}{dt}\|u(t)\|_{{L^{s_0+2}}(\Omega)}^{s_0+2}
+\frac{s_0+2}{C_2[m(p-1)+s_0+1]^p}\|u(t)\|_{{L^{s_0+2}}(\Omega)}^{m(p-1)+s_0+1}
\\&\le  \frac{d}{dt}\|u(t)\|_{{L^{s_0+2}}(\Omega)}^{s_0+2}
+\frac{s_0+2}{[m(p-1)+s_0+1]^p}\|\nabla
u^{\frac{m(p-1)+s_0+1}{p}}\|_{{L^p}(\Omega)}^{p} \\&\le
C_{|\Omega|}(s_0+2)\|u(t)\|_{{L^{s_0+2}}(\Omega)}^{s_0+p}.
\end{aligned}
\]
Hence
\[
\frac{d}{dt}\|u(t)\|_{{L^{s_0+2}}(\Omega)}^{s_0+2}
\le\|u(t)\|_{{L^{s_0+2}}(\Omega)}^{s_0+p} \left(C_{|\Omega|}(s_0+2)
-M\|u(t)\|_{{L^{s_0+2}}(\Omega)}^{(m-1)(p-1)} \right),
\]
where $M:= \frac{s_0+2}{C_2[m(p-1)+s_0+1]^p}$. This inequality and
Lemma \ref{appendice} below imply that
\[
\|u(t)\|_{{L^{s_0+2}}(\Omega)} \le \{C_2C_{|\Omega|}[m(p-1)+s_0+1]^p\}^{\frac{1}{{(m-1)(p-1)}}} \quad \forall t \in \R.
\]
Thus, there exists $C>0$, independent of $\varepsilon$, such that
$x_{k_0}=\sup_{t\in\R}\|u(t)\|_{L^{s_{0}+2}(\Omega)}^{\frac{m(p-1)+s_{0}+1}{p}}
\le C$, and this concludes the proof.
\end{proof}
\begin{Lemma}\label{appendice}
Let $f: \R \rightarrow (0,+\infty)$ be a differentiable and $T$-periodic function; suppose that there exist positive constants
$s, \alpha, \beta, \gamma$ such that
\[
f'(t) \le f^s(t)(\beta-\gamma f^\alpha(t)),
\]
for all $t\in \R$. Then $\beta -\gamma f^\alpha(t) \ge 0$ for all $t
\in \R$.
\end{Lemma}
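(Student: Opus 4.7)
The plan is to exploit the $T$-periodicity of $f$ via a maximum-point argument. Since $f$ is continuous and $T$-periodic, it attains its supremum at some point $t^{\ast}\in[0,T]$; by differentiability this yields the first-order condition $f'(t^{\ast})=0$. Plugging $t=t^{\ast}$ into the differential inequality gives
\[
0=f'(t^{\ast})\le f^{s}(t^{\ast})\bigl(\beta-\gamma f^{\alpha}(t^{\ast})\bigr),
\]
and since $f>0$ forces $f^{s}(t^{\ast})>0$, we may divide to obtain $\beta-\gamma f^{\alpha}(t^{\ast})\ge0$, i.e.\ $f(t^{\ast})^{\alpha}\le\beta/\gamma$.

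From here the conclusion is immediate: because $t^{\ast}$ realizes the maximum of $f$ on $\R$, one has $f(t)\le f(t^{\ast})\le(\beta/\gamma)^{1/\alpha}$ for every $t\in\R$, and therefore $\beta-\gamma f^{\alpha}(t)\ge0$ throughout. Equivalently, one can argue by contradiction: if the conclusion failed at some $t_{0}$, then $f(t_{0})^{\alpha}>\beta/\gamma$, so $f(t^{\ast})^{\alpha}\ge f(t_{0})^{\alpha}>\beta/\gamma$, contradicting the inequality just derived at the maximum. There is no real obstacle in this proof; the only technical point worth flagging is that the assumption $f>0$ is used to cancel the factor $f^{s}(t^{\ast})$, and that the existence of a global maximum relies crucially on the periodicity (without it the supremum might only be approached at infinity, and the argument would break).
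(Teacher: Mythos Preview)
Your proof is correct and follows essentially the same approach as the paper: both arguments pick a point $t^{\ast}$ where the periodic function $f$ attains its maximum, use $f'(t^{\ast})=0$ together with $f^{s}(t^{\ast})>0$ to deduce $\beta-\gamma f^{\alpha}(t^{\ast})\ge 0$, and then conclude for all $t$ by the monotonicity of $x\mapsto\beta-\gamma x^{\alpha}$.
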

\begin{proof}
By the periodicity and continuity of $f$, let $ t_{0} $ be any point in which $ f $ attains its maximum.
Then we have:
\[
\beta - \gamma f^{\alpha}( t ) \ge \beta - \gamma f^{\alpha}( t_{0} ) \ge \frac{ f'( t_{0} ) }{ f^{s}( t_{0} ) } = 0 \qquad \forall t\in\R. \qedhere
\]
\end{proof}
Next, we show that the map $I-G_{\varepsilon}:\{1\}\times L^\infty(Q_T)\times L^\infty(Q_T) \rightarrow L^\infty(Q_T)\times L^\infty(Q_T)$
has the Leray-Schauder topological degree different
from zero in the intersection of a sufficiently large ball centered
at the origin with the cone of non-negative functions.

From now on we make the following assumption:
\begin{Assumptions}\label{ipotesi1} There exist two positive constants $C_1,C_2$ such that
\begin{enumerate}
\item for all $\varepsilon>0$ and all  solution pairs $(\eu  ,\ev   )$ of
\begin{equation}\label{eq:rhovarepsilon}
(u,v)=G_{\varepsilon} \big(1, f(u^+,v^+), g(u^+,v^+)\big),
\end{equation}
it results
\begin{equation}\label{eq:ipotesimaggiorazioni} \|\eu
\|_{L^2(Q_T)}^2\le C_1\text{ and }\|\ev   \|_{L^2(Q_T)}^2\le C_2,
\end{equation}
\item
$ \displaystyle \min \left\{\dfrac{1}{T}\iint_{Q_T}a(x,t)e_p^p(x)dxdt-\dfrac{\uk_2C_2}{T},
\dfrac{1}{T}\iint_{Q_T}b(x,t)e_q^q(x)dxdt-\dfrac{\uk_3C_1}{T}\right\}>0,
$ where $\uk_2, \uk_3$ are as in Hypotheses \ref{ipotesi}.3, 
and, for any $ r > 1 $, $e_r$ stands for the positive eigenvector associated to the first eigenvalue $\mu_r$ of $-\Delta_r$ with Dirichlet boundary conditions and
normalized in such a way that $\|e_r\|_{L^r(\Omega)}=1$.
\end{enumerate}
\end{Assumptions}
\begin{rem}\label{rem:onaprioriboundsandcooperation}
The last assumptions, and in particular statement 2, will grant the periodic coexistence, that is the existence of a $T$-periodic solution couple $ ( u, v ) $ with
$ u, v $ both non-negative and non-trivial (see Theorem~\ref{thm:generale}).
Generally speaking, the \emph{explicit} knowledge of the a priori bounds $ C_{1}, C_{2} $ is required to check the validity
of statement 2 of Hypotheses \ref{ipotesi1}.
This is the task we will devote ourselves to in the next Section.
A notable exception is the case in which $ K_{2}, K_{3} $ are not negative (namely, the cooperative case), since one can choose $ \uk_{2} = \uk_{3} = 0 $ and, thus,
the second part of Hypotheses \ref{ipotesi1} is readily satisfied regardless of the values of $ C_{1}, C_{2} $, if neither of the coefficients $ a, b $ is trivial.
\end{rem}
The next result shows how we can pass from an $L^2$-estimate to an
$L^\infty$-estimate.
\begin{Proposition}\label{R-moser}
There is a constant $R>0$ such that
\[
\|\eu  \|_{L^\infty(Q_T)},\|\ev   \|_{L^\infty(Q_T)}<R
\]
for all solution pairs $(\eu  ,\ev   )$ of \eqref{eq:rhovarepsilon}
with $\varepsilon >0$ sufficiently small. In particular, one has
that
\[
\deg\big((u,v)-G_{\varepsilon}\big(1, f(u^+,v^+),
g(u^+,v^+)\big),B_R,0\big)=1.
\]
\end{Proposition}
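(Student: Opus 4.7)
The plan is to convert the $L^2(Q_T)$-bounds of Hypotheses~\ref{ipotesi1}.1 into uniform $L^\infty(Q_T)$-bounds by first upgrading them to pointwise-in-$t$ $L^2(\Omega)$-bounds and then invoking Lemma~\ref{limitatezza}. The main obstacle is precisely this upgrade: the nonlocal terms on the right-hand sides of~\eqref{2} are a priori only controlled in $L^1(0,T)$ (and moreover involve delays), whereas Lemma~\ref{limitatezza} demands a pointwise-in-$t$ constant bound on the coefficient of $u^{p-1}$.

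For the upgrade I would test the first equation of~\eqref{2} with $u_\varepsilon$ itself and integrate on $\Omega$. Non-negativity of $u_\varepsilon$ and $K_1$ (via Proposition~\ref{positivity}) discards the $\int K_1u_\varepsilon^2$ contribution, the diffusive terms are non-negative and can be dropped, and the remaining forcing is estimated via $\|u_\varepsilon\|_{L^p(\Omega)}\le C_\Omega\|u_\varepsilon\|_{L^2(\Omega)}$, which uses $p<2$ and the boundedness of $\Omega$. Setting $y(t):=\|u_\varepsilon(\cdot,t)\|_{L^2(\Omega)}^2$ and $h(t):=\|v_\varepsilon(\cdot,t-\tau_2)\|_{L^2(\Omega)}^2$, one obtains
\[
y'(t)\le C\bigl(1+h(t)\bigr)y(t)^{p/2}.
\]
Since $y,h$ are $T$-periodic with $\int_0^T y\le C_1$ and $\int_0^T h\le C_2$, there exists $t_0$ with $y(t_0)\le C_1/T$; integrating the inequality from $t_0$ up to a maximum point of $y$ produces $\max_t y\le C_1/T+C(T+C_2)(\max_t y)^{p/2}$, whence Young's inequality (using $p/2<1$) absorbs the nonlinear term and yields $\max_t y\le M_1$ uniformly in $\varepsilon$. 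The same argument applied to the second equation gives $\max_t\|v_\varepsilon(\cdot,t)\|_{L^2(\Omega)}^2\le M_2$.

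With these pointwise-in-$t$ $L^2$-bounds, the coefficient of $u_\varepsilon^{p-1}$ on the right-hand side of the first equation of~\eqref{2} is dominated by the constant $\|a\|_\infty+\ok_2 M_2$, so Lemma~\ref{limitatezza} supplies an $\varepsilon$-independent bound $\|u_\varepsilon\|_{L^\infty(Q_T)}\le R_1$, and symmetrically $\|v_\varepsilon\|_{L^\infty(Q_T)}\le R_2$; picking $R>\max\{R_1,R_2\}$ settles the first claim.

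For the degree computation I plan to use the affine homotopy of~\eqref{xcorrezione}. Proposition~\ref{positivity} forces any fixed point to be non-negative, so $u^+=u$ and $v^+=v$, and the energy bootstrap above applies verbatim (the extra additive forcing $(1-\sigma)\le 1$ is harmless in the estimate), so no solution sits on $\partial B_R$ for any $\sigma\in[0,1]$. At $\sigma=0$ the degenerate part of the diffusion vanishes and the system decouples into two singular $p$- and $q$-Laplacian periodic problems; a further homotopy $(u,v)\mapsto G_\varepsilon(0,sf(u^+,v^+)+1,sg(u^+,v^+)+1)$, $s\in[0,1]$, preserves the a priori bound and deforms the map into the constant $G_\varepsilon(0,1,1)$, whose value is the unique positive $T$-periodic solution pair of the two decoupled problems with unit forcing. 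Homotopy invariance therefore gives $\deg(I-G_\varepsilon(1,f(u^+,v^+),g(u^+,v^+)),B_R,0)=\deg(I-G_\varepsilon(0,1,1),B_R,0)=1$.
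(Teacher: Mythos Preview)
Your upgrade from $L^2(Q_T)$ to $\sup_t L^2(\Omega)$ and then to $L^\infty$ via Lemma~\ref{limitatezza} is essentially the paper's own argument, only phrased slightly differently: the paper divides the energy inequality by $y^{p/2}$ and integrates from the minimum to the maximum of $y(t)=\|u_\varepsilon(\cdot,t)\|_{L^2(\Omega)}^2$, reaching the same conclusion via a short contradiction argument based on Hypothesis~\ref{ipotesi1}.1. So the first half is fine.

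The degree computation, however, diverges from the paper and has a gap. The paper does \emph{not} use the homotopy~\eqref{xcorrezione} here; it uses the much simpler homotopy
\[
(u,v)=G_\varepsilon\bigl(1,\rho f(u^+,v^+),\rho g(u^+,v^+)\bigr),\qquad \rho\in[0,1],
\]
keeping $\sigma=1$ throughout. Scaling the forcing down by $\rho$ only shrinks the right-hand side, so inequality~\eqref{5} (and hence the $L^\infty$-bound from Lemma~\ref{limitatezza}) persists along this homotopy; at $\rho=0$ the forcing vanishes, the unique fixed point is $(0,0)$, and the degree equals that of the identity on $B_R$, namely $1$. Your choice of~\eqref{xcorrezione} instead varies $\sigma$: for $\sigma<1$ the degenerate diffusion is weakened by the factor $\sigma^{p-1}$ \emph{and} an extra positive forcing $(1-\sigma)$ is added. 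Your claim that ``the energy bootstrap above applies verbatim'' is not justified, because that bootstrap relies on $\int_0^T y\le C_1$ and $\int_0^T h\le C_2$ from Hypothesis~\ref{ipotesi1}.1, and that hypothesis is only assumed for solutions of~\eqref{eq:rhovarepsilon}, i.e.\ at $\sigma=1$. For $\sigma<1$ neither the weaker diffusion nor the additive forcing is a priori harmless for the $L^2(Q_T)$-bounds on which the whole chain rests. (The homotopy~\eqref{xcorrezione} \emph{is} used later in the paper, in Proposition~\ref{r}, but for the opposite purpose---showing the degree is zero on a \emph{small} ball---where no $L^\infty$ upper bound along the homotopy is needed.) Replacing your $\sigma$-homotopy by the paper's $\rho$-homotopy closes the gap immediately and makes the second deformation and the computation of $\deg(I-G_\varepsilon(0,1,1),B_R,0)$ unnecessary.
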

\begin{proof}
Assume $ \eu \neq 0 $, thus $ \eu > 0 $ and $ \ev \ge 0 $ in $ Q_T $ by
Proposition~\ref{positivity}. Multiplying by $ \eu $ the first
equation of \eqref{2}, integrating over $\Omega$ and using the
Steklov averages $(\eu  )_h \in H^1(Q_{T- \delta})$, $\delta,h>0$,
see \cite[p.85]{lsu}, we obtain
\[
\begin{aligned}
&\frac{1}{2} \frac{d}{dt} \int_\Omega (\eu  )_h^2 dx
+\varepsilon\int_{\Omega}|\nabla(\eu  )_h|^p dx + m^{p-1}\int_\Omega
(\eu  )_h^{(m-1)(p-1)}|\nabla (\eu  )_h|^p dx \\
&\le \left(\|a\|_{L^\infty(Q_T)} + \int_\Omega K_2(\xi,t)(\ev
)_h^2(\xi,
t-\tau_2)d\xi \right)\int_\Omega (\eu  )_h^p dx\\
&\le |\Omega|^{1-\frac{p}{2}}\left( \|a\|_{L^\infty(Q_T)} +
\int_\Omega K_2(\xi,t)(\ev  )_h^2(\xi, t-\tau_2)d\xi
\right)\left(\int_\Omega (\eu  )_h^2 dx\right)^{\frac{p}{2}}.
\end{aligned}
\]
Thus
\begin{equation}\label{eq:log}
\begin{aligned}
& \frac{\frac{1}{2} \frac{d}{dt} \int_\Omega (\eu  )_h^2 dx+
\varepsilon\int_{\Omega}|\nabla(\eu  )_h|^pdx+ m^{p-1}\int_\Omega
(\eu )_h^{(m-1)(p-1)}|\nabla (\eu)_h|^pdx}{\left(\int_\Omega (\eu
)_h^2 dx\right)^{\frac{p}{2}}}\\&\le
|\Omega|^{1-\frac{p}{2}}\left(\|a\|_{L^\infty(Q_T)}+\|K_2\|_{L^\infty(Q_T)}\int_{\Omega}(\ev
)_h^2(\xi,t-\tau_2)d\xi\right).
\end{aligned}
\end{equation}
Since $t\mapsto\|u(t)\|_{L^2(\Omega)}$ is continuous in $[0,T]$,
there exist $t_1$ and $t_2$ in $[0,T]$ such that
\[
\int_\Omega\eu ^2(x,t_1)dx=\min_{t\in[0,T]}\int_\Omega\eu
^2(x,t)dx\] and
\[
\int_\Omega\eu ^2(x,t_2)dx=\max_{t\in[0,T]}\int_\Omega\eu ^2(x,t)dx.
\]
Without loss of generality, by periodicity, we can assume that $t_1
<t_2$. Then, integrating \eqref{eq:log} between $t_1$ and $t_2$ and
passing to the limit as $h\rightarrow 0$, we find
\[
\int_{t_1}^{t_2} \left(\int_\Omega
\eu^2dx\right)^{-\frac{p}{2}}\frac{d}{dt}\left(\int_\Omega
\eu^2dx\right)dt \le 2|\Omega|^{1-\frac{p}{2}}
\int_0^T\left(\|a\|_{L^\infty(Q_T)}+\|K_2\|_{L^\infty(Q_T)}\int_{\Omega}\ev
^2(\xi,t-\tau_2)d\xi\right)dt.
\]
Thus
\[
\left(\int_\Omega\eu^2(x,t_2)dx\right)^{\frac{2-p}{2}}-\left(\int_\Omega\eu^2(x,t_1)dx\right)^{\frac{2-p}{2}}\le
C( T\|a\|_{L^\infty(Q_T)}+ \|K_2\|_{L^\infty(Q_T)}C_2),
\]
where $C:=(2-p)|\Omega|^{1-\frac{p}{2}}$. Hence
\[
\left(\int_\Omega\eu^2(x,t_2)dx\right)^{\frac{2-p}{2}} \le
\left(\int_\Omega\eu^2(x,t_1)dx\right)^{\frac{2-p}{2}} +C(
T\|a\|_{L^\infty(Q_T)}+ \|K_2\|_{L^\infty(Q_T)}C_2),
\]
or, equivalently,
\[
\max_{t \in[0,T]} \int_\Omega \eu^2(x,t)dx \le \left\{\left(\min_{t
\in [0,T]}\int_\Omega\eu^2(x,t)dx\right)^{\frac{2-p}{2}} +C(
T\|a\|_{L^\infty(Q_T)}+
\|K_2\|_{L^\infty(Q_T)}C_2)\right\}^{\frac{2}{2-p}}.
\]
This implies that there exists a constant $\gamma>0$, independent of
$\varepsilon$, such that
\[
\max_{t\in[0,T]}\int_\Omega\eu ^2(x,t)dx\le\gamma.
\]
Otherwise, for all $\gamma>0$ there would exist  $\varepsilon>0$ such that the
corresponding solution $\eu$ satisfies
\[
\gamma < \max_{t \in[0,T]} \int_\Omega \eu^2(x,t)dx \le
\left\{\left(\min_{t \in
[0,T]}\int_\Omega\eu^2(x,t)dx\right)^{\frac{2-p}{2}} +C(
T\|a\|_{L^\infty(Q_T)}+
\|K_2\|_{L^\infty(Q_T)}C_2)\right\}^{\frac{2}{2-p}}.
\]
Using the fact that $\frac{2}{2-p} >1$ and integrating the previous
inequality on $[0,T]$ for sufficiently large $\gamma$, one would
have
\[
\gamma T \le \iint_{Q_T}\eu^2(x,t)dxdt+CT( T\|a\|_{L^\infty(Q_T)}+
\|K_2\|_{L^\infty(Q_T)}C_2),
\]
that is $\eu$ is unbounded in $L^2(Q_T)$, in contradiction with
Hypothesis \ref{ipotesi1}.1. Of course, an analogous inequality holds for $\ev $.

Now, we have
\begin{equation}\label{5}
\begin{aligned}
&\dfrac{\partial \eu  }{\partial t}-\varepsilon\div(|\nabla \eu|^{p-2}\nabla \eu)- \div(|\nabla \eu^m |^{p-2}\nabla \eu)\le\\
&\le\left(\|a\|_{L^\infty(Q_T)}+\|K_2\|_{L^\infty(Q_T)}\max_{t\in
[0,T]}\int_\Omega\ev   ^2(x,t)dx\right)\eu^{p-1}   \le
\\&\le(\|a\|_{L^\infty(Q_T)}+ \|K_2\|_{L^\infty(Q_T)}\gamma)\eu^{p-1}  ,
\end{aligned}
\end{equation}
i.e.
\[
\dfrac{\partial \eu  }{\partial t}-\varepsilon\div(|\nabla
\eu|^{p-2}\nabla \eu)- \div(\nabla \eu^m|\nabla \eu^m |^{p-2})\le
K\eu^{p-1},\quad \text{in}\; Q_T,
\]
where $K:=\|a\|_{L^\infty(Q_T)}+ \|K_2\|_{L^\infty(Q_T)}\gamma$.
By Lemma \ref{limitatezza}
we conclude that $\|\eu \|_{L^\infty(Q_T)}\le R_1$
for some $R_1>0$ independent of $\varepsilon$. Analogously, $\|\ev
\|_{L^\infty(Q_T)}\le R_2$ for some constant $R_2>0$. Therefore it
is enough to choose $R>\max\{R_1, R_2\}$.

The previous calculations also show that any solution pair of
\[
(u,v)= G_{\varepsilon}(1,\rho f(u^+,v^+),\rho g(u^+,v^+))
\]
with $\rho\in [0,1]$ satisfies the same inequality (\ref{5}).
Therefore, the homotopy invariance property of the Leray-Schauder
degree implies that
\[
\begin{aligned}
&\deg\big((u,v)-T_{\varepsilon}(1,u^+,v^+),B_R,0\big) \\
= &\deg\big((u,v)-G_{\varepsilon}(1,\rho f(u^+,v^+),\rho
g(u^+,v^+)),B_R,0\big)
\end{aligned}
\]
for any $\rho\in [0,1].$ If we take $\rho=0$, using the fact that
$G_{\varepsilon}$ at $\rho=0$ is the zero map, we obtain
\[
\deg\big((u,v)-T_{\varepsilon}(1,u^+,v^+),B_R,0\big)=\deg\big((u,v),B_R,0\big)=1.
\]
\end{proof}

In order to prove that the solutions $(\eu ,\ev )$ of \eqref{2} we
are going to find are not bifurcating from the trivial solution
$(0,0)$, the next estimate will be crucial.
\begin{Lemma}\label{stima-gradiente}
Let $s>0$ be such that
\[s <
\min\left\{\displaystyle \frac{(p-1)(m-p)}{p}, \displaystyle
\frac{(q-1)(n-q)}{q}\right\}.\] Then, there exist two positive
constants $M_1$ and $M_2$ such that
\[
\left\|\nabla \eu^{\frac{(p-1)(m-1)-s}{p-1}}\right \|_{L^p(Q_T)} \le
M_1 \quad \text{and} \quad \left\|\nabla
\ev^{\frac{(q-1)(n-1)-s}{q-1}} \right\|_{L^q(Q_T)}\le M_2,
\]
for all solution pairs $(\eu  ,\ev   )$ of \eqref{eq:rhovarepsilon} and
 $\varepsilon >0$ sufficiently small.
\end{Lemma}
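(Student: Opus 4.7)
The plan is to test the first equation of \eqref{2} against a suitable nonnegative power of $\eu$ chosen so that the principal diffusion term produces a multiple of $\Int|\nabla \eu^{\alpha}|^{p}\,dxdt$, with $\alpha:=\frac{(p-1)(m-1)-s}{p-1}$. A short algebraic check shows that the right choice is $\phi=\eu^{\delta}$ with $\delta:=m-p-\frac{ps}{p-1}$: indeed $(m-1)(p-1)+\delta-1=(\alpha-1)p$, so a formal integration by parts of the $p,m$-Laplacian against $\eu^{\delta}$ yields
\[
m^{p-1}\delta\Int\eu^{(m-1)(p-1)+\delta-1}|\nabla \eu|^{p}\,dxdt=\frac{m^{p-1}\delta}{\alpha^{p}}\Int|\nabla \eu^{\alpha}|^{p}\,dxdt.
\]
The standing hypothesis $s<\frac{(p-1)(m-p)}{p}$ is precisely what guarantees $\delta>0$.

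To make this rigorous I would work with the truncated test functions $\phi_{\eta}:=(\eu+\eta)^{\delta}-\eta^{\delta}$, $\eta>0$, which vanish on $\partial\Omega\times(0,T)$, are $T$-periodic in $t$ and lie in $L^{p}(0,T;W^{1,p}_{0}(\Omega))\cap L^{\infty}(Q_{T})$. Applying the Steklov-averaging procedure from \cite[p.~85]{lsu} and integrating over one period, the time-derivative term vanishes by $T$-periodicity of $\eu$, and the $\varepsilon$-diffusion term is nonnegative and can be discarded. What remains is
\[
m^{p-1}\delta\Int \eu^{(m-1)(p-1)}(\eu+\eta)^{\delta-1}|\nabla \eu|^{p}\,dxdt\le\Int F(x,t)\,\eu^{p-1}\,\phi_{\eta}\,dxdt,
\]
where $F(x,t):=a(x,t)-\int_{\Omega}K_{1}(\xi,t)\eu^{2}(\xi,t-\tau_{1})\,d\xi+\int_{\Omega}K_{2}(\xi,t)\ev^{2}(\xi,t-\tau_{2})\,d\xi$. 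By Proposition~\ref{R-moser} and Hypotheses~\ref{ipotesi}, both $\|F\|_{L^{\infty}(Q_{T})}$ and $\|\eu\|_{L^{\infty}(Q_{T})}$ are bounded independently of $\varepsilon$ (for $\varepsilon$ small), so the right-hand side is uniformly controlled.

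Passing to the limit $\eta\to 0^{+}$ by monotone convergence on the left-hand side (exploiting $\eu>0$ in $Q_{T}$ from Proposition~\ref{positivity} to identify the pointwise limit $(\eu+\eta)^{\delta-1}\to\eu^{\delta-1}$) and by dominated convergence on the right, I would conclude
\[
\frac{m^{p-1}\delta}{\alpha^{p}}\Int|\nabla \eu^{\alpha}|^{p}\,dxdt\le\|F\|_{L^{\infty}(Q_{T})}\,\|\eu\|_{L^{\infty}(Q_{T})}^{p-1+\delta}\,|Q_{T}|,
\]
which delivers the bound $M_{1}$. The argument for $\ev$ is identical with $(n,q,K_{4},K_{3})$ in place of $(m,p,K_{1},K_{2})$, relies on the threshold $s<\frac{(q-1)(n-q)}{q}$, and yields $M_{2}$.

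The main obstacle is justifying the use of $\eu^{\delta}$ as a test function when $\delta\in(0,1)$: then $\delta\eu^{\delta-1}\nabla\eu$ is singular wherever $\eu$ vanishes and, in particular, near the parabolic boundary. The truncation by $(\eu+\eta)^{\delta}-\eta^{\delta}$ bypasses this, but requires the careful monotone-convergence passage described above; combined with the Steklov-averaging bookkeeping, this is the only genuinely delicate technical ingredient of the proof.
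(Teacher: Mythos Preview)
Your argument is correct and uses the same test function as the paper: your exponent $\delta=m-p-\frac{ps}{p-1}$ coincides with the paper's $\gamma=\frac{(p-1)(m-p)-ps}{p-1}$, and the identity $(m-1)(p-1)+\delta-1=p(\alpha-1)$ is exactly what makes the principal term collapse to $\|\nabla\eu^{\alpha}\|_{L^{p}}^{p}$.

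The two proofs diverge only in how the right-hand side is controlled. You invoke Proposition~\ref{R-moser} directly to bound $\|F\|_{L^{\infty}}$ and $\|\eu\|_{L^{\infty}}$, obtaining $M_{1}$ in one step. The paper instead applies H\"older with exponent $\beta=p\frac{(p-1)(m-1)-s}{(p-1)(m-1)-ps}>1$ and Poincar\'e to estimate $\Int\eu^{\gamma+p-1}$ by $C\|\nabla\eu^{\alpha}\|_{L^{p}}^{p/\beta}$, producing a self-similar inequality of the form $X^{p}\le C'X^{p/\beta}$ (the constant $C'$ still depending on $R$ through the $K_{2}$-term). Your route is shorter and perfectly adequate for the lemma as stated; the paper's route yields a slightly different explicit expression for $M_{1}$ which is later quoted verbatim in the definition of $r_{0}$ before Proposition~\ref{r}, but either constant works there. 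Your truncation $\phi_{\eta}=(\eu+\eta)^{\delta}-\eta^{\delta}$ and the monotone-convergence passage are in fact more careful than the paper's treatment, which simply tests against $\eu^{\gamma}$ without commenting on the potential singularity of $\eu^{\gamma-1}\nabla\eu$ near $\partial\Omega$ when $\gamma<1$ (a situation that genuinely occurs for admissible $m,p,s$, e.g.\ $p=3/2$, $m=2$).
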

\begin{proof} Let $\displaystyle \gamma:= \frac{(p-1)(m-p)-ps}{p-1} >0.$
Multiplying the equation
\[
\begin{aligned}
\dfrac{\partial \eu  }{\partial t}&-\varepsilon\div(|\nabla
\eu|^{p-2}\nabla \eu) - \div(|\nabla \eu^m |^{p-2}\nabla \eu^m)
=\left (a(x,t)- \int_{\Omega}K_1(\xi,t)\eu  ^2(\xi, t-\tau_1)d\xi +\right.\\
&\left. + \int_{\Omega}K_2(\xi,t)\ev   ^2(\xi,
t-\tau_2)d\xi\right)\eu^{p-1}
\end{aligned}
\]
by $\eu^\gamma $, integrating over $Q_T$ and passing to the limit in
the Steklov averages, by the $T$-periodicity of $\eu$ we obtain
\[
\begin{aligned}
&\varepsilon \Int |\nabla \eu|^{p-2} \nabla \eu \nabla \eu^{\gamma} dxdt
+ \Int |\nabla \eu^m|^{p-2}\nabla \eu^m
\nabla \eu^\gamma dxdt \\
&= \Int a(x,t) \eu^{\gamma+p-1}(x,t) dxdt -
\int_0^T\left(\int_\Omega \eu^{\gamma+p-1}(x,t)dx\right)
\left(\int_{\Omega}K_1(\xi,t)\eu ^2(\xi,
t-\tau_1)d\xi\right)dt\\
&+ \int_0^T\left(\int_\Omega \eu^{\gamma+p-1}(x,t)dx\right) \left(
\int_{\Omega}K_2(\xi,t)\ev   ^2(\xi, t-\tau_2)d\xi\right)dt
\end{aligned}
\]
Now, since
\[
\Int |\nabla \eu|^{p-2}\nabla \eu \nabla \eu^{\gamma} dxdt = \gamma
\Int \eu^{\gamma-1}|\nabla \eu|^p dxdt \ge0,
\]
then
\begin{equation}\label{7}
\begin{aligned}
&\gamma m^{p-1} \left(\frac{p-1}{(p-1)(m-1)-s}\right)^p\left\|
\nabla \eu^{\frac{(p-1)(m-1)-s}{p-1}}\right\|_{L^p(Q_T)}^p = \gamma
m^{p-1}\Int \eu^{(p-1)(m-1)+\gamma -1}|\nabla \eu|^pdxdt\\
& \le \varepsilon \Int |\nabla \eu|^{p-2}\nabla \eu \nabla
\eu^{\gamma} dxdt + \Int |\nabla \eu^m|^{p-2}\nabla \eu^m \nabla
\eu^\gamma dxdt
\\& \le \|a\|_{L^\infty(Q_T)}\Int \eu^{\gamma+p-1}(x,t) dxdt - \int_0^T\left(\int_\Omega
\eu^{\gamma+p-1}(x,t)dx\right) \left(\int_{\Omega}K_1(\xi,t)\eu
^2(\xi,
t-\tau_1)d\xi\right)dt\\
&+ \int_0^T\left(\int_\Omega \eu^{\gamma+p-1}(x,t)dx\right) \left(
\int_{\Omega}K_2(\xi,t)\ev   ^2(\xi, t-\tau_2)d\xi\right)dt.
\end{aligned}
\end{equation}

Moreover, by the H\"{o}lder inequality with $\beta:=p\displaystyle
\frac{(p-1)(m-1)-s}{(p-1)(m-1)-ps}>1$ and the Poincar\'{e} inequality,
one has
\begin{equation}\label{8}
\begin{aligned}
\Int \eu^{\gamma+p-1} dxdt &\le |Q_T|^{\frac{1}{\beta'}}\left(\Int
\eu^{p\frac{(p-1)(m-1)-s}{p-1}}dxdt\right)^{\frac{1}{\beta}}=
|Q_T|^{\frac{1}{\beta'}}\left\|
\eu^{\frac{(p-1)(m-1)-s}{p-1}}\right\|_{L^p(Q_T)}^{\frac{p}{\beta}} \\
& \le |Q_T|^{\frac{1}{\beta'}}\left(
\frac{1}{\mu_p}\right)^{\frac{1}{\beta}}\left\| \nabla
\eu^{\frac{(p-1)(m-1)-s}{p-1}}\right\|_{L^p(Q_T)}^{\frac{p}{\beta}}.
\end{aligned}
\end{equation}
Here $\beta'$ is such that $\displaystyle \frac{1}{\beta}+
\frac{1}{\beta'} =1$. Thus \eqref{7} and \eqref{8} imply
\begin{equation}\label{7'}
\begin{aligned}
&\gamma m^{p-1}\left(\frac{p-1}{(p-1)(m-1)-s}\right)^p \left\|
\nabla \eu^{\frac{(p-1)(m-1)-s}{p-1}}\right\|_{L^p(Q_T)}^p \le
\|a\|_{L^\infty(Q_T)} |Q_T|^{\frac{1}{\beta'}}\left(
\frac{1}{\mu_p}\right)^{\frac{1}{\beta}}\left\| \nabla
\eu^{\frac{(p-1)(m-1)-s}{p-1}}\right\|_{L^p(Q_T)}^{\frac{p}{\beta}}
\\&- \int_0^T\left(\int_\Omega \eu^{\gamma+p-1}(x,t)dx\right)
\left(\int_{\Omega}K_1(\xi,t)\eu ^2(\xi,
t-\tau_1)d\xi\right)dt\\
&+ \int_0^T\left(\int_\Omega \eu^{\gamma+p-1}(x,t)dx\right) \left(
\int_{\Omega}K_2(\xi,t)\ev   ^2(\xi, t-\tau_2)d\xi\right)dt.
\end{aligned}
\end{equation}
By assumptions, there are constants $\ok_i \ge 0$, $i=2,3$, such
that $K_i(x,t) \le \ok_i$ for a.a. $(x,t) \in Q_T$. Thus, by
\eqref{8}, \eqref{7'} and Proposition \ref{R-moser}, we have
\[
\begin{aligned}
\gamma m^{p-1}&\left(\frac{p-1}{(p-1)(m-1)-s}\right)^p \left\|
\nabla \eu^{\frac{(p-1)(m-1)-s}{p-1}}\right\|_{L^p(Q_T)}^p \\
&\le
\|a\|_{L^\infty(Q_T)} |Q_T|^{\frac{1}{\beta'}}\left(\frac{1}{\mu_p}\right)^{\frac{1}{\beta}}
\left\| \nabla\eu^{\frac{(p-1)(m-1)-s}{p-1}}\right\|_{L^p(Q_T)}^{\frac{p}{\beta}}
+ \ok_2|\Omega|R^2 \Int \eu^{\gamma+p-1} dxdt \\
&\le (\|a\|_{L^\infty(Q_T)} + \ok_2|\Omega| R^2)
|Q_T|^{\frac{1}{\beta'}}\left(
\frac{1}{\mu_p}\right)^{\frac{1}{\beta}}\left\| \nabla
\eu^{\frac{(p-1)(m-1)-s}{p-1}}\right\|_{L^p(Q_T)}^{\frac{p}{\beta}}.
\end{aligned}
\]
In particular,
\[
\left\| \nabla \eu^{\frac{(p-1)(m-1)-s}{p-1}}\right\|_{L^p(Q_T)} \le
M_1,
\]
where
\[
M_1 := \left(\frac{(\|a\|_{L^\infty(Q_T)} + \ok_2|\Omega| R^2)
|Q_T|^{\frac{1}{\beta'}}\left(
\frac{1}{\mu_p}\right)^{\frac{1}{\beta}}[(p-1)(m-1)-s)]^p}{
[m(p-1)]^{p-1}[(p-1)(m-p)-ps)] }\right)^{\frac{\beta}{p(\beta-1)}}.
\]

Analogously, one can prove that
\[
\left\| \nabla \ev^{\frac{(q-1)(n-1)-s}{q-1}}\right\|_{L^q(Q_T)} \le
M_2,
\]
where
\[
M_2 := \left(\frac{(\|b\|_{L^\infty(Q_T)} + \ok_3|\Omega| R^2)
|Q_T|^{\frac{1}{\delta'}}\left(
\frac{1}{\mu_q}\right)^{\frac{1}{\delta}}[(q-1)(n-1)-s)]^q}{
[n(q-1)]^{q-1}[(q-1)(n-q)-qs)]
}\right)^{\frac{\delta}{q(\delta-1)}}.
\]
Here $\delta$ and $\delta'$ are such that $\delta:=q\displaystyle
\frac{(q-1)(n-1)-s}{(q-1)(n-1)-qs}$ and $\displaystyle
\frac{1}{\delta}+ \frac{1}{\delta'} =1$.
\end{proof}
\begin{rem}\label{rem:mn}
Observe that in the case when $ p = q = 2 $ a priori bounds for $ \| \nabla \eu^{m} \|_{ L^{2}( Q_{T} ) }, \| \nabla \ev^{n} \|_{ L^{2}( Q_{T} ) } $ have been
obtained in \cite{fnp} for sufficiently small $ \epsilon > 0 $ under the conditions that $ m, n > 1 $, i. e. in the case of slow diffusion.
Under the same condition $ m, n > 1 $ a priori bounds for $ \| \nabla \eu^{m} \|_{ L^{p}( Q_{T} ) }, \| \nabla \ev^{n} \|_{ L^{q}( Q_{T} ) } $ have been obtained
in \cite{fnp1} when $ p, q > 2 $, which again corresponds to the case of slow diffusion.
Therefore the assumptions $ m > p $ and $ n > q $ are required in Lemma~\ref{stima-gradiente} only for the singular case $ p, q \in ( 1 , 2 ) $, which, as already noticed,
allows the fast diffusion if $ p, q \in \left( 1, \frac{ 1 + \sqrt{5} }{ 2 } \right) $.
Finally, observe that, if $ p, q > 1 $, we have the fast diffusion when $ m < 1/(p-1) $ and $ n < 1/(q-1) $: to the best of our knowledge, this case is not treated in the existing literature devoted to this problem.
\end{rem}
The following result guarantees that the foreseen solutions $(\eu
,\ev )$ of \eqref{2} are not bifurcating from the trivial solution
$(0,0)$ as $\varepsilon$ ranges in $(0,\varepsilon_0)$, where
$\varepsilon_0$ is such that
\begin{equation}\label{varepsilon0}
\begin{aligned}
\theta(C_1, C_2):= \min&\left\{\frac{1}{T}\Int  a(x,t)e_p^p(x)
dxdt -\varepsilon_0 \mu_p-\dfrac{\uk_2C_2}{T}, \right.\\
&\left. \frac{1}{T}\Int b(x,t)e_q^q(x) dxdt -\varepsilon_0
\mu_q-\dfrac{\uk_3C_1}{T}\right\} >0,
\end{aligned}
\end{equation}
where $\mu_p$, $\mu_q$, $e_p$, $e_q$, $\uk_2$ and $\uk_3$ are as in
Hypotheses \ref{ipotesi1}.

To this aim let
\[
\begin{aligned}
r_0:=\min &\left\{\left(\dfrac{\Int  a(x,t) e_p^p(x)dxdt -
\varepsilon_0T\mu_p}{D_1}\right)^{\frac{1}{2}}\!, \left(\dfrac{\Int
 a(x,t)e_p^p(x) dxdt -
\varepsilon_0T\mu_p}{D_1}\right)^{\frac{1}{s}}\!,\right.\\
&\left. \quad\left(\dfrac{\Int b(x,t)e_q^q(x) dxdt -
\varepsilon_0T\mu_q}{D_2}\right)^{\frac{1}{2}}\!, \left(\dfrac{\Int
 b(x,t)e_q^q(x) dxdt -
\varepsilon_0T\mu_q}{D_2}\right)^{\frac{1}{s}}\right\},
\end{aligned}
\]
where
\[
D_1:=\|K_1\|_{L^1(Q_T)}+ \|K_2\|_{L^1(Q_T)} + \displaystyle p
\|e_p^{p-1}\|_{L^\infty(\Omega)}\|\nabla e_p\|_{L^\infty(\Omega)}
 \left(\frac{m(p-1)M_1}{(p-1)(m-1)-s} \right)^{p-1}
|Q_T|^{\frac{1}{p}},
\]
\[
D_2:=\|K_3\|_{L^1(Q_T)}+ \|K_4\|_{L^1(Q_T)} + \displaystyle q
\|e_q^{q-1}\|_{L^\infty(\Omega)}\|\nabla e_q\|_{L^\infty(\Omega)}
 \left(\frac{n(q-1)M_2}{(q-1)(n-1)-s} \right)^{q-1}
|Q_T|^{\frac{1}{q}}
\]
and $M_1$, $M_2$, $s$ are as in Lemma \ref{stima-gradiente}. By
\eqref{varepsilon0}, $r_0$ is obviously positive.
\begin{Proposition}\label{r}
For all solution pairs $(\eu  ,\ev   )$ of \eqref{eq:rhovarepsilon} and
all $\varepsilon \in (0, \varepsilon_0)$, it results
\[
\max\{\|\eu   \|_{L^\infty(Q_T)}, \|\ev   \|_{L^\infty(Q_T)}\} \ge
r_0.
\]
Moreover $\deg\big(\,(u, v)- T_{\varepsilon}(1, u^+, v^+),
B_r,0\big)=0$ for all $r\in(0,r_0)$.
\end{Proposition}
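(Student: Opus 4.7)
I would argue by contradiction. Suppose there exist $\varepsilon\in(0,\varepsilon_0)$ and a pair $(\eu,\ev)$ solving \eqref{eq:rhovarepsilon} with $\max\{\|\eu\|_{L^\infty(Q_T)},\|\ev\|_{L^\infty(Q_T)}\}<r_0$. By Proposition~\ref{positivity} both components are non-negative and at least one of them is strictly positive in $Q_T$; without loss of generality assume $\eu>0$ in $Q_T$ (the case $\ev>0$ is handled symmetrically with the second equation and the eigenpair $(\mu_q,e_q)$, using the test function $e_q^q/\ev^{q-1}$). The central idea is to plug the Picone-type test function $\varphi=e_p^p/\eu^{p-1}$ into the weak formulation of the first equation in Definition~\ref{soldebole}, after the regularization $\varphi_\eta=e_p^p/(\eu+\eta)^{p-1}$ with $\eta\to 0^+$ combined with the usual Steklov-average approximation in time. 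Two features are decisive: by $T$-periodicity the time term vanishes because $\eu\,\partial_t\varphi$ reduces to a multiple of $\partial_t(\eu^{2-p})\,e_p^p$; and the identity $\eu^{p-1}\varphi=e_p^p$, together with $\int_\Omega e_p^p\,dx=1$, collapses the nonlocal source integrals to $\|\eu\|_{L^\infty}^2\|K_1\|_{L^1(Q_T)}+\|\ev\|_{L^\infty}^2\|K_2\|_{L^1(Q_T)}$.

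For the remaining spatial contributions, Picone's inequality $|\nabla\eu|^{p-2}\nabla\eu\cdot\nabla(e_p^p/\eu^{p-1})\le|\nabla e_p|^p$ together with $\int_\Omega|\nabla e_p|^p\,dx=\mu_p$ bounds the $\varepsilon$-term by $\varepsilon T\mu_p$. For the $|\nabla\eu^m|^{p-2}\nabla\eu^m$-term I discard its manifestly non-positive piece $-(p-1)m^{p-1}\iint\eu^{(m-1)(p-1)-p}e_p^p|\nabla\eu|^p\,dxdt$ and rewrite what remains via $\eu^{(m-2)(p-1)}|\nabla\eu|^{p-2}\nabla\eu=k^{-(p-1)}\eu^s|\nabla\eu^k|^{p-2}\nabla\eu^k$ with $k=(m-1)-s/(p-1)$; Hölder's inequality and the a priori bound $\|\nabla\eu^k\|_{L^p(Q_T)}\le M_1$ of Lemma~\ref{stima-gradiente} then give exactly $D_1^{(3)}\|\eu\|_{L^\infty}^s$, where $D_1^{(3)}$ denotes the third summand of $D_1$. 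Assembling all the pieces and using $\varepsilon<\varepsilon_0$ one arrives at
\[
\iint_{Q_T}a\,e_p^p\,dxdt-\varepsilon_0T\mu_p\le D_1^{(3)}\|\eu\|_{L^\infty}^s+\|K_1\|_{L^1(Q_T)}\|\eu\|_{L^\infty}^2+\|K_2\|_{L^1(Q_T)}\|\ev\|_{L^\infty}^2.
\]
The hypothesis $\|\eu\|_{L^\infty},\|\ev\|_{L^\infty}<r_0$, together with the two bounds $D_1 r_0^s\le\iint a\,e_p^p-\varepsilon_0T\mu_p$ and $D_1 r_0^2\le\iint a\,e_p^p-\varepsilon_0T\mu_p$ encoded in the very definition of $r_0$, forces the right-hand side to be at most $\frac{D_1^{(3)}+\|K_1\|_{L^1(Q_T)}+\|K_2\|_{L^1(Q_T)}}{D_1}=1$ times the left-hand side, which contradicts the \emph{strict} inequality guaranteed by $\varepsilon<\varepsilon_0$.

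For the degree assertion I fix $r\in(0,r_0)$ and consider the homotopy $H_\tau(u,v):=(u,v)-G_\varepsilon\bigl(1,f(u^+,v^+)+\tau,g(u^+,v^+)+\tau\bigr)$, $\tau\in[0,1]$. The Picone argument above applies verbatim to every non-trivial zero of $H_\tau$ because the added source $+\tau$ only enlarges the left-hand side of the key identity through the non-negative extra term $\tau\iint e_p^p/u^{p-1}$; hence any such zero still satisfies $\max\{\|u\|_{L^\infty},\|v\|_{L^\infty}\}\ge r_0>r$. On the other hand, for every $\tau>0$ the trivial pair $(0,0)$ fails to be a zero, since $G_\varepsilon(1,\tau,\tau)$ is the strictly positive solution of \eqref{3} driven by the constant source $\tau$. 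Therefore $H_\tau$ has no zeros at all in $\overline{B_r}$ for $\tau\in(0,1]$, whence $\deg(H_1,B_r,0)=0$; since $H_0$ has no zeros on $\partial B_r$, the homotopy invariance of the Leray--Schauder degree yields $\deg(H_0,B_r,0)=0$, as claimed. The main technical obstacle throughout is the rigorous use of the singular test function $\varphi=e_p^p/\eu^{p-1}$: one must combine the Steklov-average approximation in time, the cutoff $(\eu+\eta)^{p-1}$, and the $C^{1,\alpha}$-regularity and Hopf-type boundary behaviour of $e_p$ in order to justify every integration by parts and the passage to the limit $\eta\to 0^+$.
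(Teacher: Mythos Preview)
Your argument is essentially the same as the paper's: a Picone-type test function $\phi^p/\eu^{p-1}$ in the first equation, the Allegretto--Huang inequality for the $\varepsilon$-term, discarding the good (negative) piece of the $m$-term and controlling the remaining piece via $\|\eu\|_{L^\infty}^s$ times the gradient bound from Lemma~\ref{stima-gradiente}, and then reading off the contradiction from the definition of $r_0$. Two small differences are worth noting. First, the paper avoids the boundary singularity of $e_p^p/\eu^{p-1}$ by working with $\phi\in C_c^\infty(\Omega)$ and passing to the limit $\phi_j\to e_p$ in $C^1_0(\Omega)$ only at the end; this is slightly cleaner than your $(\eu+\eta)$-regularization, which additionally needs the boundary behaviour of $e_p$ and $\eu$ to justify the limit $\eta\to 0$. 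Second, for the degree you deform only the source term (keeping $\sigma=1$), whereas the paper uses the full homotopy $\sigma\mapsto G_\varepsilon(\sigma,f+(1-\sigma),g+(1-\sigma))$ and lands at $\sigma=0$, where the degenerate part of the operator is absent. Both homotopies work, but note that in either case one must check that Lemma~\ref{stima-gradiente} (and the $L^\infty$ bound $R$) extend to the perturbed problems along the homotopy; you implicitly assume this when you say the Picone argument ``applies verbatim'', and the paper is equally brief here. Finally, a minor expository point: the strict inequality that yields the contradiction comes from $\|\eu\|_{L^\infty},\|\ev\|_{L^\infty}<r_0$, not from $\varepsilon<\varepsilon_0$ as you phrase it.
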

\begin{proof}
By contradiction, assume that for some $r\in(0,r_0)$ there exists a
pair $(\eu  , \ev   )\neq (0,0)$ such that $(\eu  ,\ev
)=G_{\varepsilon }\big(1, f(\eu ^+,\ev ^+), g(\eu ^+,\ev ^+)\big)$
with $\|\eu \|_{L^\infty(Q_T)}\le r$ and $\|\ev \|_{L^\infty(Q_T)}
\le r$. Assume that $\eu  \neq 0$ and take $\phi \in
C_c^{\infty}(\Omega)$. Since by Proposition \ref{positivity} we have
$\eu  >0$ in $Q_T$, we can multiply the equation
\[
\begin{aligned}
&\dfrac{\partial \eu  }{\partial t}-\varepsilon\div(|\nabla \eu
|^{p-2}\nabla \eu) - \div(|\nabla \eu^m |^{p-2}\nabla \eu^m)=\\=&
\left (a(x,t)- \int_{\Omega}K_1(\xi,t)\eu  ^2(\xi, t-\tau_1)d\xi \right.
\left. + \int_{\Omega}K_2(\xi,t)\ev   ^2(\xi,
t-\tau_2)d\xi\right)\eu^{p-1}
\end{aligned}
\]
by $\dfrac{\phi^p}{\eu^{p-1}  }$, integrate over $Q_T$ and pass to
the limit in the Steklov averages in order to obtain
\begin{equation}\label{6}
\begin{aligned}
&- \varepsilon\Int  \frac{\phi^p}{\eu^{p-1}  } \div(|\nabla \eu
|^{p-2}\nabla \eu)dxdt - \Int \frac{\phi^p}{\eu^{p-1} }\div(|\nabla
\eu^m |^{p-2}\nabla \eu^m)dxdt= \Int \!\!\phi^p(x) a(x,t) dxdt-\\& -
\Int \!\!\!\!\phi^p(x)\left(\int_{\Omega}K_1(\xi,t)\eu ^2(\xi,
t-\tau_1)d\xi \right)dxdt +\Int
\!\!\!\!\phi^p(x)\left(\int_{\Omega}K_2(\xi,t)\ev   ^2(\xi,
t-\tau_2)d\xi \right)dxdt,
\end{aligned}
\end{equation}
by the $T$-periodicity of $\eu  $. By the generalized Picone's identity
due to Allegretto-Huan, see \cite{ah}, one has
\begin{equation}\label{6'}
\begin{aligned}
-\varepsilon \!\!\Int \!\!\frac{\phi^p}{\eu^{p-1}  }  \div(|\nabla \eu
|^{p-2}\nabla \eu) dxdt&= \varepsilon \!\!\Int|\nabla \eu |^{p-2}
\nabla\eu \nabla \left(\frac{\phi^p}{\eu^{p-1}  }\right)  dxdt\le
\varepsilon\!\!\Int|\nabla \phi |^pdxdt.
\end{aligned}
\end{equation}
Indeed, we have that
\[
\begin{aligned}
|\nabla \eu|^{p - 2} \nabla \eu \nabla \left( \dfrac{\phi^p}{\eu^{p -1}} \right)
&\le p |\nabla \phi| \left( \dfrac{\phi}{\eu} |\nabla \eu| \right)^{p - 1} - (p - 1)\left( \dfrac{\phi}{\eu} |\nabla \eu| \right)^p\\
&= \left( \dfrac{\phi}{\eu} |\nabla \eu| \right)^p + p \left( \dfrac{\phi}{\eu} |\nabla \eu| \right)^{p - 1} \left( |\nabla \phi| - \dfrac{\phi}{\eu} |\nabla \eu| \right)\\
&\le |\nabla \phi|^p,
\end{aligned}
\]
since the function $\R \ni \xi \mapsto |\xi|^p$ is convex for $p>1$.

Moreover,
\begin{equation}\label{6''}
\begin{aligned}
&- \Int \frac{\phi^p}{\eu^{p-1} }\div(\nabla \eu^m|\nabla \eu^m
|^{p-2})dxdt= \Int  \nabla \left(\frac{\phi^p}{\eu^{p-1} }\right) \nabla
\eu^m|\nabla
\eu^m |^{p-2} dxdt \\
&= m^{p-1}\Int \eu^{(m-1)(p-1)}|\nabla \eu|^{p-2} \nabla \eu
\frac{\eu^{p-1}\nabla \phi^p -\phi^p \nabla \eu^{p-1}}{\eu^{2(p-1)}}dxdt \\
&= pm^{p-1}\Int \phi^{p-1} \eu^{(p-1)(m-2)}|\nabla \eu|^{p-2} \nabla
\eu \nabla \phi dxdt - m^{p-1}(p-1)\Int \phi^p
\eu^{(m-2)(p-1)-1}|\nabla \eu|^{p}
dxdt\\
&\le p \|\phi^{p-1}\|_{L^\infty(\Omega)}\|\nabla
\phi\|_{L^\infty(\Omega)} m^{p-1} \Int |\nabla \eu|^{p-1}
\eu^{(p-1)(m-2)}dxdt.
\end{aligned}
\end{equation}
By \eqref{6}-\eqref{6''}, it follows
\[
\begin{aligned}
 &\Int
\!\!\phi^p(x) a(x,t) dxdt - \Int
\!\!\!\!\phi^p(x)\left(\int_{\Omega}K_1(\xi,t)\eu ^2(\xi,
t-\tau_1)d\xi \right)dxdt \\
&+\Int \!\!\!\!\phi^p(x)\left(\int_{\Omega}K_2(\xi,t)\ev   ^2(\xi,
t-\tau_2)d\xi \right)dxdt\\
& \le p \|\phi^{p-1}\|_{L^\infty(\Omega)}\|\nabla
\phi\|_{L^\infty(\Omega)} m^{p-1} \Int |\nabla \eu|^{p-1}
\eu^{(p-1)(m-2)}dxdt +\varepsilon\!\!\Int |\nabla \phi |^pdxdt.
\end{aligned}\]
Taking $\phi(x) = \phi_j(x) \rightarrow e_p(x)$ in $C^1_0(\Omega)$
as $j \rightarrow + \infty$ and since $\varepsilon < \varepsilon_0$, one
has
\[
\begin{aligned}
 &\Int
\!\!e_p^p(x) a(x,t) dxdt - \Int
\!\!\!\!e_p^p(x)\left(\int_{\Omega}K_1(\xi,t)\eu ^2(\xi,
t-\tau_1)d\xi \right)dxdt \\
&+\Int \!\!\!\!e_p^p(x)\left(\int_{\Omega}K_2(\xi,t)\ev   ^2(\xi,
t-\tau_2)d\xi \right)dxdt\\
& \le p \|e_p^{p-1}\|_{L^\infty(\Omega)}\|\nabla
e_p\|_{L^\infty(\Omega)} m^{p-1} \Int |\nabla \eu|^{p-1}
\eu^{(p-1)(m-2)}dxdt +\varepsilon_0\!\!\Int |\nabla e_p|^pdxdt.
\end{aligned}\]
Taking into account that $\|e_p\|_{L^p(\Omega)}=1$, the previous
inequality imply
\begin{equation}\label{10}
\begin{aligned}
&\Int e_p^p(x) a(x,t) dxdt -\varepsilon_0T\mu_p \le  \Int
K_1(\xi,t)\eu ^2(\xi, t-\tau_1)d\xi dt - \Int K_2(\xi,t)\ev
^2(\xi, t-\tau_2)d\xi dt \\&+ p
\|e_p^{p-1}\|_{L^\infty(\Omega)}\|\nabla e_p\|_{L^\infty(\Omega)}
m^{p-1} \Int |\nabla \eu|^{p-1} \eu^{(p-1)(m-2)}dxdt.
\end{aligned}
\end{equation}

Now we estimate the term $\Int |\nabla \eu|^{p-1}
\eu^{(p-1)(m-2)}dxdt$. Since $\|\eu \|_{L^\infty(Q_T)}\le r$, using
the H\"{o}lder inequality, one has
\begin{equation}\label{11}
\begin{aligned}
&\Int |\nabla \eu|^{p-1} \eu^{(p-1)(m-2)}dxdt \le r^s  \Int |\nabla
\eu|^{p-1} \eu^{(p-1)(m-2)-s}dxdt \\
&= r^s \left(\frac{p-1}{(p-1)(m-1)-s} \right)^{p-1}\Int \left|\nabla
\eu ^{\frac{(p-1)(m-1)-s}{p-1}}\right|^{p-1} dxdt \\
&\le r^s \left(\frac{p-1}{(p-1)(m-1)-s} \right)^{p-1}
|Q_T|^{\frac{1}{p}} \left \|\nabla \eu
^{\frac{(p-1)(m-1)-s}{p-1}}\right\|_{L^p(Q_T)}^{p-1}.
\end{aligned}
\end{equation}
 Observe that $(p-1)(m-1)-s>0$, since, by
assumption, $s <
\displaystyle \frac{(p-1)(m-p)}{p} <(p-1)(m-1)$. \\
By Lemma \ref{stima-gradiente}, \eqref{10} and \eqref{11}, it
follows
\[
\begin{aligned}
&\Int e_p^p(x) a(x,t) dxdt -\varepsilon_0T\mu_p \le
(\|K_1\|_{L^1(Q_T)}+ \|K_2\|_{L^1(Q_T)})r^2\\
& + p \|e_p^{p-1}\|_{L^\infty(\Omega)}\|\nabla
e_p\|_{L^\infty(\Omega)}
 \left(\frac{m(p-1)M_1}{(p-1)(m-1)-s} \right)^{p-1}
|Q_T|^{\frac{1}{p}}r^s\\
& \le (\|K_1\|_{L^1(Q_T)}+ \|K_2\|_{L^1(Q_T)} + C)\max\{r^2, r^s\},
\end{aligned}
\]
where $C:= \displaystyle p \|e_p^{p-1}\|_{L^\infty(\Omega)}\|\nabla
e_p\|_{L^\infty(\Omega)}
 \left(\frac{m(p-1)M_1}{(p-1)(m-1)-s} \right)^{p-1}
|Q_T|^{\frac{1}{p}}$.
\\
 Thus, if $\max\{r^2, r^s\} =r^2$, then
\[\begin{aligned}
r_0\le\left(\dfrac{\Int e_p^p (x)
a(x,t)dxdt-\varepsilon_0\mu_pT}{\|K_1\|_{L^1(Q_T)}+ \|K_2\|_{L^1(Q_T)}
+ C}\right)^{\frac{1}{2}}\le r,
\end{aligned}\]
that is a contradiction; analogously if $\max\{r^2, r^s\} =r^s$. The
same argument applies if $\ev \neq 0$.  Fix any $r\in(0,r_0)$. The
result above shows that
\[
(u,v)\not= G_\varepsilon (\sigma, f(u^+,v^+)+(1-\sigma), g(u^+,v^+) +(1-\sigma)),
\]
for all $(u,v)\in \partial B_r$ and all $\sigma\in [0,1]$. From the homotopy invariance
of the Leray-Schauder degree, we have
\[
\deg((u,v)-T_\varepsilon(1,u^+,v^+), B_r,
0)=\deg((u,v)-G_\varepsilon(0,f(u^+,v^+)+1,g(u^+,v^+)+1),B_r,0).
\]
The last degree is zero since the equation
\[
(u,v)=G_\varepsilon(0,f(u^+,v^+)+1,g(u^+,v^+)+1)
\]
admits neither trivial nor trivial solution in $B_r$, since $r <
r_0$.
\end{proof}
The next result is our general coexistence result for \eqref{1}.
\begin{Theorem}\label{thm:generale}
Problem \eqref{1} has a $T$-periodic non-negative solution $(u,v)$
with both non-trivial $u,v$.
\end{Theorem}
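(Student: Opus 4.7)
The plan is to compute the Leray–Schauder degree on an annular region of $L^\infty(Q_T)\times L^\infty(Q_T)$ for the compact map $(u,v)\mapsto T_\varepsilon(1,u^+,v^+)$ of Lemma~\ref{compact}, producing a solution of the approximating problem \eqref{2} for each small $\varepsilon>0$, and then to pass to the limit $\varepsilon\to 0$. Fixing $r\in(0,r_0)$ and $\varepsilon\in(0,\varepsilon_0)$, Proposition~\ref{R-moser} provides $\deg((u,v)-T_\varepsilon(1,u^+,v^+),B_R,0)=1$ while Proposition~\ref{r} gives degree $0$ on $B_r$, so excision and additivity yield a fixed point $(\eu,\ev)\in B_R\setminus\overline{B_r}$ of $T_\varepsilon(1,\cdot,\cdot)$. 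By Proposition~\ref{positivity}, $\eu,\ev\ge 0$, so $(\eu,\ev)$ is a non-negative $T$-periodic weak solution of \eqref{2} satisfying $\max\{\|\eu\|_{L^\infty(Q_T)},\|\ev\|_{L^\infty(Q_T)}\}\ge r_0$ and uniformly bounded in $L^\infty(Q_T)\times L^\infty(Q_T)$.

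Next I would combine this uniform $L^\infty$-bound with the Hölder estimates of Ivanov~\cite{i2,i3} (which apply because the operators in \eqref{2} are non-degenerate for each $\varepsilon>0$) and the positive geometric density of $\partial\Omega$ to deduce that the family $\{(\eu,\ev)\}$ is equi-Hölder continuous on $\overline{Q_T}$, uniformly in $\varepsilon$. Ascoli–Arzelà then supplies a subsequence converging in $C(\overline{Q_T})\times C(\overline{Q_T})$ to some non-negative pair $(u,v)$. Using the gradient estimates of Lemma~\ref{stima-gradiente} to extract weak limits of suitable powers of $\nabla\eu$ and $\nabla\ev$, identifying the nonlinear fluxes by a Minty-type monotonicity argument, and handling the nonlocal and lower-order terms via the uniform convergence, one passes to the limit in the weak formulation of \eqref{2} to conclude that $(u,v)$ solves \eqref{1} in the weak sense.

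The hardest step is to show that both $u$ and $v$ are non-trivial in the limit, since $\max\{\|\eu\|_{L^\infty(Q_T)},\|\ev\|_{L^\infty(Q_T)}\}\ge r_0$ still allows one component of the limit to vanish. Here Hypotheses~\ref{ipotesi1}.2 is decisive. Repeating the Picone–Allegretto–Huan computation from the proof of Proposition~\ref{r} on the $u$-equation alone, but estimating the opposite-species nonlocal term via $-K_2\le\uk_2$ and the $L^2$-bound $\|\ev\|_{L^2(Q_T)}^2\le C_2$ of Hypotheses~\ref{ipotesi1}.1 instead of by $\|\ev\|_{L^\infty(Q_T)}\le r$, one arrives at
\[
\frac{1}{T}\Int a(x,t)\,e_p^p(x)\,dxdt-\varepsilon\mu_p-\frac{\uk_2 C_2}{T}\le C'\bigl(\|\eu\|_{L^\infty(Q_T)}^2+\|\eu\|_{L^\infty(Q_T)}^s\bigr)
\]
whenever $\eu>0$, with $C'$ independent of $\varepsilon$. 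The left-hand side is strictly positive for $\varepsilon$ small by Hypotheses~\ref{ipotesi1}.2, which forces a uniform positive lower bound on $\|\eu\|_{L^\infty(Q_T)}$; the symmetric argument on the $v$-equation gives one on $\|\ev\|_{L^\infty(Q_T)}$. The residual case, in which one approximating component vanishes identically so that the Picone test function is unavailable, should be ruled out by the strong maximum principle for the non-degenerate system \eqref{2} combined with a positive subsolution built from the principal eigenfunction $e_p$ (or $e_q$) scaled by a small parameter, whose strict positivity is again guaranteed by Hypotheses~\ref{ipotesi1}.2. The uniform lower bounds then pass to the limit, so both $u$ and $v$ are non-trivial.
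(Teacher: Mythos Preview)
Your overall strategy---degree on an annulus, uniform H\"older bounds, Ascoli--Arzel\`a, and a Minty argument to pass to the limit---matches the paper. The genuine gap is exactly where you flag it: securing that \emph{both} components are non-trivial. Your proposed fix for the ``residual case'' does not work. If $\eu\equiv 0$, then $\eu$ is a legitimate $T$-periodic solution of the first equation of \eqref{2} (the right-hand side vanishes since it carries the factor $\eu^{p-1}$), so no strong maximum principle or subsolution comparison can exclude it: the pair $(0,\ev)$, with $\ev$ solving the decoupled second equation, is a bona fide fixed point of $T_\varepsilon(1,\cdot,\cdot)$ and may well lie in $B_R\setminus\overline{B_r}$. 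The degree being nonzero on the annulus guarantees \emph{some} fixed point there, not that every fixed point has both components positive, and a subsolution for the $u$-equation cannot force an arbitrary fixed point above it.

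The paper circumvents this by a continuum argument in the homotopy parameter $\sigma$. By continuity of the degree, for each $\varepsilon$ there is $\sigma_0<1$ such that the degree of $(u,v)-G_\varepsilon(\sigma,f(u^+,v^+)+(1-\sigma),g(u^+,v^+)+(1-\sigma))$ on $B_R\setminus\overline{B_r}$ equals $1$ for all $\sigma\in[\sigma_0,1]$, hence there is a connected set $\mathcal S_\varepsilon$ of solution triples $(\sigma,\eu,\ev)$ meeting every slice $\{\sigma\}\times(B_R\setminus\overline{B_r})$. The point is that for $\sigma<1$ the additive forcing $(1-\sigma)>0$ makes \emph{both} $\eu$ and $\ev$ strictly positive (Proposition~\ref{positivity}), so the Picone computation is available on each equation separately and yields a uniform lower bound $\lambda_\nu>0$ on \emph{both} $\|\eu\|_{L^\infty}$ and $\|\ev\|_{L^\infty}$, using Hypotheses~\ref{ipotesi1} exactly as you indicate. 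Letting $\sigma\to 1$ along the continuum (and $\nu\to 0$) produces a solution of \eqref{2} with both components bounded below by $\lambda_0>0$, independent of $\varepsilon$. Only then does one send $\varepsilon\to 0$. This $\sigma$-continuum step is the missing idea in your argument; without it, the annulus degree alone cannot separate the two-species branch from the semi-trivial branches $(0,\ev)$ and $(\eu,0)$.

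A minor point: for the weak-limit identification the paper does not use Lemma~\ref{stima-gradiente} but rather tests the equation with $\eu^m$ to bound $\|\nabla\eu^m\|_{L^p(Q_T)}$ uniformly (see the estimate leading to \eqref{costante}); this is what feeds the Minty argument.
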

\begin{proof}
By Propositions~\ref{R-moser} and \ref{r} and the excision property
of the topological degree, there are $R>r>0$, independent of
$\varepsilon$, such that
\[
\deg\big((\eu,\ev)-G_{\varepsilon}(1,f(\eu^+,\ev^+),g(\eu^+,\ev^+)),B_R\setminus\overline{B}_r,0\big)=1,
\]
for any $\varepsilon \in(0,\varepsilon_0)$.

Let us fix any $\varepsilon  \in(0,\varepsilon_0)$. There is
$\sigma_0=\sigma_0(\varepsilon)\in(0,1)$ such that still
\[
\deg\big((\eu,\ev)-G_{\varepsilon}(\sigma,f(\eu^+,\ev^+)+(1-\sigma),g(\eu^+,\ev^+)+(1-\sigma)),B_R\setminus\overline{B}_r,0\big)
=1
\]
for all $\sigma\in[\sigma_0,1]$, by the continuity of Leray-Schauder
degree. This implies that the set of solution triples
$(\sigma,\eu,\ev)\in[0,1]\times(B_R\setminus\overline{B}_r)$ such
that
\begin{equation}\label{eq:sigma}
(\eu,\ev)=G_{\varepsilon\eta}\big(\sigma,f(\eu^+,\ev^+)+(1-\sigma),g(\eu^+,\ev^+)+(1-\sigma)\big)
\end{equation}
contains a continuum $\mathcal S_{\varepsilon}$ with the property that
\[
\mathcal
S_{\varepsilon}\cap\left[\{\sigma\}\times\left(B_R\setminus\overline{B}_r\right)\right]\ne\emptyset
\qquad\text{for all }\sigma\in[\sigma_0,1].
\]
Now, all the pairs $(\eu,\ev)$ such that $(1,\eu,\ev)\in\mathcal
S_{\varepsilon}$ are $T$-periodic solutions of \eqref{2} with
$(\eu,\ev)\neq (0,0)$ and, hence, satisfy
\eqref{eq:ipotesimaggiorazioni}. Since the $L^2$-norm is continuous
with respect to the $L^\infty$-norm and $\mathcal S_{\varepsilon}$ is a
continuum, for every $\nu>0$ there is
$\sigma_\nu\in\left[\sigma_0,1\right)$ such that
\[
\|\eu\|_{L^2(Q_T)}^2\le C_1+\nu\qquad\text{and}\qquad
\|\ev\|_{L^2(Q_T)}^2\le C_2+\nu
\]
for all $(\eu,\ev)$ with $(\sigma,\eu,\ev)\in\mathcal S_{\varepsilon}$
and $\sigma\in[\sigma_\nu,1]$. Observe that, if
$(\sigma,\eu,\ev)\in\mathcal S_{\varepsilon}$ for $\sigma<1$, then
$\eu$ and $\ev$ are {\em positive} solutions of \eqref{eq:sigma}.
 Moreover, if $\nu$ is sufficiently small, then we still have
$\theta(C_1+\nu,C_2+\nu)>0$.

Now, setting
\[
\begin{aligned}
K_p:=&\left[\|K_1\|_{L^1(Q_T)}+ \displaystyle p
\|e_p^{p-1}\|_{L^\infty(\Omega)}\|\nabla e_p\|_{L^\infty(\Omega)}
 \left(\frac{m(p-1)M_1}{(p-1)(m-1)-s} \right)^{p-1}
|Q_T|^{\frac{1}{p}}\right] \\
K_q:=&\left[\|K_4\|_{L^1(Q_T)}+ \displaystyle q
\|e_q^{q-1}\|_{L^\infty(\Omega)}\|\nabla e_q\|_{L^\infty(\Omega)}
 \left(\frac{n(q-1)M_2}{(q-1)(n-1)-s} \right)^{q-1}
|Q_T|^{\frac{1}{q}}\right],
\end{aligned}
\]
we can prove that, if $\nu$ is sufficiently small, then
\[
\begin{aligned}
\|\eu\|_{L^\infty(Q_T)},\|\ev\|_{L^\infty(Q_T)}\ge \min&\left\{
\left(\frac{T\theta(C_1+\nu,C_2+\nu)}{K_p}\right)^{\frac{1}{2}},
\left(\frac{T\theta(C_1+\nu,C_2+\nu)}{K_p}\right)^{\frac{1}{s}}
\right.\\
&\left.\left(\frac{T\theta(C_1+\nu,C_2+\nu)}{K_q}\right)^{\frac{1}{2}},
\left(\frac{T\theta(C_1+\nu,C_2+\nu)}{K_q}\right)^{\frac{1}{s}}
\right\}=:\lambda_\nu
\end{aligned}
\]
for all $\eu,\ev$ such that $(\sigma,\eu,\ev)\in\mathcal
S_{\varepsilon}$ and $\sigma\in\left[\sigma_\nu,1\right)$.
 Indeed, let $(\eu,\ev)$
be a solution of \eqref{eq:sigma}. Arguing by contradiction, assume
that $\|\eu\|_{L^\infty(Q_T)}<\lambda_\nu$ and proceeding as in the
proof of Proposition~\ref{r} (recall that $\eu>0$ since $(\eu,\ev)$
solves \eqref{eq:sigma} with $\sigma<1$) we obtain the inequality
\[
\begin{aligned}
\iint_{Q_T}e_p^p(x)a(x,t)dxdt-\varepsilon_0\mu_pT< \max\{\lambda_\nu^2,
\lambda_\nu^s\} K_p +\uk_2(C_2+\nu).
\end{aligned}
\]
 Thus, if $\max\{\lambda_\nu^2, \lambda_\nu^s\} =\lambda_\nu^2$, using the definition of $\theta$,
one has
\[
T\theta(C_1+\nu,C_2+\nu)\le\iint_{Q_T}e_p^p(x)a(x,t)dxdt-\varepsilon_0\mu_pT-\uk_2(C_2+\nu)<
\lambda_\nu^2K_p,
\]
that is
\[
\left(\frac{T\theta(C_1+\nu,C_2+\nu)}{K_p}\right)^{\frac{1}{2}}
<\lambda_\nu
\]
 which is a contradiction with the definition of
$\lambda_\nu$; analogously if $\max\{\lambda_\nu^2, \lambda_\nu^s\}
=\lambda_\nu^s$. The same argument shows that
$\|\ev\|_{L^\infty(Q_T)}\ge\lambda_\nu$.

Now, if we let $\sigma\to 1$ and $\nu\to 0$, we obtain that
\eqref{2} has at least a solution $(\eu,\ev)$ such that
$\|\eu\|_{L^\infty(Q_T)},\|\ev\|_{L^\infty(Q_T)}\ge\lambda_0$, since
$\mathcal S_{\varepsilon}$ is a continuum and
$\lambda_\nu\to\lambda_0$ as $\nu\to 0$.

Finally, we show that a solution $(u,v)$ of \eqref{1} with both
non-trivial $u,v\ge 0$ is obtained as a limit of $(\eu,\ev)$ as
$\varepsilon\to 0$, since $\lambda_0$ is independent of
$\varepsilon$.

Since $\eu,\ev$ are H\"{o}lder continuous in $\overline{Q}_T$,
bounded in $C(\overline{Q}_T)$ uniformly in $\varepsilon>0$ and the
structure conditions of \cite{i2} and \cite{i3} are satisfied for
the equations of system \eqref{2}, whenever $\varepsilon \in\left(0,
\varepsilon_0\right)$, \cite[Theorem 1.1]{i2} and \cite[Theorem
1.3]{i3} apply to conclude that the inequality
\[
|\eu (x_1, t_1) - \eu  (x_2, t_2))| \le \Gamma
(|x_1-x_2|^\beta+|t_1- t_2|^{\frac{\beta}{p}})
\]
holds for any $(x_1,t_1),(x_2,t_2)\in\overline{Q}_T$, where the
constants $\Gamma >0$ and $\beta\in(0,1)$ are independent of
$\|\eu\|_{L^\infty(Q_T)}$. The same inequality holds for $\ev$.
Therefore, by the Ascoli-Arzel\`{a} Theorem, a subsequence of
$(\eu,\ev)$ converges uniformly in $\overline{Q}_T$ to a pair
$(u,v)$ satisfying
\[
\lambda_0\le \|u\|_{L^\infty(Q_T)},\|v\|_{L^\infty(Q_T)} \le R.
\]
 Moreover, from \eqref{5} we have that $\eu$ satisfies the inequality
\begin{equation}\label{13}
\dfrac{\partial \eu  }{\partial t}-\varepsilon\div(|\nabla \eu
|^{p-2}\nabla \eu)- \div(|\nabla \eu^m |^{p-2}\nabla \eu^m) \le
K\eu^{p-1},\quad\text{in } \; Q_T,
\end{equation}
where $K$ is a positive constant independent of $\varepsilon$.
Multiplying \eqref{13} by $\eu^m$, integrating over $Q_T$ and
passing to the limit in the Steklov averages $(\eu)_h$, one has
\begin{equation}\label{costante}
\begin{aligned}
 \iint_{Q_T}
|\nabla \eu^m|^{p}dxdt& \le
 \varepsilon m\Int u^{m-1}|\nabla \eu|^p dxdt + \Int |\nabla \eu ^m|^p dxdt  \\
&= \varepsilon \Int |\nabla \eu|^{p-2}\nabla \eu \nabla \eu^m dxdt + \Int
\nabla \eu ^m \nabla \eu^m |\nabla \eu^m|^{p-2}dxdt
\\
&\le K \iint_{Q_T}\eu^{p+m-1}dxdt \le M,
\end{aligned}\end{equation}
by the $T$-periodicity of $\eu$, its non-negativity and its
boundedness in $L^{\infty}(Q_T)$. Here $M$ is positive and
independent of $\varepsilon$. An analogous estimate holds for $\ev$.
Thus the sequences $\eu^m, \ev^n$  are uniformly bounded in
$L^{p}\big(0,T;W^{1,p}_0(\Omega)\big)$ and in
$L^{q}\big(0,T;W^{1,q}_0(\Omega)\big)$, respectively. Thus, up to
subsequence if necessary, $(\eu^m,\ev^n)$ converges weakly in
$L^{p}\big(0,T;W^{1,p}_0(\Omega)\big)\times
L^{q}\big(0,T;W^{1,q}_0(\Omega)\big)$ and strongly in
$C(\overline{Q}_T)\times C(\overline{Q}_T)$ to $(u^m,v^n)$. In
particular $(u^m,v^n)\in L^{p}\big(0,T;W^{1,p}_0(\Omega)\big) \times
L^{q}\big(0,T;W^{1, q}_0(\Omega)\big)$. We finally claim that the
pair $(u,v)$ satisfies the identities
\[
\begin{aligned}
0=&\iint_{Q_T}\left\{-u\frac{\partial\varphi}{\partial t}
+|\nabla u^m|^{p-2}\nabla u^m\cdot\nabla\varphi-au^{p-1}\varphi\right.\\
&\left.+u^{p-1}\varphi\int_{\Omega}[K_1(\xi,t)u^2(\xi,t-\tau_1)
-K_2(\xi,t)v^2(\xi,t-\tau_2)]d\xi\right\}dxdt
\end{aligned}
\]
and
\[
\begin{aligned}
0=&\iint_{Q_T}\left\{-v\frac{\partial\varphi}{\partial t}
+|\nabla v^n|^{q-2}\nabla v^n\cdot\nabla\varphi-bv^{q-1}\varphi\right.\\
&\left.+v^{q-1}\varphi\int_{\Omega}[-K_3(\xi,t)u^2(\xi,t-\tau_3)
+K_4(\xi,t)v^2(\xi,t-\tau_4)]d\xi\right\}dxdt,
\end{aligned}
\]
for any $\varphi\in C^1(\overline{Q}_T)$ such that
$\varphi(x,T)=\varphi(x,0)$ for any $x\in\Omega$ and
$\varphi(x,t)=0$ for any $(x,t)\in
\partial\Omega\times[0,T]$, that is $(u,v)$ is a generalized
solution of \eqref{1}. The approach for doing this is standard, in
the sequel we write it in detail for the reader's convenience. First
of all, observe that
\begin{equation}\label{limite1}
\lim_{\varepsilon \rightarrow 0} \varepsilon\Int |\nabla \eu
|^{p-2}\nabla \eu\nabla \varphi dxdt =0
\end{equation}
for all test functions $\varphi$. In fact, multiplying the equation
\[
\begin{aligned}
\dfrac{\partial \eu  }{\partial t}&-\varepsilon\div(|\nabla \eu
|^{p-2}\nabla \eu)- \div(|\nabla \eu^m |^{p-2}\nabla \eu^m)
=\left (a(x,t)- \int_{\Omega}K_1(\xi,t)\eu  ^2(\xi, t-\tau_1)d\xi +\right.\\
&\left. + \int_{\Omega}K_2(\xi,t)\ev   ^2(\xi,
t-\tau_2)d\xi\right)\eu^{p-1}
\end{aligned}
\]
by $\eu$, integrating over $Q_T$, using the $T$-periodicity of $\eu$
and its non-negativity and passing, as $h\to0,$ to the limit in the Steklov
averages $(\eu)_h$, we obtain
\[
\begin{aligned}
\|\sqrt[p]{\varepsilon}\nabla \eu\|_{L^p(Q_T)}^p &= \varepsilon \Int
|\nabla \eu|^p dxdt \le \varepsilon \Int |\nabla \eu|^p dxdt+ m^{p-1}\Int
\eu^{(m-1)(p-1)}|\nabla \eu|^p dxdt \\
&\le \varepsilon \Int |\nabla \eu|^p dxdt + \Int|\nabla \eu^m|^{p-2}  \nabla
\eu^m \nabla \eu dxdt \le C,
\end{aligned}
\]
where $C:= (\|a\|_{L^\infty (Q_T)} + \|K_2\|_{L^\infty(Q_T)}R^2)
|Q_T|^{1-\frac{p}{2}}C_1^{\frac{p}{2}}$ (recall that, by assumption,
being $p<2$, $\|\eu\|_{L^p(Q_T)}\le
|Q_T|^{\frac{1}{p}-\frac{1}{2}}\|\eu\|_{L^2(Q_T)}\le |Q_T|^{\frac{1}{p}-\frac{1}{2}}\sqrt{C_1}$).
Thus, by the H\"{o}lder inequality,
\[
\begin{aligned}
 \left|\varepsilon\Int | \nabla \eu |^{p-2}\nabla \eu \nabla \varphi
dxdt\right|  & \le \Int \varepsilon^{\frac{1}{p'}}|\nabla \eu|^{p-1}
\varepsilon^{\frac{1}{p}}|\nabla \varphi| dxdt\\& \le
\|\sqrt[p]{\varepsilon} \nabla \eu\|_{L^p(Q_T)}^{\frac{p}{p'}}
\varepsilon^{\frac{1}{p}}\|\nabla \varphi\|_{L^p(Q_T)} \le
\sqrt[p]{\varepsilon} \sqrt[p']{C }\|\nabla
\varphi\|_{L^p(Q_T)}\rightarrow 0
\end{aligned}
\]
as $\varepsilon \rightarrow 0$, for all test functions $\varphi$.

In what follows we will prove that
\begin{equation}\label{limite2}
\lim_{\varepsilon \rightarrow 0}\displaystyle \Int |\nabla
\eu^m|^{p-2}\nabla \eu^m\cdot\nabla\varphi dxdt= \Int |\nabla
u^m|^{p-2}\nabla u^m\cdot\nabla\varphi dxdt,
\end{equation}
for all test functions $\varphi$. To this aim, observe that
$\displaystyle |\nabla \eu ^m|^{p-2}\nabla \eu^m $ is bounded in
$\left(L^{\frac{p}{p-1}}(Q_T)\right)^N$. In fact,
\[
\Int \left||\nabla \eu ^m|^{p-2}\nabla \eu^m \right|^{\frac{p}{p-1}} dxdt
= \Int |\nabla \eu^m |^p dxdt \le M,
\]
as proved in \eqref{costante}. Thus there exists
 $H \in \big(L^{\frac{p}{p-1}}(Q_T)\big)^N$ such
that $|\nabla \eu ^m|^{p-2}\nabla \eu^m$ weakly converges to $H$ in
$\big(L^{\frac{p}{p-1}}(Q_T)\big)^N$ as $\varepsilon \rightarrow 0$.
Now, using \eqref{limite1}, it is easy to prove that
\begin{equation}\label{densita}
\begin{aligned}
0= &\iint_{Q_T}\left\{-u\frac{\partial\varphi}{\partial t}
 +H\cdot\nabla\varphi-au^{p-1}\varphi\right.\\
&\left.+u^{p-1}\varphi\int_{\Omega}[K_1(\xi,t)u^2(\xi,t-\tau_1)
-K_2(\xi,t)v^2(\xi,t-\tau_2)]d\xi\right\}dxdt
\end{aligned}
\end{equation}
for any $\varphi\in C^1(\overline{Q}_T)$ such that
$\varphi(x,T)=\varphi(x,0)$ for any $x\in\Omega$ and
$\varphi(x,t)=0$ for any $(x,t)\in\partial\Omega\times[0,T]$ (and,
by density, for any $T$-periodic $\varphi\in
L^p\big(0,T;W^{1,p}_0(\Omega)\big)\cap C(\overline{Q_T}))$. For this
it remains to prove that for every $\varphi\in C^1(\overline{Q}_T)$
\begin{equation}\label{uguaglianza}
\iint_{Q_T}|\nabla u^m|^{p-2}\nabla u^m \cdot\nabla \varphi dxdt=
\iint_{Q_T}H \cdot\nabla \varphi dxdt.
\end{equation}
Consider the vector function $H(Y):=|Y|^{p-2}Y$. Then
\[
H'(Y)=|Y|^{p-2}I+(p-2)|Y|^{p-4}Y\otimes Y
\]
is a positive definite matrix and, taken $w\in
L^p\big(0,T;W^{1,p}_0(\Omega)\big)$, there exists a vector $Y$ such
that
\[
\begin{aligned}
0\le &\langle H'(Y)(\nabla \eu^m -\nabla w),\nabla \eu^m -\nabla w\rangle\\
=&\langle H(\nabla \eu^m )-H(\nabla w),\nabla \eu^m -\nabla
w\rangle.
\end{aligned}
\]
The previous inequality implies
\[
\iint_{Q_T}\left\{ |\nabla \eu ^m|^{p-2}\nabla \eu^m-|\nabla
w|^{p-2}\nabla w \right\} \cdot\nabla(\eu^m -w)dxdt\ge0,
\]
for all $w\in L^p\big(0,T;W^{1, p}_0(\Omega)\big)$, that is
\[
\iint_{Q_T} |\nabla \eu ^m|^p dxdt- \iint_{Q_T} |\nabla \eu ^m|^{p-2}
\nabla \eu^m\cdot\nabla w dxdt- \Int |\nabla w|^{p-2}\nabla w
\cdot\nabla(\eu^m -w)dxdt\ge0,
\]
for all $w\in L^p\big(0,T;W^{1, p}_0(\Omega)\big)$. As in
\eqref{costante}, one has
\[
\begin{aligned}
 \iint_{Q_T}
|\nabla \eu^m|^{p}dxdt& \le \varepsilon m\Int u^{m-1}|\nabla \eu|^p dxdt+ \Int
|\nabla \eu ^m|^p dxdt
\\
&\le
\iint_{Q_T}\left[a-\int_{\Omega}K_1(\xi,t)\eu^2(\xi,t-\tau_1)d\xi+\int_{\Omega}K_2(\xi,t)\ev^2(\xi,t-\tau_2)d\xi\right]
\eu^{p+m-1}dxdt.
\end{aligned}
\]
Thus, from the previous two inequalities, we obtain
\[
\begin{aligned}
& \iint_{Q_T} |\nabla \eu ^m|^{p-2}\nabla \eu^m \cdot\nabla w dxdt+
\Int|\nabla w|^{p-2}\nabla w  \cdot\nabla(\eu^m -w) dxdt \\
&\le
\iint_{Q_T}\left[a-\int_{\Omega}K_1(\xi,t)\eu^2(\xi,t-\tau_1)d\xi+\int_{\Omega}K_2(\xi,t)\ev^2(\xi,t-\tau_2)d\xi\right]
\eu^{p+m-1}dxdt.
\end{aligned}
\]
Letting $\varepsilon\rightarrow 0$ and using \eqref{costante}, we
have
\begin{equation}\label{weak}
\begin{aligned}
&\iint_{Q_T}\left[H\cdot\nabla w
 +|\nabla w|^{p-2}\nabla w\cdot\nabla(u^m-w)\right]dxdt\\
\le &\iint_{Q_T}\left[a-\int_{\Omega}K_1(\xi,t)u^2(\xi,t-\tau_1)d\xi
 +\int_{\Omega}K_2(\xi,t)v^2(\xi,t-\tau_2)d\xi\right]u^{p+m-1}dxdt.
\end{aligned}
\end{equation}
(Observe that, being $p>1$, $\nabla \eu^m$ is also bounded in
$L^1(Q_T$.)

On the other hand, by density we can take $u^m= \varphi$ in
\eqref{densita} and obtain
\[
\begin{aligned}
&\iint_{Q_T}H\cdot\nabla u^m dxdt\\
=&\iint_{Q_T}\left[a-\int_{\Omega}K_1(\xi,t)u^2(\xi, t-\tau_1)d\xi
+\int_{\Omega}K_2(\xi,t)v^2(\xi, t-\tau_2)d\xi\right]u^{p+m-1}dxdt.
\end{aligned}
\]
This equality together with \eqref{weak} implies
\begin{equation}\label{h}
0\le \iint_{Q_T}(H -|\nabla w|^{p-2}\nabla w)\cdot \nabla(u^m-w) dxdt.
\end{equation}
Taking $w:= u^m - \lambda \varphi$, with $\lambda >0$ and
$\varphi\in C^1(\overline{Q}_T)$, we get
\[
0\le \iint_{Q_T}(H -|\nabla (u^m-\lambda \varphi)|^{p-2}\nabla
(u^m-\lambda \varphi))\cdot \nabla\varphi dxdt.
\]
Letting $\lambda \rightarrow 0$ yields
\[
0\le \iint_{Q_T}(H -|\nabla u^m|^{p-2}\nabla u^m)\cdot\nabla\varphi dxdt.
\]
If in \eqref{h} we take $w:= u^m + \lambda \varphi$, with
$\lambda>0$, $\varphi \in C^1(\overline{Q}_T)$ and letting again
$\lambda\rightarrow 0$, then
\[
\iint_{Q_T}(H -|\nabla u^m|^{p-2}\nabla u^m)\cdot \nabla\varphi dxdt\le
0.
\]
Thus \eqref{uguaglianza} holds and \eqref{limite2} is proved.
\end{proof}
Obviously, the previous result holds also for a single equation. In
particular, we have the following corollary:
\begin{Corollary}\label{cor} Consider the problem
\begin{equation}\label{equazione}
 \begin{cases}
u_t-{\rm div}(|\nabla u^m|^{p-2}\nabla
u^m)=\left(a(x,t)-\!\int_{\Omega}\!K(\xi,t)u^2(\xi, t-\tau)d\xi\right)u^{p-1}, & \text{in } Q_T,\\
u(x, t) = 0,& \text{for }(x,t) \in  \partial \Omega \times (0,T), \\
u(\cdot,0)= u(\cdot,T),
\end{cases}
\end{equation}
and assume that
\begin{enumerate}
\item the exponents $p, m$  are such that $p \in (1,2)$ and $m
>p$,
\item the delay $\tau\in(0,+\infty)$,
\item the functions $a$ and $K$ belong to $L^\infty(Q_T)$, are
extended to $\Omega\times\R$ by $T$-periodicity and are non-negative
for a.a. $(x,t)\in Q_T$,
\item
 there exists a positive constant $C$
such that
 for all $\varepsilon>0$ and all the non-negative solutions $\eu$ of
\[
u=G_{\varepsilon} \big(1, f(u^+)\big),
\]
it results \[ \|\eu \|_{L^2(Q_T)}^2\le C.
\]
\end{enumerate}
Then problem \eqref{equazione} has a $T$-periodic non-negative and
non-trivial solution.
\end{Corollary}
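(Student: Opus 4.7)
The plan is to derive this statement as a special case of Theorem~\ref{thm:generale} by embedding the scalar problem~\eqref{equazione} into a completely decoupled, diagonal version of the system~\eqref{1}. More precisely, I would set $q:=p$, $n:=m$, $b:=a$, $K_1:=K_4:=K$, $K_2\equiv K_3\equiv 0$, $\tau_1:=\tau_4:=\tau$, and choose any positive $\tau_2,\tau_3$. Both equations of the resulting system then coincide with~\eqref{equazione} and do not interact with each other.

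The first step is to check Hypotheses~\ref{ipotesi} and~\ref{ipotesi1} for this embedded system. Items 1--3 of Hypotheses~\ref{ipotesi} follow immediately from assumptions 1--3 of the corollary, with $\uk_2=\uk_3=\ok_2=\ok_3=0$. Item 1 of Hypotheses~\ref{ipotesi1} is inherited from assumption 4 of the corollary applied to either component, setting $C_1:=C_2:=C$. Because $\uk_2=\uk_3=0$, item 2 of Hypotheses~\ref{ipotesi1} reduces to the single inequality
\[
\frac{1}{T}\iint_{Q_T}a(x,t)e_p^p(x)\,dx\,dt>0,
\]
which holds provided $a$ is not identically zero on $Q_T$. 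This positivity of $a$ on a set of positive measure is implicit in the corollary: if $a\equiv 0$, the only $T$-periodic non-negative solution is $u\equiv 0$, so the conclusion would fail.

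Once the hypotheses are in place, Theorem~\ref{thm:generale} produces a $T$-periodic non-negative solution pair $(u,v)$ of the embedded system, with both $u$ and $v$ non-trivial. Since the two equations are decoupled copies of~\eqref{equazione}, the first component $u$ is already a $T$-periodic, non-negative, non-trivial weak solution of the scalar equation, which is exactly the conclusion of the corollary. An alternative and equally viable route is to re-run the entire argument of Section~\ref{regularizedproblem}---from Lemma~\ref{compact} through Propositions~\ref{positivity},~\ref{R-moser},~\ref{r} and Lemmas~\ref{limitatezza},~\ref{stima-gradiente}---directly on the scalar regularized equation, since every step becomes strictly simpler because the $v$-contribution and the cross-terms involving $K_2,K_3$ vanish identically. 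The main technical point, as already in Theorem~\ref{thm:generale}, remains the Picone-type lower bound on $\|u_\varepsilon\|_{L^\infty(Q_T)}$ uniformly away from zero, which relies on the first-eigenvalue condition displayed above and enables the passage to the limit $\varepsilon\to 0$ via the Ascoli--Arzel\`a theorem combined with the Minty--Browder monotonicity trick used to identify the weak limit of $|\nabla u_\varepsilon^m|^{p-2}\nabla u_\varepsilon^m$.
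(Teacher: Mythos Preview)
Your proposal is correct and matches the paper's approach: the paper gives no explicit proof for this corollary beyond the remark that Theorem~\ref{thm:generale} ``obviously'' holds for a single equation, and both of your routes---embedding the scalar problem as a decoupled diagonal system with $K_2\equiv K_3\equiv 0$, or rerunning the scalar version of the Section~\ref{regularizedproblem} argument---are legitimate ways to make that remark precise. Your observation that the nontriviality of $a$ (equivalently, $\iint_{Q_T} a\,e_p^p>0$) is an implicit hypothesis is accurate and worth noting, since without it Hypothesis~\ref{ipotesi1}.2 fails and no nontrivial solution can exist.
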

Here $G_{\varepsilon} \big(1, f(u^+)\big)$ is defined as before.

\section{A priori bounds in $L^2(Q_T)$} \label{sec:aprioribounds}

In this section we apply Theorem~\ref{thm:generale} by looking for
explicit a priori bounds in $L^2(Q_T)$ for the solutions of the
approximating problem \eqref{2} in different situations.
More precisely, under different assumptions on the kernels $K_i,
i=1,2,3,4$ which model the interactions between the quantities
$u,v$, we determine the constants $C_1, C_2$ of
(\ref{eq:ipotesimaggiorazioni}) in an explicit form.
For this we consider two main different cases.
In the first one, which we call the ``coercive case'', we assume that $K_i(x,t)\ge \uk_i>0$ a.a. in
$Q_T$ for $i=1,4$.
In the second one, the ``non-coercive case'', we allow the non-negative functions $K_1,K_4$ to vanish on sets with
positive measure. We distinguish also between cooperative and
competitive situations by imposing sign conditions on $K_2,K_3$
having in mind the biological interpretation of model \eqref{1}.

\subsection{The coercive case}\label{subsec:coercive}

\begin{Theorem}\label{thm:coercivo}
Assume that
\begin{enumerate}
\item Hypotheses $\ref{ipotesi}$ are satisfied
\item
there are constants $\uk_i >0$, $i=1,4$,  such that
\[
K_i(x,t)\ge \uk_i \text{ for }i=1,4,
\]
for a.a. $(x,t)\in Q_T$, and $\uk_1\uk_4>\ok_2\ok_3$, where
$\ok_2,\ok_3$ are as in Hypothesis $\ref{ipotesi}$
\item Hypothesis $\ref{ipotesi1}.2$ is satisfied with
\begin{equation}\label{lecostanti}
\begin{aligned}
C_1= &\dfrac{T (\uk_4\|a\|_{L^\infty(Q_T)}
+\ok_2\|b\|_{L^\infty(Q_T)})}{\uk_1\uk_4-\ok_2\ok_3}\\
C_2= &\dfrac{T (\ok_3\|a\|_{L^\infty(Q_T)} +
\uk_1\|b\|_{L^\infty(Q_T)})} {\uk_1\uk_4-\ok_2\ok_3}.
\end{aligned}
\end{equation}
\end{enumerate}
Then problem \eqref{1} has a non-negative $T$-periodic solution
$(u,v)$ with non-trivial $u,v$.
\end{Theorem}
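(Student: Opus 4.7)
The strategy is to verify Hypothesis~\ref{ipotesi1}.1 with the explicit constants $C_1,C_2$ displayed in~\eqref{lecostanti}; since Hypothesis~\ref{ipotesi1}.2 is exactly assumption~(3) of the theorem, Theorem~\ref{thm:generale} will then immediately supply the claimed non-trivial, non-negative $T$-periodic coexistence solution of~\eqref{1}.

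Fix any solution pair $(u_\varepsilon,v_\varepsilon)$ of $(u,v)=G_\varepsilon(1,f(u^+,v^+),g(u^+,v^+))$. By Proposition~\ref{positivity} both components are non-negative, so $(u_\varepsilon,v_\varepsilon)$ solves~\eqref{2}. I will test the first equation of~\eqref{2} with $u_\varepsilon^{3-p}$ (a legitimate test function for $p\in(1,2)$, after passing through the Steklov averages as in Proposition~\ref{positivity}), integrate over $Q_T$, eliminate the time-derivative contribution $\tfrac{1}{4-p}\int_\Omega u_\varepsilon^{4-p}\,dx\big|_0^T$ by $T$-periodicity, and drop the two non-negative diffusion integrals. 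This produces
\begin{equation*}
\iint_{Q_T}\!u_\varepsilon^{2}(x,t)\!\int_\Omega\!K_1(\xi,t)u_\varepsilon^2(\xi,t-\tau_1)\,d\xi\,dxdt
\le\iint_{Q_T}\!a\,u_\varepsilon^{2}\,dxdt+\iint_{Q_T}\!u_\varepsilon^{2}(x,t)\!\int_\Omega\!K_2(\xi,t)v_\varepsilon^2(\xi,t-\tau_2)\,d\xi\,dxdt.
\end{equation*}
Using Fubini, the coercivity $K_1\ge\uk_1$, the upper bound $K_2\le\ok_2$, the $T$-periodic substitution $s=t-\tau_1$ to untangle the delay on the left, and $\|a\|_{L^\infty(Q_T)}$ on the right, I expect to arrive at
\[
\uk_1\|u_\varepsilon\|_{L^2(Q_T)}^2-\ok_2\|v_\varepsilon\|_{L^2(Q_T)}^2\le T\|a\|_{L^\infty(Q_T)}.
\]
A symmetric computation on the second equation, tested with $v_\varepsilon^{3-q}$ and using $K_4\ge\uk_4$ together with $K_3\le\ok_3$, will yield the twin inequality $-\ok_3\|u_\varepsilon\|_{L^2(Q_T)}^2+\uk_4\|v_\varepsilon\|_{L^2(Q_T)}^2\le T\|b\|_{L^\infty(Q_T)}$.

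The condition $\uk_1\uk_4>\ok_2\ok_3$ of assumption~(2) makes the coefficient matrix $\bigl(\begin{smallmatrix}\uk_1&-\ok_2\\-\ok_3&\uk_4\end{smallmatrix}\bigr)$ a non-singular $M$-matrix whose inverse $\tfrac{1}{\uk_1\uk_4-\ok_2\ok_3}\bigl(\begin{smallmatrix}\uk_4&\ok_2\\ \ok_3&\uk_1\end{smallmatrix}\bigr)$ has non-negative entries; solving the coupled system of inequalities therefore delivers $\|u_\varepsilon\|_{L^2(Q_T)}^2\le C_1$ and $\|v_\varepsilon\|_{L^2(Q_T)}^2\le C_2$ with precisely the constants in~\eqref{lecostanti}, verifying Hypothesis~\ref{ipotesi1}.1 and allowing Theorem~\ref{thm:generale} to conclude.

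The main obstacle I foresee is the passage from the delayed space-time integral appearing on the left-hand side of the displayed inequality to the clean quantity $\uk_1\|u_\varepsilon\|_{L^2(Q_T)}^2$: after applying $K_1\ge\uk_1$, Fubini, and the substitution $s=t-\tau_1$, what naturally arises is an autocorrelation of the form $\int_0^T\|u_\varepsilon(\cdot,t)\|_{L^2(\Omega)}^2\,\|u_\varepsilon(\cdot,t-\tau_1)\|_{L^2(\Omega)}^2\,dt$, which is not immediately comparable from below to $\|u_\varepsilon\|_{L^2(Q_T)}^2$. Linearising this term --- presumably through a Young-type inequality that exploits the positivity of both factors and the common $T$-periodicity inherited from the fixed-point equation --- is exactly where the specific linear structure of the bounds in~\eqref{lecostanti} must arise.
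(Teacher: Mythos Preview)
Your approach has a genuine gap, and you have correctly located it yourself: testing with $u_\varepsilon^{3-p}$ produces the wrong homogeneity. After dropping the diffusion terms you obtain
\[
\uk_1\int_0^T\!\Big(\int_\Omega u_\varepsilon^2(x,t)\,dx\Big)\Big(\int_\Omega u_\varepsilon^2(\xi,t-\tau_1)\,d\xi\Big)dt
\le\|a\|_{L^\infty}\|u_\varepsilon\|_{L^2(Q_T)}^2
+\ok_2\int_0^T\!\Big(\int_\Omega u_\varepsilon^2\,dx\Big)\Big(\int_\Omega v_\varepsilon^2\,d\xi\Big)dt,
\]
which is quartic on the left and quadratic on the right in $u_\varepsilon$; the inequality you ``expect'' is quadratic on the left and of order zero on the right. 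No Young-type argument recovers the missing factor: the autocorrelation $\int_0^T\phi(t)\phi(t-\tau_1)\,dt$ with $\phi(t)=\|u_\varepsilon(\cdot,t)\|_{L^2(\Omega)}^2$ is in general only comparable to $\int_0^T\phi(t)^2\,dt$, not to $\int_0^T\phi(t)\,dt$, and the $a$-term on the right is $\|a\|_\infty\|u_\varepsilon\|_{L^2(Q_T)}^2$, not $T\|a\|_\infty$.

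The idea that is missing from your argument is to \emph{divide out the spatial factor before integrating in time}. In the paper one tests the first equation with $u_\varepsilon$ (not $u_\varepsilon^{3-p}$), so the right-hand side becomes
\[
\Big[\|a\|_{L^\infty}-\int_\Omega K_1(\xi,t)u_\varepsilon^2(\xi,t-\tau_1)\,d\xi+\int_\Omega K_2(\xi,t)v_\varepsilon^2(\xi,t-\tau_2)\,d\xi\Big]\int_\Omega u_\varepsilon^p(x,t)\,dx,
\]
where the bracket depends only on $t$. Dividing the whole inequality by $\int_\Omega u_\varepsilon^p\,dx>0$ and integrating over $[0,T]$ gives directly
\[
0<T\|a\|_{L^\infty}-\uk_1\|u_\varepsilon\|_{L^2(Q_T)}^2+\ok_2\|v_\varepsilon\|_{L^2(Q_T)}^2,
\]
since the divided left-hand side is still non-negative (the gradient terms survive, and the $T$-periodicity handles the time derivative). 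The companion inequality for $v_\varepsilon$ follows the same way, and your $M$-matrix argument then applies verbatim to produce the constants~\eqref{lecostanti}.
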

\begin{proof}
We just need to show that $\|\eu \|_{L^2(Q_T)}^2\le C_1$ and $\|\ev
\|_{L^2(Q_T)}^2\le C_2$ for any solution $(\eu ,\ev  )$ of
\eqref{eq:rhovarepsilon}. Then, assume $\eu \neq0$, thus $\eu  >0$ and
$\ev   \ge 0$ in $Q_T$ by Proposition~\ref{positivity}.
Multiplying the inequality
\[
\begin{aligned}
\dfrac{ \partial \eu }{ \partial t } - \varepsilon & \div( | \nabla \eu |^{p-2} \nabla \eu) - \div( | \nabla \eu^m |^{ p - 2 } \nabla \eu^m ) \\
\le & \left[ \|a\|_{ L^{\infty}( Q_{T} ) } - \int_{ \Omega } K_1( \xi, t ) \eu^{2}( \xi, t - \tau_{1} ) d\xi + \int_{ \Omega } K_2 ( \xi, t ) \ev^2( \xi, t - \tau_2 ) d\xi \right] \eu^{p-1}
\end{aligned}
\]
by $\eu $, integrating over $\Omega$ and using the Steklov averages
$(\eu )_h \in H^1(Q_{T- \delta})$, $\delta,h>0$, we obtain
\[
\begin{aligned}
& \frac{ \frac{1}{2} \frac{d}{dt} \int_{\Omega} ( \eu )_{h}^{2} dx + \varepsilon \int_{\Omega}| \nabla (\eu )_{h}|^{p} dx
+ m^{ p - 1 } \int_{\Omega} ( \eu )_{h}^{ (m-1) (p-1) } | \nabla (\eu  )_{h} |^{p} dx }{ \int_{\Omega} ( \eu )_{h}^{p} dx }\\
& \hphantom{HHHHH}\le \left(\|a\|_{L^\infty(Q_T)}-\int_{\Omega}K_1(\xi,t)(\eu)_h^2(\xi,t-\tau_1)d\xi
 +\int_{\Omega}K_2(\xi,t)(\ev  )_h^2(\xi,t-\tau_2)d\xi\right).
\end{aligned}
\]
Integrating the previous inequality over $[0, T]$, and passing to
the limit as $h\rightarrow 0$, by the $T$-periodicity of $\eu $, we
have that
\begin{equation}\label{equazione2}
0 < \left( T \|a\|_{ L^\infty( Q_{T} ) } - \uk_1 \| \eu \|_{ L^2( Q_{T} ) }^{2} + \ok_{2} \| \ev \|_{ L^2( Q_{T} ) }^{2}\right).
\end{equation}
The same procedure, when it is applied to the second
equation of \eqref{2}, leads to
\begin{equation}\label{equazione2bis}
0 < \left(T\|b\|_{L^\infty(Q_T)}-\uk_4\|\ev
\|_{L^2(Q_T)}^2+ \ok_3\|\eu \|_{L^2(Q_T)}^2\right).
\end{equation}
Hence, if $\eu \not\equiv 0$ and if $\ev \not\equiv
0$, by the positiveness of the right hand sides of \eqref{equazione2}
and \eqref{equazione2bis}, we have
\[
\begin{aligned}
\left(1-\dfrac{\ok_2\ok_3}{\uk_1\uk_4}\right)\|\eu\|_{L^2(Q_T)}^2
&<
\dfrac{T}{\uk_1}\left(\|a\|_{L^\infty(Q_T)}+\dfrac{\ok_2}{\uk_4}\|b\|_{L^\infty(Q_T)}\right)\\
\left(1-\dfrac{\ok_2\ok_3}{\uk_1\uk_4}\right)\|\ev\|_{L^2(Q_T)}^2
&<
\dfrac{T}{\uk_4}\left(\|b\|_{L^\infty(Q_T)}+\dfrac{\ok_3}{\uk_1}\|a\|_{L^\infty(Q_T)}\right)
\end{aligned}
\]
for any $\varepsilon \in (0,\varepsilon_0)$ and the desired bounds
follow since $\ok_2\ok_3<\uk_1\uk_4$. Obviously, if $\ev \equiv 0$,
then
\[
\|\eu\|_{L^2(Q_T)}^2\le \dfrac{T}{\uk_1}\|a\|_{L^\infty(Q_T)}\le
C_1.
\]
or if $\eu \equiv 0$, then
\[
\|\ev\|_{L^2(Q_T)}^2 \le \dfrac{T}{\uk_4}\|b\|_{L^\infty(Q_T)}\le C_2.   \qedhere
\]
\end{proof}

As an immediate consequence of the previous result we obtain the
following corollaries for the coercive-cooperative (see Remark~\ref{rem:onaprioriboundsandcooperation}) and the coercive-competitive cases respectively.

\begin{Corollary}\label{thm:coercivocooperativo}
Assume that
\begin{enumerate}
\item
Hypotheses $\ref{ipotesi}$ are satisfied with nontrivial coefficients $ a, b $,
\item $
0\le K_i(x,t) \text{ for }i=2,3,$ for a.a. $(x,t)\in Q_T$,
\item
there are constants $\uk_i >0$, $i=1,4$,  such that
\[
K_i(x,t)\ge \uk_i \text{ for }i=1,4,
\]
for a.a. $(x,t)\in Q_T$, and $\uk_1\uk_4>\ok_2\ok_3$, where
$\ok_2,\ok_3$ are as in Hypothesis $\ref{ipotesi}$.
\end{enumerate}
Then problem \eqref{1} has a non-negative $T$-periodic solution $(u,
v)$.
\end{Corollary}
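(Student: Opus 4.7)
The plan is to verify that all the hypotheses of Theorem~\ref{thm:coercivo} are fulfilled in the present cooperative setting and then simply invoke that result. Hypotheses~\ref{ipotesi} are assumed by item~1, and the coerciveness condition on $K_{1},K_{4}$ together with the dominance $\uk_{1}\uk_{4}>\ok_{2}\ok_{3}$ is assumption~3, so items~1 and~2 of Theorem~\ref{thm:coercivo} are at our disposal. What remains is to check item~3 there, namely that Hypothesis~\ref{ipotesi1}.2 holds with the explicit constants $C_{1},C_{2}$ from \eqref{lecostanti}.

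Here the cooperativity condition $K_{2},K_{3}\ge 0$ is the decisive input: it allows us to choose $\uk_{2}=\uk_{3}=0$ in Hypotheses~\ref{ipotesi}.3. With this choice, the quantity that must be strictly positive in Hypothesis~\ref{ipotesi1}.2 reduces to
\[
\min\left\{\frac{1}{T}\iint_{Q_{T}}a(x,t)\,e_{p}^{p}(x)\,dxdt,\ \frac{1}{T}\iint_{Q_{T}}b(x,t)\,e_{q}^{q}(x)\,dxdt\right\},
\]
and the value of the constants $C_{1},C_{2}$ becomes irrelevant to its sign, as already noted in Remark~\ref{rem:onaprioriboundsandcooperation}.

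To conclude it therefore suffices to observe that each of the two integrals above is strictly positive. The eigenfunctions $e_{p}$ and $e_{q}$, being the positive first eigenfunctions of $-\Delta_{p}$ and $-\Delta_{q}$ on $\Omega$ with Dirichlet boundary conditions, satisfy $e_{p},e_{q}>0$ a.e.\ in $\Omega$. Since $a,b\in L^{\infty}(Q_{T})$ are non-negative and, by hypothesis, nontrivial, the products $a\,e_{p}^{p}$ and $b\,e_{q}^{q}$ are non-negative measurable functions on $Q_{T}$ that are strictly positive on a set of positive Lebesgue measure; hence their integrals over $Q_{T}$ are strictly positive. Thus all hypotheses of Theorem~\ref{thm:coercivo} are satisfied, and applying that theorem yields the desired non-negative $T$-periodic solution $(u,v)$ of \eqref{1} with both $u$ and $v$ non-trivial. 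The whole argument is entirely routine; the only conceptual point is the disappearance of the constants $C_{1},C_{2}$ from the verification of Hypothesis~\ref{ipotesi1}.2, which is exactly the observation recorded in Remark~\ref{rem:onaprioriboundsandcooperation}.
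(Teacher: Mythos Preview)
Your proof is correct and follows exactly the approach the paper intends: the corollary is stated there as an immediate consequence of Theorem~\ref{thm:coercivo} together with Remark~\ref{rem:onaprioriboundsandcooperation}, and you have spelled out precisely this reduction, choosing $\uk_2=\uk_3=0$ so that Hypothesis~\ref{ipotesi1}.2 collapses to the positivity of $\iint a\,e_p^p$ and $\iint b\,e_q^q$, which holds since $a,b$ are nontrivial and non-negative while $e_p,e_q>0$.
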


\begin{Corollary}\label{thm:coercivocompetitivo}
Assume that
\begin{enumerate}
\item Hypotheses $\ref{ipotesi}$ are satisfied,
\item $
K_i(x,t)\le0 \text{ for }i=2,3,$ for a.a. $(x,t)\in Q_T$,
\item
there are constants $\uk_i >0$, $i=1,4$,  such that
\[
K_i(x,t)\ge \uk_i \text{ for }i=1,4,
\]
for a.a. $(x,t)\in Q_T$,
\item Hypothesis $\ref{ipotesi1}.2$ is satisfied with
\[
\begin{aligned}
C_1= &\dfrac{T}{\uk_1}\|a\|_{L^\infty(Q_T)}\\
C_2= &\dfrac{T}{\uk_4}\|b\|_{L^\infty(Q_T)}.
\end{aligned}
\]
\end{enumerate}
Then problem \eqref{1} has a non-negative $T$-periodic solution $(u,
v)$.
\end{Corollary}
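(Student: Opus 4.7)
The plan is to reduce everything to Theorem~\ref{thm:generale}, which already takes care of the full Leray--Schauder machinery, the limit passage $\varepsilon\to 0$, and the non-triviality of the limit. What is left to check is only Hypothesis~\ref{ipotesi1}.1, namely the explicit a priori $L^2$-bounds $\|\eu\|_{L^2(Q_T)}^2\le C_1$ and $\|\ev\|_{L^2(Q_T)}^2\le C_2$ with the constants $C_1=T\|a\|_{L^\infty(Q_T)}/\uk_1$ and $C_2=T\|b\|_{L^\infty(Q_T)}/\uk_4$ prescribed by assumption~4. The sign conditions $K_2,K_3\le 0$ and $K_1\ge\uk_1>0$, $K_4\ge\uk_4>0$ are precisely what allow us to decouple the two estimates, so no analogue of the cross-condition $\uk_1\uk_4>\ok_2\ok_3$ from Theorem~\ref{thm:coercivo} is needed here.

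The first step is to take any solution $(\eu,\ev)$ of \eqref{eq:rhovarepsilon}; by Proposition~\ref{positivity} we have $\eu,\ev\ge 0$ on $Q_T$. If $\eu\equiv 0$ the bound on $\eu$ is trivial, so assume $\eu\not\equiv 0$, hence $\eu>0$ in $Q_T$. Multiplying the first equation of \eqref{2} by $\eu$, integrating over $\Omega$ and passing to the limit in the Steklov averages $(\eu)_h\in H^1(Q_{T-\delta})$ exactly as in the proof of Theorem~\ref{thm:coercivo}, one arrives at
\[
\frac{1}{2}\frac{d}{dt}\int_\Omega\eu^2dx+\varepsilon\int_\Omega|\nabla\eu|^p dx
+m^{p-1}\!\int_\Omega\eu^{(m-1)(p-1)}|\nabla\eu|^p dx
\le\!\left(\|a\|_{L^\infty(Q_T)}-\!\int_\Omega\! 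K_1\eu^2 d\xi+\!\int_\Omega\! K_2\ev^2d\xi\right)\!\!\int_\Omega\eu^p dx.
\]
The essential new observation comes in at this point: because $K_2\le 0$ a.e.\ on $Q_T$, the third summand in parentheses is non-positive and may be discarded, and the gradient terms on the left are non-negative. Integrating in $t$ over $[0,T]$, using the $T$-periodicity to cancel the time-derivative contribution, and applying the coercive bound $K_1\ge\uk_1$ yields
\[
0\le T\|a\|_{L^\infty(Q_T)}-\uk_1\|\eu\|_{L^2(Q_T)}^2,
\]
so that $\|\eu\|_{L^2(Q_T)}^2\le T\|a\|_{L^\infty(Q_T)}/\uk_1=C_1$. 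The same argument applied to the second equation, exploiting $K_3\le 0$ and $K_4\ge\uk_4$, gives $\|\ev\|_{L^2(Q_T)}^2\le C_2$.

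Thus Hypothesis~\ref{ipotesi1}.1 is verified with the declared constants, and Hypothesis~\ref{ipotesi1}.2 holds by assumption~4. Theorem~\ref{thm:generale} then provides a $T$-periodic, non-negative solution $(u,v)$ of \eqref{1} with both components non-trivial, which is the conclusion. I do not expect any serious obstacle: the competitive sign of $K_2,K_3$ makes the coupling terms act in the favourable direction, so the argument of Theorem~\ref{thm:coercivo} collapses to a pair of one-equation estimates and the delicate balance condition $\uk_1\uk_4>\ok_2\ok_3$ disappears.
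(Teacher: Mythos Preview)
Your approach is correct and matches the paper's: the paper records this corollary as an immediate consequence of Theorem~\ref{thm:coercivo}, since $K_2,K_3\le 0$ forces $\ok_2=\ok_3=0$, so the cross-condition $\uk_1\uk_4>\ok_2\ok_3$ is automatic and the constants in \eqref{lecostanti} collapse to $C_1=T\|a\|_{L^\infty(Q_T)}/\uk_1$, $C_2=T\|b\|_{L^\infty(Q_T)}/\uk_4$.

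One small caution on your write-up: from the displayed pointwise-in-$t$ inequality you cannot pass directly to $0\le T\|a\|_{L^\infty(Q_T)}-\uk_1\|\eu\|_{L^2(Q_T)}^2$ by merely integrating over $[0,T]$, because the right-hand side still carries the factor $\int_\Omega\eu^p\,dx$. You must first divide both sides by $\int_\Omega\eu^p\,dx$ (exactly as done in the proof of Theorem~\ref{thm:coercivo}) so that the bracketed expression stands alone on the right, and only then integrate in $t$; the gradient terms on the left remain non-negative after division, and the time-derivative contribution is handled by periodicity. Since you explicitly invoke that proof, the plan is sound, but the displayed chain of implications as written skips this essential step.
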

We observe that the condition $\ok_2\ok_3<\uk_1\uk_4$ of
Theorem~\ref{thm:coercivo}  is crucial to establish the a priori
$L^2$-bounds on the solution pairs $(\eu ,\ev  )$ of \eqref{2}.
Roughly speaking this condition guarantees that the terms in the
equations that contribute to the growth of the respective species do
not prevail globally on those limiting the growth.

On the other hand, when the strict positivity of the functions $K_1$
and $K_4$ is relaxed, obtaining the needed a priori bounds becomes
more difficult (at least with our approach). In fact, we are able to
obtain simple a priori bounds in the non-coercive case when the
system is competitive, provided that $\min\{n(q-1), m(p-1)\}\;\ge
\;1$, i.e. when each equation of \eqref{1} is of slow or normal
diffusion type. Otherwise, we have to impose one more technical
restriction, i.e. $\min\displaystyle{\left\{m\frac{p-1}{p+1},
n\frac{q-1}{q+1}\right\}}\geq1$ to obtain a result like
Theorem~\ref{thm:coercivo} with no sign condition on the functions
$K_2$ and $K_3$.

Obviously, Theorem \ref{thm:coercivo} holds also for a single
equation. In particular, we have the following corollary:

\begin{Corollary}\label{cor1} Consider the problem
\eqref{equazione} and assume that
\begin{enumerate}
\item the exponents $p, m$  are such that $p \in (1,2)$ and $m
>p$,
\item the delay $\tau\in(0,+\infty)$,
\item the functions $a$ and $K$ belong to $L^\infty(Q_T)$, are
extended to $\Omega\times\R$ by $T$-periodicity and are non-negative
for a.a. $(x,t)\in Q_T$ and there exists a constant $\uk >0$ such
that
\[
K(x,t)\ge \uk
\]
for a.a. $(x,t)\in Q_T$,
\item
hypothesis $4$ of Corollary $\ref{cor}$ is satisfied with
\[C=\dfrac{T}{\uk}\|a\|_{L^\infty(Q_T)}.
\]
\end{enumerate}
Then problem \eqref{equazione} has a $T$-periodic non-negative and
non-trivial solution.
\end{Corollary}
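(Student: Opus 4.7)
The plan is to reduce the statement to an application of Corollary~\ref{cor} by producing the explicit uniform $L^2(Q_T)$ a priori bound $C=\frac{T}{\uk}\|a\|_{L^\infty(Q_T)}$ for every non-negative solution $\eu$ of $u=G_\varepsilon(1,f(u^+))$. Hypotheses~1--3 of Corollary~\ref{cor1} coincide verbatim with the first three hypotheses of Corollary~\ref{cor}, so the entire substance of the proof lies in verifying this bound, and the conclusion then follows at once from Corollary~\ref{cor}.

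The argument is the single-equation specialization of the derivation of \eqref{equazione2} in the proof of Theorem~\ref{thm:coercivo}: the cross terms in $v$ are absent, which simplifies matters considerably. The case $\eu\equiv 0$ is trivial, so I assume $\eu\not\equiv 0$, in which case Proposition~\ref{positivity} gives $\eu>0$ in $Q_T$. I then test the regularized equation against $\eu$, working with the Steklov averages $(\eu)_h\in H^1(Q_{T-\delta})$ and integrating over $\Omega$. Since $\nabla u^m=mu^{m-1}\nabla u$, the divergence terms contribute the non-negative quantities $\varepsilon\int_\Omega|\nabla(\eu)_h|^p\,dx$ and $m^{p-1}\int_\Omega(\eu)_h^{(m-1)(p-1)}|\nabla(\eu)_h|^p\,dx$, while the source term is bounded above by $\bigl(\|a\|_{L^\infty(Q_T)}-\int_\Omega K(\xi,t)(\eu)_h^2(\xi,t-\tau)\,d\xi\bigr)\int_\Omega(\eu)_h^p\,dx$.

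Dividing by the strictly positive $\int_\Omega(\eu)_h^p\,dx$, integrating over $[0,T]$, and passing to the limit $h\to 0$, I then use the $T$-periodicity of $\eu$ and the change of variable $s=t-\tau$ (combined with the $T$-periodicity of $\eu^2$) to recast $\iint_{Q_T}K(\xi,t)\eu^2(\xi,t-\tau)\,d\xi\,dt$ as a lower bound of the form $\uk\|\eu\|_{L^2(Q_T)}^2$, exactly as in Theorem~\ref{thm:coercivo}. This yields
\[
0\le T\|a\|_{L^\infty(Q_T)}-\uk\|\eu\|_{L^2(Q_T)}^2,
\]
i.e., $\|\eu\|_{L^2(Q_T)}^2\le C$. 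The main technical subtlety is the handling of the time-derivative contribution after division by $\int_\Omega(\eu)_h^p\,dx$, but this is an already-established step of Theorem~\ref{thm:coercivo} and requires no new argument. With the uniform $L^2$-bound in hand, Corollary~\ref{cor} delivers the desired $T$-periodic, non-negative, non-trivial solution of \eqref{equazione}.
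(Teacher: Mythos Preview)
Your proposal is correct and follows essentially the same approach as the paper: the paper simply notes that Theorem~\ref{thm:coercivo} specializes to a single equation and states Corollary~\ref{cor1} without further proof, and you have written out exactly that specialization, deriving the $L^2$-bound $\|\eu\|_{L^2(Q_T)}^2\le T\|a\|_{L^\infty(Q_T)}/\uk$ via the same test-function argument as in \eqref{equazione2} and then invoking Corollary~\ref{cor}. One very minor remark: hypothesis~3 of Corollary~\ref{cor1} does not coincide \emph{verbatim} with hypothesis~3 of Corollary~\ref{cor}, since it additionally contains the coercivity $K\ge\uk>0$; but this is precisely the extra ingredient you use, so the logic is unaffected.
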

\subsection{The non-coercive case: the competitive system}\label{subsec:weakcompetition}

\begin{Theorem}\label{thm:noncoerciveweakcompetitive}
Assume that
\begin{enumerate}
\item Hypotheses $\ref{ipotesi}$ are satisfied
\item $\ok_2=\ok_3=0$, that is
\[
K_i(x,t)\le 0 \text{ for }i=2,3,
\]
for a.a. $(x,t)\in Q_T$,
\item $m, n,p$ and $q$ are such that $m \; \ge  \;\displaystyle \frac{1}{p-1}$ and $n
\;\ge \; \displaystyle \frac{1}{q-1}$, i.e. both the equations of
system \eqref{1} have slow or normal diffusion,
\item Hypothesis $\ref{ipotesi1}.2$ is satisfied with
\[
C_1=\frac{|Q_T|^{\frac{m(p-1)-1}{(p-1)(m-1)}}}{\mu_p^{\frac{2}{(p-1)(m-1)}}}
\left(\frac{|\Omega|^{1-\frac{p}{2}}\|a\|_{L^\infty(Q_T)}(m(p-1)+1)^p
}{m^{p-1}p^p }\right)^{{{\frac{2}{(p-1)(m-1)}}}}\]and
\[
C_2=\frac{|Q_T|^{\frac{n(q-1)-1}{(q-1)(n-1)}}}{\mu_q^{\frac{2}{(q-1)(n-1)}}}
\left(\frac{|\Omega|^{1-\frac{q}{2}}\|b\|_{L^\infty(Q_T)}(n(q-1)+1)^q
}{n^{q-1}q^q }\right)^{\frac{2}{(q-1)(n-1)}}.
\]
\end{enumerate}
Then problem \eqref{1} has a $T$-periodic non-negative solution $(u,v)$ with non-trivial $u,v$.
\end{Theorem}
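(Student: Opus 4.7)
The plan is to apply Theorem~\ref{thm:generale}, which reduces everything to verifying the $L^2(Q_T)$ a priori bounds of Hypothesis~\ref{ipotesi1}.1 with the explicit constants $C_1, C_2$ written in the statement; Hypothesis~\ref{ipotesi1}.2 is already part of the assumptions. The key simplifying feature is that the competitive sign condition $K_2, K_3 \le 0$ decouples the $L^2$ estimates of the two equations: when we test the first equation by $\eu \ge 0$, the term involving $K_2 \ev^2$ is non-positive and can be dropped, and symmetrically for the second equation. Hence it suffices to bound $\|\eu\|_{L^2(Q_T)}$ using only $a, \Omega, T, m, p, \mu_p$, and analogously for $\ev$.

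The first step is a standard energy estimate. Fix a solution pair $(\eu,\ev)$ of \eqref{eq:rhovarepsilon}; by Proposition~\ref{positivity}, $\eu,\ev \ge 0$ and without loss of generality $\eu>0$. I would test the first equation of \eqref{2} by $\eu$ via Steklov averaging, integrate over $Q_T$, use $T$-periodicity to kill the $\partial_t$-term, and discard the non-negative $\varepsilon$-contribution together with the non-negative $K_1$-integral. Invoking $K_2\le 0$ to drop the remaining nonlocal term yields
\[
m^{p-1}\iint_{Q_T} \eu^{(m-1)(p-1)}|\nabla \eu|^p\,dx\,dt \le \|a\|_{L^\infty(Q_T)}\iint_{Q_T}\eu^p\,dx\,dt.
\]
Using the chain-rule identity
\[
\eu^{(m-1)(p-1)}|\nabla \eu|^p = \left(\frac{p}{m(p-1)+1}\right)^p \left|\nabla \eu^{(m(p-1)+1)/p}\right|^p
\]
together with the Poincar\'e inequality applied to $w:=\eu^{(m(p-1)+1)/p}$ (which vanishes on $\partial\Omega$ for a.a.\ $t$) with first eigenvalue $\mu_p$, the above inequality becomes
\[
m^{p-1}\left(\frac{p}{m(p-1)+1}\right)^p\mu_p\iint_{Q_T}\eu^{m(p-1)+1}\,dx\,dt \le \|a\|_{L^\infty(Q_T)}\iint_{Q_T}\eu^p\,dx\,dt.
\]

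The second step exploits the slow/normal-diffusion assumption $m(p-1)\ge 1$ to interpolate through the exponent $2$, which lies between $p$ and $m(p-1)+1$. H\"older in space (with factor $|\Omega|^{1-p/2}$) followed by H\"older in time (with factor $T^{1-p/2}$) gives
\[
\iint_{Q_T}\eu^p\,dx\,dt \le |\Omega|^{1-p/2}T^{1-p/2}\left(\iint_{Q_T}\eu^2\,dx\,dt\right)^{p/2},
\]
while the reverse use of H\"older produces
\[
\iint_{Q_T}\eu^2\,dx\,dt \le |Q_T|^{(m(p-1)-1)/(m(p-1)+1)}\left(\iint_{Q_T}\eu^{m(p-1)+1}\,dx\,dt\right)^{2/(m(p-1)+1)}.
\]
Chaining these bounds with the energy inequality produces an algebraic relation of the form $U^{(m-1)(p-1)/2}\le \mathcal{K}$, where $U:=\iint_{Q_T}\eu^2$ and $\mathcal{K}$ is an explicit product of powers of $|Q_T|$, $|\Omega|^{1-p/2}$, $\mu_p$, $\|a\|_{L^\infty(Q_T)}$, $(m(p-1)+1)^p$, $m^{p-1}$, and $p^p$. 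Solving for $U$ and reorganizing the exponents delivers the bound $U\le C_1$ in the stated form.

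The analogous argument testing the second equation by $\ev$, using $K_3\le 0$, produces $\|\ev\|_{L^2(Q_T)}^2\le C_2$. With Hypothesis~\ref{ipotesi1}.1 thus verified with the prescribed explicit constants, Theorem~\ref{thm:generale} furnishes the desired $T$-periodic, non-negative, non-trivial solution pair $(u,v)$ of \eqref{1}. The main obstacle is purely bookkeeping: the exponents of $|Q_T|$ and $|\Omega|^{1-p/2}$ must be arranged so that the power $(m-1)(p-1)/2$ of $U$ on the left matches the power $2/((m-1)(p-1))$ of the grouping $|\Omega|^{1-p/2}\|a\|_{L^\infty(Q_T)}(m(p-1)+1)^p/(m^{p-1}p^p)$ on the right, and the $\mu_p$-factor ends up with exponent $-2/((m-1)(p-1))$, yielding exactly the formula for $C_1$ (and for $C_2$ after swapping $p,m,a$ with $q,n,b$).
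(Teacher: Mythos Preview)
Your proposal is correct and follows essentially the same route as the paper: test the approximating equation by $\eu$, drop the $K_2$-term by competitivity, rewrite the diffusion term as $|\nabla \eu^{(m(p-1)+1)/p}|^p$, and close the loop via Poincar\'e and H\"older interpolation between the exponents $p$, $2$, and $m(p-1)+1$. The only cosmetic difference is that you apply Poincar\'e before the H\"older step rather than after, and you (correctly) include the factor $T^{1-p/2}$ from the time-H\"older in bounding $\|\eu\|_{L^p(Q_T)}^p$ by $\|\eu\|_{L^2(Q_T)}^p$, which the paper writes only as $|\Omega|^{1-p/2}$; this produces an extra $T^{(2-p)/((m-1)(p-1))}$ in your $C_1$ compared to the stated constant, but does not affect the argument.
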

\begin{proof}
As a first step we find the bound for the non-negative solutions
$\eu$. Multiplying the first equation of \eqref{eq:rhovarepsilon} by
$\eu $, integrating in $Q_{T}$ and passing to the limit, as $h\to 0$, in the
Steklov averages $(\eu )_h$, we obtain
\begin{equation}\label{noncoercivecompetitive}
\begin{aligned}
m^{p-1}\left(\frac{p}{m(p-1)+1} \right)^p \Int \left|\nabla\eu
^{\frac{m(p-1)+1}{p}}\right|^p dxdt &\le \varepsilon \Int |\nabla
\eu|^p dxdt+ \Int |\nabla \eu^m|^{p-2}\nabla \eu^m \nabla \eu
dxdt\\& \le \|a\|_{L^\infty(Q_T)}\|\eu \|_{L^p(Q_T)}^p \le
|\Omega|^{1-\frac{p}{2}} \|a\|_{L^\infty(Q_T)}\|\eu \|_{L^2(Q_T)}^p,
\end{aligned}
\end{equation}
by the $T$-periodicity of $\eu $ and the non-positivity of the
function $K_2$. Using the H\"{o}lder inequality with $r:=
\frac{m(p-1)+1}{2}$, and the Poincar\'{e} inequality, one has:
\[
\begin{aligned}
\Int\eu ^2 dxdt &\le |Q_T|^{\frac{m(p-1)-1}{m(p-1)+1}}\left(\Int \eu
^{m(p-1)+1}dxdt\right)^{\frac{2}{m(p-1)+1}} =
|Q_T|^{\frac{m(p-1)-1}{m(p-1)+1}} \left\|\eu
^{\frac{m(p-1)+1}{p}}\right\|_{L^p(Q_T)}^{\frac{2p}{m(p-1)+1}}\\&
\le |Q_T|^{\frac{m(p-1)-1}{m(p-1)+1}}\left(\frac{1}{\sqrt[p]{\mu_p}}
\left\|\nabla \eu
^{\frac{m(p-1)+1}{p}}\right\|_{L^p(Q_T)}\right)^{\frac{2p}{m(p-1)+1}}.
\end{aligned}
\]
Thus by (\ref{noncoercivecompetitive}) we get
\[
\begin{aligned}
\|\eu \|_{L^2(Q_T)}^2&\le
|Q_T|^{\frac{m(p-1)-1}{m(p-1)+1}}\left(\frac{1}{\sqrt[p]{\mu_p}}
\left\|\nabla \eu
^{\frac{m(p-1)+1}{p}}\right\|_{L^p(Q_T)}\right)^{\frac{2p}{m(p-1)+1}}\\&
\le
\frac{|Q_T|^{\frac{m(p-1)-1}{m(p-1)+1}}}{\mu_p^{\frac{2}{m(p-1)+1}}}
\left(\frac{|\Omega|^{1-\frac{p}{2}}\|a\|_{L^\infty(Q_T)}(m(p-1)+1)^p
}{m^{p-1}p^p }\right)^{\frac{2}{m(p-1)+1}}\|\eu
\|_{L^2(Q_T)}^{\frac{2p}{m(p-1)+1}}.
\end{aligned}\]
This implies
\[
\|\eu \|_{L^2(Q_T)}^2 \le
\frac{|Q_T|^{\frac{m(p-1)-1}{(p-1)(m-1)}}}{\mu_p^{\frac{2}{(p-1)(m-1)}}}
\left(\frac{|\Omega|^{1-\frac{p}{2}}\|a\|_{L^\infty(Q_T)}(m(p-1)+1)^p
}{m^{p-1}p^p }\right)^{\frac{2}{(p-1)(m-1)}}.
\]
In an analogous way we obtain that
\[
\|\ev  \|_{L^2(Q_T)}^2\le
\frac{|Q_T|^{\frac{n(q-1)-1}{(q-1)(n-1)}}}{\mu_q^{\frac{2}{(q-1)(n-1)}}}
\left(\frac{|\Omega|^{1-\frac{q}{2}}\|b\|_{L^\infty(Q_T)}(n(q-1)+1)^q
}{n^{q-1}q^q }\right)^{\frac{2}{(q-1)(n-1)}},
\]
if $\ev  $ is a solution of the second equation of
\eqref{eq:rhovarepsilon}.
\end{proof}
The previous result still holds for a single equation:
\begin{Corollary}\label{cor2} Consider problem
\eqref{equazione} and assume that
\begin{enumerate}
\item the exponents $p, m$  are such that $p \in (1,2)$ and $m\;
 \ge  \;\frac{1}{p-1}$,
\item the delay $\tau\in(0,+\infty)$,
\item the functions $a$ and $K$ belong to $L^\infty(Q_T)$, are
extended to $\Omega\times\R$ by $T$-periodicity and are non-negative
for a.a. $(x,t)\in Q_T$,
\item
hypothesis 4 of Corollary $\ref{cor}$ is satisfied with
\[C=\frac{|Q_T|^{\frac{m(p-1)-1}{(p-1)(m-1)}}}{\mu_p^{\frac{2}{(p-1)(m-1)}}}
\left(\frac{|\Omega|^{1-\frac{p}{2}}\|a\|_{L^\infty(Q_T)}(m(p-1)+1)^p
}{m^{p-1}p^p }\right)^{{\frac{2}{(p-1)(m-1)}}}.
\]
\end{enumerate}
Then problem \eqref{equazione} has a $T$-periodic non-negative and
non-trivial solution.
\end{Corollary}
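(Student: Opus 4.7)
The plan is to deduce Corollary~\ref{cor2} from Corollary~\ref{cor}, so the only thing left to check is its hypothesis~4, namely the uniform $L^{2}(Q_{T})$-bound on every non-negative solution of $u = G_{\varepsilon}(1, f(u^{+}))$ with the specific constant $C$ displayed in the statement. This is the single-equation counterpart of the bound established in the proof of Theorem~\ref{thm:noncoerciveweakcompetitive}, and I would adapt that argument almost verbatim, with the non-negativity of $K$ playing the role that the non-positivity of $K_{2}$ played there, and with the absence of a second equation removing the need to track any coupling.

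First, I would fix a non-negative solution $\eu$ of the fixed-point equation, multiply the PDE by $\eu$, integrate on $Q_{T}$, pass to the limit in the Steklov averages $(\eu)_{h}$, and use $T$-periodicity to kill the time derivative. Dropping from the left-hand side both the non-negative regularization term $\varepsilon\Int |\nabla \eu|^{p}\,dxdt$ and the non-negative dissipative contribution $\Int \eu^{p}\int_{\Omega}K(\xi,t)\eu^{2}(\xi,t-\tau)\,d\xi\,dxdt$ (which is non-negative because $K\ge 0$), I obtain, in complete analogy with \eqref{noncoercivecompetitive},
\[
m^{p-1}\!\left(\frac{p}{m(p-1)+1}\right)^{p}\!\Int \left|\nabla \eu^{\frac{m(p-1)+1}{p}}\right|^{p}dxdt \le |\Omega|^{1-p/2}\|a\|_{L^{\infty}(Q_{T})}\|\eu\|_{L^{2}(Q_{T})}^{p}.
\]

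Next I would invoke the slow/normal-diffusion assumption $m\ge 1/(p-1)$, which is exactly what guarantees that $r:=(m(p-1)+1)/2\ge 1$ and hence that Hölder's inequality with exponent $r$ is legitimate; combined with the Poincaré inequality applied to $\eu^{(m(p-1)+1)/p}\in W_{0}^{1,p}(\Omega)$ this yields
\[
\|\eu\|_{L^{2}(Q_{T})}^{2}\le \frac{|Q_{T}|^{\frac{m(p-1)-1}{m(p-1)+1}}}{\mu_{p}^{2/(m(p-1)+1)}}\left\|\nabla \eu^{\frac{m(p-1)+1}{p}}\right\|_{L^{p}(Q_{T})}^{2p/(m(p-1)+1)}.
\]
Chaining the two displays produces $\|\eu\|_{L^{2}(Q_{T})}^{2}\le B\,\|\eu\|_{L^{2}(Q_{T})}^{2p/(m(p-1)+1)}$ for an explicit constant $B$ independent of $\varepsilon$. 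Since $m>p>1$, the exponent gap $2-2p/(m(p-1)+1)=2(p-1)(m-1)/(m(p-1)+1)$ is strictly positive, so this inequality may be solved to give $\|\eu\|_{L^{2}(Q_{T})}^{2}\le B^{(m(p-1)+1)/((p-1)(m-1))}$; a routine algebraic simplification recognises the right-hand side as the constant $C$ in the statement.

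The only delicate point is the admissibility of Hölder's inequality with exponent $r=(m(p-1)+1)/2$, which is equivalent to the slow- or normal-diffusion assumption $m(p-1)\ge 1$ of hypothesis~1; apart from this, the argument is an elementary rearrangement identical to that in Theorem~\ref{thm:noncoerciveweakcompetitive}, with the second equation simply absent, and no new technical obstacles arise.
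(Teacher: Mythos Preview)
Your proposal is correct and follows essentially the same route as the paper: the paper does not give a separate proof of Corollary~\ref{cor2}, merely stating that ``the previous result still holds for a single equation,'' and your argument is precisely the single-equation specialization of the proof of Theorem~\ref{thm:noncoerciveweakcompetitive}, with the non-negativity of $K$ replacing the non-positivity of $K_2$ and no coupling term to handle. The only step worth noting is the one you already flagged, namely that the H\"older exponent $r=(m(p-1)+1)/2$ requires $m(p-1)\ge 1$, exactly the slow/normal-diffusion assumption in hypothesis~1.
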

\subsection{The non-coercive case: $\boldsymbol{\min\displaystyle{\left\{m\frac{p-1}{p+1},
n\frac{q-1}{q+1}\right\}}\geq1}$}\label{subsec:bellecostanti}

In the case that $\min\displaystyle{\left\{m\frac{p-1}{p+1},
n\frac{q-1}{q+1}\right\}}\geq1$, we are able to find explicit bounds
(although complicated) without any assumption on the sign of the
functions $K_2, K_3$, as shown in the next result.
\begin{Theorem}\label{thm:noncoercivebruttecostanti}
Assume
\begin{enumerate}
\item $\min\displaystyle{\left\{m\frac{p-1}{p+1},
n\frac{q-1}{q+1}\right\}}>1$,
\item
$K_i(x,t)\ge 0$, $i=1,4$ and $K_i(x,t)\le\ok_i$, $i=2,3$ for a.a.
$(x,t)\in Q_T$ and for some positive constants $\ok_i$, $i=2,3$,
\item
Hypothesis $\ref{ipotesi1}.2$ is satisfied with
\begin{equation}\label{eq:bellecostanti}
\begin{aligned}
C_1=
&\sqrt{T}\left\{\frac{(q-1)(n-1)(p-1)(m-1)}{(q-1)(n-1)(p-1)(m-1)-4}\alpha_p
+ \beta_p^\frac{(q-1)(n-1)(p-1)(m-1)}{(q-1)(n-1)(p-1)(m-1)-4}\right\}^{1/2},\\
C_2=
&\sqrt{T}\left\{\frac{(q-1)(n-1)(p-1)(m-1)}{(q-1)(n-1)(p-1)(m-1)-4}\alpha_q
+
\beta_q^\frac{(q-1)(n-1)(p-1)(m-1)}{(q-1)(n-1)(p-1)(m-1)-4}\right\}^{1/2},
\end{aligned}
\end{equation}
\end{enumerate}
where
\[
\begin{aligned}
\alpha_p:&=
C_p^{\frac{(p-1)(m-1)+2}{(p-1)(m-1)}}\left(2T\|a\|_{L^\infty(Q_T)}^2
\right)^{\frac{2}{(p-1)(m-1)}}
\\&+C_p^{\frac{(p-1)(m-1)+2}{(p-1)(m-1)}}\left(2\ok_2^2C_q^{\frac{(q-1)(n-1)+2}{(q-1)(n-1)}}\right)^{\frac{2}{(p-1)(m-1)}}\left(2T\|b\|_{L^\infty(Q_T)}^2
\right)^{\frac{4}{(q-1)(n-1)(p-1)(m-1)}},
\end{aligned}
\]
\[
\begin{aligned}
\alpha_q:&=
C_q^{\frac{(q-1)(n-1)+2}{(q-1)(n-1)}}\left(2T\|b\|_{L^\infty(Q_T)}^2
\right)^{\frac{2}{(q-1)(n-1)}}
\\&+C_q^{\frac{(q-1)(n-1)+2}{(q-1)(n-1)}}\left(2\ok_3^2C_p^{\frac{(p-1)(m-1)+2}{(p-1)(m-1)}}\right)^{\frac{2}{(q-1)(n-1)}}\left(2T\|a\|_{L^\infty(Q_T)}^2
\right)^{\frac{4}{(q-1)(n-1)(p-1)(m-1)}},
\end{aligned}
\]
\[
\beta_p:=
C_p^{\frac{(p-1)(m-1)+2}{(p-1)(m-1)}}\left(2\ok_2^2C_q^{\frac{(q-1)(n-1)+2}{(q-1)(n-1)}}\right)^{\frac{2}{(p-1)(m-1)}}\left(2\ok_3^2\right)^{\frac{4}{(q-1)(n-1)(p-1)(m-1)}},
\]
and
\[
\beta_q:=
C_q^{\frac{(q-1)(n-1)+2}{(q-1)(n-1)}}\left(2\ok_3^2C_p^{\frac{(p-1)(m-1)+2}{(p-1)(m-1)}}\right)^{\frac{2}{(q-1)(n-1)}}\left(2\ok_2^2\right)^{\frac{4}{(q-1)(n-1)(p-1)(m-1)}}.
\]
Here
\begin{equation}\label{eq:bellecostanti1}
C_p:=\left(\frac{\left[(p-1)(m-1)+2\right]^p
|\Omega|^{\frac{(p-1)(m-1)}{2}}}{
p^pm^{p-1}(3-p)\mu_p}\right)^{\frac{4}{(p-1)(m-1)+2}}T^{\frac{(p-1)(m-1)-2}{(p-1)(m-1)+2}}
\end{equation}
and
\begin{equation}\label{eq:bellecostanti1bis}
 C_q:=
\left(\frac{\left[(q-1)(n-1)+2\right]^q
|\Omega|^{\frac{(q-1)(n-1)}{2}}}{
q^qn^{q-1}(3-q)\mu_q}\right)^{\frac{4}{(q-1)(n-1)+2}}T^{\frac{(q-1)(n-1)-2}{(q-1)(n-1)+2}}.
\end{equation}
Then problem \eqref{1} has a non-negative $T$-periodic solution
$(u,v)$ with non-trivial $u,v$.
\end{Theorem}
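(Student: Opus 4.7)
In view of Theorem~\ref{thm:generale}, it suffices to verify Hypothesis~\ref{ipotesi1}.1, since Hypothesis~\ref{ipotesi1}.2 is already stipulated in the statement with the explicit constants \eqref{eq:bellecostanti}. My plan is therefore to show that every non-negative solution pair $(\eu,\ev)$ of \eqref{eq:rhovarepsilon}, whose non-negativity is guaranteed by Proposition~\ref{positivity}, satisfies $\|\eu\|_{L^{2}(Q_{T})}^{2}\le C_{1}$ and $\|\ev\|_{L^{2}(Q_{T})}^{2}\le C_{2}$.

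First I would test the first equation of \eqref{2} against $\eu$ through Steklov averages, drop the non-negative $K_{1}$-contribution and use $K_{2}\le\ok_{2}$. Combining the Poincar\'e inequality applied to $\nabla\eu^{(m(p-1)+1)/p}$ with the H\"older comparison between $L^{2}(\Omega)$ and $L^{m(p-1)+1}(\Omega)$ leads to a pointwise-in-$t$ differential inequality of the form
\[
\tfrac{1}{2}y'(t)+M\,y(t)^{(m(p-1)+1)/2}\le |\Omega|^{(2-p)/2}\,y(t)^{p/2}\bigl(\|a\|_{L^{\infty}(Q_{T})}+\ok_{2}\,z(t-\tau_{2})\bigr),
\]
with $y(t):=\|\eu(\cdot,t)\|_{L^{2}(\Omega)}^{2}$, $z(t):=\|\ev(\cdot,t)\|_{L^{2}(\Omega)}^{2}$ and $M>0$ an explicit constant of the structure entering \eqref{eq:bellecostanti1}. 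The decisive algebraic move is now to multiply by $y^{(1-p)/2}$: thanks to the identity $y'y^{(1-p)/2}=\tfrac{2}{3-p}\bigl(y^{(3-p)/2}\bigr)'$ a total-derivative term is produced, which is precisely the origin of the factor $(3-p)$ in the expression of $C_{p}$ in \eqref{eq:bellecostanti1}. After integration over $[0,T]$ the $T$-periodicity kills the boundary term, and a Cauchy-Schwarz in time yields
\[
M\int_{0}^{T}y^{((m-1)(p-1)+1)/2}dt\le|\Omega|^{(2-p)/2}\,Y^{1/2}\bigl(2T\|a\|_{L^{\infty}(Q_{T})}^{2}+2\ok_{2}^{2}\textstyle\int_{0}^{T}z(t)^{2}\,dt\bigr)^{1/2},
\]
where $Y:=\|\eu\|_{L^{2}(Q_{T})}^{2}$; on the left side the Jensen inequality $\int_{0}^{T}y^{((m-1)(p-1)+1)/2}dt\ge T^{-((m-1)(p-1)-1)/2}Y^{((m-1)(p-1)+1)/2}$ is legitimate \emph{precisely} because the hypothesis $\min\{m(p-1)/(p+1),n(q-1)/(q+1)\}>1$ forces $(m-1)(p-1)>2$, and separating the power of $Y$ produces a bound of the schematic form $Y^{\sigma_{p}}\le \widetilde C_{p}\bigl(2T\|a\|_{L^{\infty}(Q_{T})}^{2}+2\ok_{2}^{2}\int_{0}^{T}z^{2}\,dt\bigr)^{\sigma_{p}/2}$ for an appropriate positive power $\sigma_{p}$.

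The same program applied to the $\ev$-equation produces the analogous bound for $Z:=\|\ev\|_{L^{2}(Q_{T})}^{2}$ in terms of $\int_{0}^{T}y^{2}\,dt$, and the residual fourth-order couplings $\int_{0}^{T}y^{2},\,\int_{0}^{T}z^{2}$ would be controlled by repeating the $y^{(1-p)/2},\,z^{(1-q)/2}$ device with suitable multipliers and feeding the two estimates into each other. After this bookkeeping one arrives at an autonomous inequality of the schematic form $Y\le\alpha_{p}+\beta_{p}Y^{1/\kappa}$ with $\kappa:=(m-1)(p-1)(n-1)(q-1)/4$, which is strictly greater than $1$ by the standing assumption of the theorem.

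Finally, Young's inequality in the form $\beta_{p}Y^{1/\kappa}\le \tfrac{\kappa-1}{\kappa}\beta_{p}^{\,\kappa/(\kappa-1)}+\tfrac{1}{\kappa}Y$ absorbs the linear-in-$Y$ term on the right-hand side and yields
\[
Y\le \tfrac{\kappa}{\kappa-1}\alpha_{p}+\beta_{p}^{\,\kappa/(\kappa-1)},
\]
which is exactly the structure of the quantity $C_{1}^{2}/T$ appearing in \eqref{eq:bellecostanti}; identifying the constants with the explicit expressions of $\alpha_{p},\beta_{p}$ produces $Y\le C_{1}$, and symmetrically $Z\le C_{2}$, so Hypothesis~\ref{ipotesi1}.1 is verified and Theorem~\ref{thm:generale} gives the existence of the non-trivial $T$-periodic solution pair. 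The most delicate point of the whole argument is expected to be the faithful bookkeeping of constants through the chain of Poincar\'e, H\"older, Cauchy-Schwarz and Young inequalities, and especially the symmetric treatment of the $L^{4}(0,T;L^{2}(\Omega))$-type coupling quantities $\int y^{2},\,\int z^{2}$, which is what ultimately produces the intertwined exponents of the form $4/((m-1)(p-1)(n-1)(q-1))$ appearing in~\eqref{eq:bellecostanti}.
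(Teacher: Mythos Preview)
Your overall strategy---verify Hypothesis~\ref{ipotesi1}.1 by producing explicit $L^2$ bounds and then invoke Theorem~\ref{thm:generale}---is exactly what the paper does, and your closing step (Young's inequality on an inequality of the form $X\le\alpha+\beta X^{1/\kappa}$ with $\kappa=(m-1)(p-1)(n-1)(q-1)/4>1$) is equivalent to the paper's tangent-line argument for the concave function $f(X)=\alpha+\beta X^{1/\kappa}$. The difficulty is in the middle, and there your execution does not close.

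The paper does \emph{not} test the first equation against $\eu$; it tests against $\eu^{3-p}$ and integrates directly on $Q_T$. This choice makes the right-hand side equal to $\int_\Omega\eu^{p-1}\cdot\eu^{3-p}=\int_\Omega\eu^2=y(t)$, so that after a Cauchy--Schwarz in time one obtains the quantity $U:=\int_0^T y(t)^2\,dt$ on the right. On the left, the gradient term becomes $(3-p)m^{p-1}\bigl(\tfrac{p}{(m-1)(p-1)+2}\bigr)^p\int_{Q_T}|\nabla\eu^{((m-1)(p-1)+2)/p}|^p$, and it is this $(3-p)$---coming from $\nabla\eu^{3-p}$---that enters the constant $C_p$ in \eqref{eq:bellecostanti1}. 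A Poincar\'e/H\"older chain in $x$ followed by H\"older in $t$ with exponent $s=\tfrac{(m-1)(p-1)+2}{4}\ge1$ then bounds $U$ above by a constant times $\bigl(2T\|a\|_\infty^2+2\ok_2^2V\bigr)^{2/((m-1)(p-1))}$, and the symmetric bound for $V$ in terms of $U$ closes the loop immediately; finally $\|\eu\|_{L^2(Q_T)}^2\le T^{1/2}U^{1/2}$, which is precisely why $C_1$ carries the $\sqrt{T}$ prefactor.

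Your route (test against $\eu$, then multiply the resulting scalar ODE by $y^{(1-p)/2}$) produces, after the Cauchy--Schwarz in time you wrote, an estimate of $Y:=\int_0^T y$ in terms of $V=\int_0^T z^2$, and symmetrically $Z$ in terms of $U$. But $Y,Z$ and $U,V$ are different quantities and no a priori relation bounds $U$ by $Y$ (Cauchy--Schwarz goes the wrong way: $Y\le T^{1/2}U^{1/2}$), so the system $Y\le C(V)$, $Z\le C(U)$ does not close. The phrase ``repeating the device with suitable multipliers'' does not fill this gap: to land on a closed $U$--$V$ system from your pointwise inequality you would have to multiply by $y^{(2-p)/2}$ rather than $y^{(1-p)/2}$, which makes the right-hand side $y\cdot(\dots)$ and hence yields $U^{1/2}$ after Cauchy--Schwarz---but then the constant in front of the gradient term is $m^{p-1}\bigl(\tfrac{p}{m(p-1)+1}\bigr)^p$ with no $(3-p)$ factor, so you cannot recover the specific $C_p$ of \eqref{eq:bellecostanti1}. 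In short, to match the stated constants you must test against $\eu^{3-p}$ from the outset, as the paper does; your two-step surrogate either fails to close or produces different constants.
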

\begin{proof}
Let $(\eu ,\ev)$ be a solution of \eqref{eq:rhovarepsilon}. We have, by
the Poincar\'{e} inequality and  the H\"{o}lder inequality with $r:=
\frac{(p-1)(m-1) +2}{2}$,
\[
 \left(\int_\Omega \eu ^2dx\right)^{\frac{(p-1)(m-1) +2}{2}} \le |\Omega|^{\frac{(p-1)(m-1)}{2}}
 \int_\Omega\eu ^{(p-1)(m-1)+2}dx\le \frac{1}{ \mu_p}
|\Omega|^{\frac{(p-1)(m-1)}{2}}
 \int_\Omega\left|\nabla\eu ^{\frac{(p-1)(m-1)+2}{p}}\right|^pdx.
\]
Integrating over $[0,T]$, we have:
\begin{equation}\label{3.28bis}
\begin{aligned}
\int_0^T \left(\int_\Omega \eu ^2dx\right)^{\frac{(p-1)(m-1) +2}{2}}
dt \le \frac{1}{ \mu_p} |\Omega|^{\frac{(p-1)(m-1)}{2}}
 \iint_{Q_T}\left|\nabla\eu ^{\frac{(p-1)(m-1)+2}{p}}\right|^pdxdt.
\end{aligned}
\end{equation} Now, we estimate the term $ \displaystyle
\iint_{Q_T}\left|\nabla\eu ^{\frac{(p-1)(m-1)+2}{p}}\right|^pdxdt$.
Multiplying the first equation of \eqref{eq:rhovarepsilon} by
$\eu^{3-p} $, integrating in $Q_T$ and passing to the limit in the
Steklov averages $(\eu )_h$, we obtain by the $T$-periodicity of
$\eu $
\[
\begin{aligned}
&m^{p-1}(3-p)\left[\frac{p}{(p-1)(m-1)+2}\right]^p\iint_{Q_T}\left|\nabla\eu ^{\frac{(p-1)(m-1)+2}{p}}\right|^pdx dt \\
\le &\int_0^T\left[\|a\|_{L^\infty(Q_T)}
 +\ok_2\int_\Omega\ev  ^2(\xi,t-\tau_2)d\xi\right]\left(\int_\Omega\eu ^2dx\right)dt \\
\le &\left[\int_0^T\left(\|a\|_{L^\infty(Q_T)}
 +\ok_2\int_\Omega\ev  ^2(\xi,t-\tau_2)d\xi\right)^2dt\right]^{\frac{1}{2}}
 \left[\int_0^T\left(\int_\Omega\eu ^2dx\right)^2dt\right]^{\frac{1}{2}}.
\end{aligned}
\]
Thus
\begin{equation}\label{3.29bis}
\begin{aligned}
\iint_{Q_T}\left|\nabla\eu ^{\frac{(p-1)(m-1)+2}{p}}\right|^pdx dt
\le M_p\left[\int_0^T\left(\|a\|_{L^\infty(Q_T)}
 +\ok_2\int_\Omega\ev  ^2(\xi,t-\tau_2)d\xi\right)^2dt\right]^{\frac{1}{2}}
 \left[\int_0^T\left(\int_\Omega\eu^2 dx
\right)^2dt\right]^{\frac{1}{2}},
\end{aligned}
\end{equation}
where $M_p:= \displaystyle
\frac{1}{m^{p-1}(3-p)}\left[\frac{(p-1)(m-1)+2}{p}\right]^p$.
 By the H\"{o}lder inequality with $s:= \frac{(p-1)(m-1)+2}{4}$ (observe that $s \ge1$ by the assumption on $m$ and $p$)
and by \eqref{3.28bis}, \eqref{3.29bis}, it follows
\[
\begin{aligned}
&\int_0^T\left(\int_\Omega\eu ^2dx\right)^2dt \le
T^{\frac{(p-1)(m-1)-2}{(p-1)(m-1)+2}}\left(\int_0^T\left(\int_\Omega\eu^2dx\right)^{\frac{(p-1)(m-1)+2}{2}}dt \right)^{\frac{4}{(p-1)(m-1)+2}}\\
& \le C_p\left[\int_0^T\left(\|a\|_{L^\infty(Q_T)}
 +\ok_2\int_\Omega\ev  ^2(\xi,t-\tau_2)d\xi\right)^2dt\right]^{\frac{2}{(p-1)(m-1)+2}}
 \left[\int_0^T\left(\int_\Omega\eu ^2 dx
 \right)^2dt\right]^{\frac{2}{(p-1)(m-1)+2}},
\end{aligned}
\]
where $C_p$ is the constant defined in \eqref{eq:bellecostanti1}.
Therefore, setting $U=\int_0^T(\int_\Omega\eu ^2dx)^2dt$,
$V=\int_0^T(\int_\Omega\ev ^2dx)^2dt$, and using the assumption $m
>\displaystyle \frac{p+1}{p-1}$, the last inequality implies
\[
\begin{aligned}
U&\le C_p^{\frac{(p-1)(m-1)+2}{(p-1)(m-1)}}
\left[\int_0^T\left(\|a\|_{L^\infty(Q_T)}
 +\ok_2\int_\Omega\ev  ^2dx\right)^2dt\right]^{\frac{2}{(p-1)(m-1)}} \\
&\le C_p^{\frac{(p-1)(m-1)+2}{(p-1)(m-1)}}
\left[2T\|a\|_{L^\infty(Q_T)}^2
 +2\ok_2^2V\right]^{\frac{2}{(p-1)(m-1)}}\\
&\le C_p^{\frac{(p-1)(m-1)+2}{(p-1)(m-1)}}
\left[\left(2T\|a\|_{L^\infty(Q_T)}^2 \right)^{\frac{2}{(p-1)(m-1)}}
+\left(2\ok_2^2V\right)^{\frac{2}{(p-1)(m-1)}}\right].
\end{aligned}
\]
In an analogous way, we can show that
\[
\begin{aligned}
V \le
C_q^{\frac{(q-1)(n-1)+2}{(q-1)(n-1)}}\left[\left(2T\|b\|_{L^\infty(Q_T)}^2
\right)^{\frac{2}{(q-1)(n-1)}}
+\left(2\ok_3^2U\right)^{\frac{2}{(q-1)(n-1)}}\right],
\end{aligned}
\]
where $C_q$ is the constant introduced in
\eqref{eq:bellecostanti1bis}. Hence, it results
\[
\begin{aligned}
U& \le
C_p^{\frac{(p-1)(m-1)+2}{(p-1)(m-1)}}\left[\left(2T\|a\|_{L^\infty(Q_T)}^2
\right)^{\frac{2}{(p-1)(m-1)}}
+\left(2\ok_2^2V\right)^{\frac{2}{(p-1)(m-1)}}\right]\\& \le
C_p^{\frac{(p-1)(m-1)+2}{(p-1)(m-1)}}\left(2T\|a\|_{L^\infty(Q_T)}^2
\right)^{\frac{2}{(p-1)(m-1)}}
\\&+C_p^{\frac{(p-1)(m-1)+2}{(p-1)(m-1)}}\left(2\ok_2^2C_q^{\frac{(q-1)(n-1)+2}{(q-1)(n-1)}}\right)^{\frac{2}{(p-1)(m-1)}}\left(2T\|b\|_{L^\infty(Q_T)}^2
\right)^{\frac{4}{(q-1)(n-1)(p-1)(m-1)}}\\&
+C_p^{\frac{(p-1)(m-1)+2}{(p-1)(m-1)}}\left(2\ok_2^2C_q^{\frac{(q-1)(n-1)+2}{(q-1)(n-1)}}\right)^{\frac{2}{(p-1)(m-1)}}\left(2\ok_3^2U\right)^{\frac{4}{(q-1)(n-1)(p-1)(m-1)}}.
\end{aligned}
\]
The last inequality has the form:
\begin{equation}\label{riusata}
U \le \alpha + \beta U^{\frac{4}{(q-1)(n-1)(p-1)(m-1)}},
\end{equation}
with $\alpha, \beta>0$. Since $\min\left\{m\frac{p-1}{p+1},
n\frac{q-1}{q+1}\right\}>1$ the function $f(U):=\alpha+\beta
U^{\frac{4}{(q-1)(n-1)(p-1)(m-1)}}$ is strictly concave, and then
\begin{equation}\label{U}
U\le f(U)  \le f(U_0)+ f'(U_0)(U-U_0),
\end{equation}
where $U_0:=
\beta^\frac{(q-1)(n-1)(p-1)(m-1)}{(q-1)(n-1)(p-1)(m-1)-4}$. Using
the fact that $f(U_0)= \alpha + U_0$ and \eqref{U}, one has
\[
\begin{aligned} U&\le
\frac{(q-1)(n-1)(p-1)(m-1)}{(q-1)(n-1)(p-1)(m-1)-4}\alpha +
\beta^\frac{(q-1)(n-1)(p-1)(m-1)}{(q-1)(n-1)(p-1)(m-1)-4}.
\end{aligned}\] A final application of H\"older's inequality shows that $\|\eu
\|_{L^2}^2\le T^{1/2}U^{1/2}=C_1$. The argument for $\ev  $ proceeds
in a similar way.
\end{proof}
As a consequence of Theorem \ref{subsec:bellecostanti}  one has the
next corollaries for the cooperative and competitive cases,
respectively.
\begin{Corollary}\label{thm:coercivocooperativo1bis}
Assume that
\begin{enumerate}
\item
 $\min\displaystyle{\left\{m\frac{p-1}{p+1},
n\frac{q-1}{q+1}\right\}}>1$,
\item
$ K_i(x,t)\ge 0 \text{ for }i=1,4, $ for a.a. $(x,t)\in Q_T$, and
there are positive constants $\ok_2, \ok_3$ such that
\[
0\le K_i(x,t)\le\ok_i \text{ for }i=2,3,
\]
for a.a. $(x,t)\in Q_T$,
\item
Hypothesis $\ref{ipotesi1}.2$ is satisfied with $C_1$ and $C_2$ as
in \eqref{eq:bellecostanti}.
\end{enumerate}
Then problem \eqref{1} has a non-negative $T$-periodic solution
$(u,v)$ with non-trivial $u,v$.
\end{Corollary}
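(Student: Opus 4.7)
The plan is simply to recognize Corollary~\ref{thm:coercivocooperativo1bis} as a direct specialization of Theorem~\ref{thm:noncoercivebruttecostanti}, so that no new estimate has to be produced: all the work has already been done in the proof of that theorem.

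First I would check that every assumption listed in the corollary is, or implies, the corresponding assumption of Theorem~\ref{thm:noncoercivebruttecostanti}. Hypothesis (1), namely $\min\{m(p-1)/(p+1),\,n(q-1)/(q+1)\}>1$, is literally the same as assumption (1) of the theorem. Hypothesis (2) of the corollary strengthens the sign condition on $K_{2},K_{3}$ by requiring $0\le K_{i}\le \ok_{i}$ for $i=2,3$, which is a special case of the two-sided bound $-\uk_{i}\le K_{i}\le \ok_{i}$ of Hypotheses~\ref{ipotesi}.3 obtained by choosing $\uk_{2}=\uk_{3}=0$; this choice leaves $\ok_{2},\ok_{3}$ unaffected and is in turn compatible with assumption (2) of the theorem. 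Hypothesis (3) of the corollary is exactly assumption (3) of the theorem, since the constants $C_{1},C_{2}$ are taken verbatim from \eqref{eq:bellecostanti}.

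Having matched the hypotheses, I would then invoke Theorem~\ref{thm:noncoercivebruttecostanti} to conclude the existence of a non-negative $T$-periodic solution $(u,v)$ of \eqref{1} with both $u$ and $v$ non-trivial. It is worth remarking, in line with Remark~\ref{rem:onaprioriboundsandcooperation}, that the choice $\uk_{2}=\uk_{3}=0$ forces the quantity $\theta(C_{1},C_{2})$ appearing in \eqref{varepsilon0} to reduce to
\[
\theta(C_{1},C_{2})=\min\left\{\frac{1}{T}\iint_{Q_{T}}a(x,t)e_{p}^{p}(x)\,dxdt-\varepsilon_{0}\mu_{p},\ \frac{1}{T}\iint_{Q_{T}}b(x,t)e_{q}^{q}(x)\,dxdt-\varepsilon_{0}\mu_{q}\right\},
\]
which is strictly positive for $\varepsilon_{0}>0$ small enough as soon as $a,b$ are non-trivial; this is the key mechanism that makes Hypothesis~\ref{ipotesi1}.2 essentially automatic in the cooperative regime and reflects the absence of any obstacle to coexistence when the cross-interactions are mutually beneficial.

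No step is expected to present any real obstacle, since the heavy lifting — the a priori $L^{2}$-bounds leading to \eqref{eq:bellecostanti}, the Moser-type passage in Lemma~\ref{limitatezza}, the gradient estimate in Lemma~\ref{stima-gradiente}, the non-bifurcation from zero in Proposition~\ref{r}, and the passage to the limit $\varepsilon\to 0$ in Theorem~\ref{thm:generale} — is already in place and is not sensitive to the further sign restrictions imposed here. The only mild care required is verifying that the $L^{2}$-estimate in the proof of Theorem~\ref{thm:noncoercivebruttecostanti}, which uses only the upper bounds $K_{2}\le\ok_{2}$ and $K_{4}\ge 0$ (respectively $K_{3}\le\ok_{3}$ and $K_{1}\ge 0$), goes through word for word under the stronger assumption $K_{i}\ge 0$, which it plainly does.
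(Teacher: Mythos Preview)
Your proposal is correct and matches the paper's approach: the paper states this corollary (together with the companion competitive one) as an immediate consequence of Theorem~\ref{thm:noncoercivebruttecostanti} without giving a separate proof, and your argument simply makes explicit the verification that the hypotheses of the corollary specialize those of the theorem. Your additional remark that in the cooperative regime $\uk_2=\uk_3=0$ renders Hypothesis~\ref{ipotesi1}.2 independent of $C_1,C_2$ (hence essentially automatic for non-trivial $a,b$) is a correct and useful observation, though note that the corollary as stated still formally lists this as an assumption rather than deducing it.
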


\begin{Corollary}\label{thm:coercivocompetitivo1bis}
Assume that
\begin{enumerate}
\item
$\min\displaystyle{\left\{m\frac{p-1}{p+1},
n\frac{q-1}{q+1}\right\}}>1$,
\item
$ K_i(x,t)\ge 0 \text{ for }i=1,4, $ for a.a. $(x,t)\in Q_T$, and
there are non-negative constants $\uk_2, \uk_3$ such that
\[
 -\uk_i\le
K_i(x,t)\le 0 \text{ for }i=2,3,
\]
for a.a. $(x,t)\in Q_T$,
\item
Hypothesis $\ref{ipotesi1}.2$ is satisfied with
\[
C_1=\left(TC_p^{\frac{(p-1)(m-1)+2}{(p-1)(m-1)}}\left(2T\|a\|_{L^\infty(Q_T)}^2
\right)^{\frac{2}{(p-1)(m-1)}}\right)^{\frac{1}{2}} \] and
\[ C_2=\left(TC_q^{\frac{(q-1)(n-1)+2}{(q-1)(n-1)}}\left(2T\|b\|_{L^\infty(Q_T)}^2
\right)^{\frac{2}{(q-1)(n-1)}}\right)^{\frac{1}{2}},
\]
where $C_p$ and $C_q$ are as in \eqref{eq:bellecostanti1} and in
\eqref{eq:bellecostanti1bis}.
\end{enumerate}
Then problem \eqref{1} has a non-negative $T$-periodic solution $(u,
v)$.
\end{Corollary}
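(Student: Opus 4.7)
The plan is to invoke Theorem \ref{thm:generale}, so it suffices to verify the a priori $L^{2}$-bounds of Hypothesis \ref{ipotesi1}.1 with the stated constants $C_{1}, C_{2}$, since Hypothesis \ref{ipotesi1}.2 is part of the assumptions. The key observation I would exploit is that, since $K_{2}, K_{3} \le 0$, the coupling between the two equations disappears and the concave coupled inequality \eqref{riusata} of Theorem \ref{thm:noncoercivebruttecostanti} collapses into two decoupled bounds.

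First I would repeat the opening of the proof of Theorem \ref{thm:noncoercivebruttecostanti}. Assuming $\eu \not\equiv 0$ (hence $\eu > 0$ in $Q_{T}$ by Proposition \ref{positivity}), I multiply the first equation of \eqref{eq:rhovarepsilon} by $\eu^{3-p}$, integrate over $Q_{T}$, and pass to the limit in the Steklov averages. By $T$-periodicity the time-derivative contribution vanishes; the $K_{1}$-term has favorable sign and is discarded; the crucial new point is that the $K_{2}$-term is now non-positive and can also be discarded. The resulting inequality
\[
m^{p-1}(3-p)\left(\frac{p}{(p-1)(m-1)+2}\right)^{p} \iint_{Q_{T}} \left|\nabla \eu^{\frac{(p-1)(m-1)+2}{p}}\right|^{p} dx\,dt \le \|a\|_{L^{\infty}(Q_{T})} \iint_{Q_{T}} \eu^{2}\, dx\,dt
\]
no longer involves $\ev$.

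Next, I would follow the same chain of Poincar\'e and H\"older inequalities used in \eqref{3.28bis}--\eqref{3.29bis} of Theorem \ref{thm:noncoercivebruttecostanti}; the assumption $m(p-1)/(p+1) > 1$ ensures that the H\"older exponent $((p-1)(m-1)+2)/4$ appearing in the final time-integration step is admissible. Setting $U := \int_{0}^{T} \bigl(\int_{\Omega} \eu^{2}\, dx\bigr)^{2} dt$, this would yield directly
\[
U \le C_{p}^{\frac{(p-1)(m-1)+2}{(p-1)(m-1)}} \left(2T\|a\|_{L^{\infty}(Q_{T})}^{2}\right)^{\frac{2}{(p-1)(m-1)}},
\]
with no occurrence of the analogous quantity $V$ for $\ev$. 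A last H\"older inequality in time then gives $\|\eu\|_{L^{2}(Q_{T})}^{2} \le \sqrt{TU} = C_{1}$, matching the stated constant. The symmetric computation on the second equation of \eqref{eq:rhovarepsilon}, using $K_{3} \le 0$ in place of $K_{2} \le 0$, would produce $\|\ev\|_{L^{2}(Q_{T})}^{2} \le C_{2}$.

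I expect no serious obstacle: all the technical work has been done in Theorem \ref{thm:noncoercivebruttecostanti}, and the competitive sign condition simply bypasses the implicit inequality \eqref{riusata} that required a concavity-based inversion there. Once the bounds on $\|\eu\|_{L^{2}(Q_{T})}^{2}$ and $\|\ev\|_{L^{2}(Q_{T})}^{2}$ are in hand, Hypothesis \ref{ipotesi1}.1 is verified and Theorem \ref{thm:generale} immediately produces the desired non-negative $T$-periodic solution $(u,v)$.
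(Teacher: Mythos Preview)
Your proposal is correct and is exactly the intended argument: the corollary is obtained by rerunning the proof of Theorem~\ref{thm:noncoercivebruttecostanti} with $\ok_2=\ok_3=0$, so that the $K_2$- and $K_3$-terms drop out, the inequality \eqref{riusata} decouples into $U\le\alpha_p$ and $V\le\alpha_q$, and the final H\"older step in time gives the stated constants $C_1,C_2$. The paper gives no separate proof, treating the result as an immediate specialization, which is precisely what you have written out.
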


The proof of Theorem \ref{thm:noncoercivebruttecostanti} suggests
the following result when
$\min\displaystyle{\left\{m\frac{p-1}{p+1},
n\frac{q-1}{q+1}\right\}}=1.$

\begin{Theorem}\label{normaldiff}
Suppose that Assumptions $2$ and $3$ of Theorem
$\ref{thm:noncoercivebruttecostanti}$ hold true. If, in addition,
$$\min\displaystyle{\left\{m\frac{p-1}{p+1},
n\frac{q-1}{q+1}\right\}}=1$$
 and
\begin{equation}\label{sopra}
C_p^{\frac{(p-1)(m-1)+2}{(p-1)(m-1)}}\left(2\ok_2^2C_q^{\frac{(q-1)(n-1)+2}{(q-1)(n-1)}}\right)^{\frac{2}{(p-1)(m-1)}}
\left(2\ok_3^2\right)^{\frac{4}{(q-1)(n-1)(p-1)(m-1)}} <1,
\end{equation}
then problem \eqref{1} has a non-negative $T$-periodic solution $(u,
v)$.
\end{Theorem}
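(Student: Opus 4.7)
The plan is to retrace the argument of Theorem~\ref{thm:noncoercivebruttecostanti} essentially verbatim up to the point at which we obtain the self-bound on $U:=\int_0^T(\int_\Omega \eu^2dx)^2dt$. The standing hypotheses here (non-negativity of $K_1,K_4$, essential upper bounds $\ok_2,\ok_3$ on $K_2,K_3$, and the inequality $\min\{m(p-1)/(p+1),n(q-1)/(q+1)\}\ge 1$, which is equivalent to $\min\{(m-1)(p-1),(n-1)(q-1)\}\ge 2$ and in particular ensures that the H\"{o}lder exponent $s:=[(p-1)(m-1)+2]/4$ used in the previous proof still satisfies $s\ge 1$) are identical to what was previously required, so the same Poincar\'{e}--H\"{o}lder manipulations (multiplying the first equation of \eqref{eq:rhovarepsilon} by $\eu^{3-p}$, passing to the Steklov averages, exploiting $T$-periodicity, and coupling with the analogous estimate for $\ev$) reproduce \eqref{riusata} in the form
\[
U \le \alpha+\beta\,U^{\frac{4}{(q-1)(n-1)(p-1)(m-1)}},
\]
with exactly the same $\alpha,\beta$ as before.

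The crucial difference is that the hypothesis $\min\{m(p-1)/(p+1),n(q-1)/(q+1)\}=1$ now admits the borderline situation $(m-1)(p-1)(n-1)(q-1)=4$, in which the exponent above equals $1$ and the previous strict-concavity argument breaks down: the inequality degenerates into the affine bound $U\le\alpha+\beta\,U$. At this point the plan is to absorb the $\beta U$ term on the left, which is legitimate exactly because condition \eqref{sopra} is precisely the statement that the multiplicative constant $\beta$ is strictly smaller than $1$. This yields the finite a priori bound $U\le \alpha/(1-\beta)$. In the remaining subcase in which $\min =1$ but $(m-1)(p-1)(n-1)(q-1)>4$, the exponent is strictly less than $1$ and the earlier strict-concavity argument of Theorem~\ref{thm:noncoercivebruttecostanti} still delivers a finite bound, so both subcases of $\min = 1$ are covered.

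A fully symmetric computation produces the corresponding inequality for $V:=\int_0^T(\int_\Omega\ev^2dx)^2dt$, where the analogous constant is bounded away from $1$ by the same condition \eqref{sopra} (by symmetry of the exponents $p,q$ and $m,n$ in it). A final H\"{o}lder estimate gives $\|\eu\|_{L^2(Q_T)}^2\le T^{1/2}U^{1/2}=:C_1$ and $\|\ev\|_{L^2(Q_T)}^2\le T^{1/2}V^{1/2}=:C_2$, with $C_1,C_2$ explicit and independent of $\varepsilon$. These are exactly the constants required to verify Hypothesis~\ref{ipotesi1} (statement~2 on the sign/size of $\Int a\,e_p^p-\uk_2 C_2$ and $\Int b\,e_q^q-\uk_3 C_1$ is trivially satisfied since here $\uk_2=\uk_3=0$), and therefore Theorem~\ref{thm:generale} applies and produces a non-negative $T$-periodic solution $(u,v)$ with both components non-trivial.

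The only genuinely new point, and the step I would expect to be the main obstacle from a conceptual viewpoint, is the borderline linear case: without \eqref{sopra} the affine self-map $U\mapsto\alpha+\beta U$ has no finite fixed point, so the a priori $L^2$-estimate simply collapses. The content of \eqref{sopra} is precisely the sharp threshold that rules out this degeneration, and once this observation is in place the rest of the proof is a mechanical adaptation of the calculations already carried out in Theorem~\ref{thm:noncoercivebruttecostanti}.
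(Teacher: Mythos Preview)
Your approach is essentially the paper's: retrace the proof of Theorem~\ref{thm:noncoercivebruttecostanti} up to \eqref{riusata}, note that under $\min=1$ the self-bound may degenerate to the affine inequality $U\le\alpha+\beta U$, and absorb $\beta U$ using \eqref{sopra} (which is exactly $\beta<1$). Your explicit handling of the subcase $(m-1)(p-1)(n-1)(q-1)>4$ (where the exponent is still strictly below $1$ and the concavity argument survives) is in fact more careful than the paper's very terse proof, which only spells out the linear case.

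There is one slip worth flagging: you assert that $\uk_2=\uk_3=0$, but Assumption~2 of Theorem~\ref{thm:noncoercivebruttecostanti} only requires $K_i\le\ok_i$ for $i=2,3$, not $K_i\ge 0$, so the negative parts of $K_2,K_3$ need not vanish. Fortunately you do not need that claim at all: Hypothesis~\ref{ipotesi1}.2 is precisely the content of Assumption~3, which is taken as a standing hypothesis in the statement (with $C_1,C_2$ understood as the bounds arising from the new estimate $U\le\alpha/(1-\beta)$, since the constants in \eqref{eq:bellecostanti} are undefined when the product equals~$4$). So your conclusion via Theorem~\ref{thm:generale} is correct, but the justification you give for Hypothesis~\ref{ipotesi1}.2 should be replaced by a direct appeal to Assumption~3.
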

\begin{proof}
First note that, if , for instance $n\frac{q-1}{q+1}=1$, then
$(q-1)(n-1)=2$, so that the expression in \eqref{sopra} can be
simplified. Now the proof proceeds as the one of Theorem
$\ref{thm:noncoercivebruttecostanti}$ up to inequality
\eqref{riusata}, which now reads
\[
U \le \alpha + \beta U,
\]
where $\beta$ is the left hand side of \eqref{sopra}. Since
$\beta<1$, we obtain the desired upper bound on $U$.
\end{proof}

\begin{rem}
Observe that the technique used to prove Theorem \ref{thm:generale}
(or Corollary \ref{cor}),  and the a priori estimates in $L^2(Q_T)$
can be adapted to prove analogous results if we consider system
\eqref{1} with $p, q \ge 2$, that is if we consider a double
degeneracy (or a single degenerate equation) as in \cite{fnp}, but
with a $p$-linear term in the right hand side.
\end{rem}

\section{A generalization in the competitive case}\label{generalizzazione}
The techniques used in the previous sections allow us to prove the
existence of a $T$-periodic non-negative solution $(u,v)$ with
non-trivial $u,v$ for the following system:
\begin{equation}\label{system1}
 \begin{cases}
 \begin{aligned}
u_t-\div(|\nabla u^m|^{p-2}\nabla
u^m)\!\!=\!\left(a(x,t)-\!\int_{\Omega}\!K_1(\xi,t)u^{\alpha}(\xi, t-\tau_1)d\xi\!+\!\int_{\Omega}\!K_2(\xi,t)v^{\alpha}(\xi, t-\tau_2)d\xi\right)u^{p-1}, \quad \text{in } Q_T,\\
v_t-\div(|\nabla v^n|^{q-2}\nabla
v^n)\!\!=\!\left(b(x,t)+\!\int_{\Omega}\!K_3(\xi,t)u^{\alpha}(\xi, t-\tau_3)d\xi-\!\int_{\Omega}\!K_4(\xi,t)v^{\alpha}(\xi, t-\tau_4)d\xi\right)v^{q-1},  \quad\; \text{in } Q_T,\\
 \end{aligned}\\
u(x, t) = v(x,t)=0,\qquad\qquad \qquad \qquad\qquad \qquad\qquad  \;\; \text{for }(x,t) \in  \partial \Omega \times (0,T), \\
u(\cdot,0)= u(\cdot,T)\text{ and }v(\cdot,0)= v(\cdot,T), \\
\end{cases}
\end{equation}
where $\alpha \ge 1$,  $K_i(t,x)\le 0$ ($i=2,3$), and $m, n, p, q$,
$\tau_i$ ($i=1,2,3,4$), $a, b$ and $K_i$ ($i=1,4$), are as in
Hypotheses \ref{ipotesi}.
\subsection{The coexistence theorem}
As before, one can prove that Lemma \ref{compact} and Proposition
\ref{positivity} still hold for the associated nondegenerate singular $p$-Laplacian problem
\begin{equation}\label{system2}
\begin{cases}
\begin{aligned}
&\dfrac{\partial u}{\partial t}- \varepsilon \div(|\nabla u
|^{p-2}\nabla u) -\div(|\nabla u^m |^{p-2}\nabla u^m) = \left(
a(x,t) - \int_{\Omega}K_1(\xi,t)u^\alpha(\xi, t-\tau_1)d\xi +\right.
\\&
\left. \qquad \qquad \qquad \qquad \qquad \qquad \qquad \qquad\qquad
\qquad \qquad \qquad +\int_{\Omega}K_2(\xi,t)v^\alpha(\xi,
t-\tau_2)d\xi
\right) u^{p-1}, \quad \text{in } Q_T,\\
&\dfrac{\partial v}{\partial t}- \varepsilon \div(|\nabla
v|^{q-2}\nabla v) -\div(|\nabla v^n |^{q-2}\nabla v^n) = \left(
b(x,t) + \int_{\Omega}K_3(\xi,t)u^\alpha(\xi, t-\tau_3)d\xi -\right.
\\&
\left. \qquad \qquad \qquad \qquad \qquad \qquad \qquad \qquad
\qquad \qquad \qquad \qquad - \int_{\Omega}K_4(\xi,t)v^\alpha(\xi,
t-\tau_4)d\xi\right) v^{q-1},  \quad \text{in } Q_T,
\end{aligned}\\
u(\cdot,t)|_{\partial \Omega}= v(\cdot,t)|_{\partial \Omega}=0, \quad\text{for a.a.}\; t \in (0,T),\\
u (\cdot,0)=u(\cdot,T)\text{ and }v(\cdot,0)=v(\cdot,T),
\end{cases}
\end{equation}
where $\varepsilon>0$ is small enough. Moreover the next result
holds:
\begin{Proposition}\label{systemgeneralized}
There is a constant $R>0$ such that
\[
\|\eu  \|_{L^\infty(Q_T)},\|\ev   \|_{L^\infty(Q_T)}<R
\]
for all solution pairs $(\eu  ,\ev   )$ of
\[
(u,v)=G_{\varepsilon} \big(1, f(u^+,v^+), g(u^+,v^+)\big),
\]
and all $\varepsilon >0$. In
particular, one has that
\[
\deg\big((u,v)-G_{\varepsilon}\big(1, f(u^+,v^+),
g(u^+,v^+)\big),B_R,0\big)=1.
\]
Here $G_\varepsilon$ is defined as in Section
$\ref{regularizedproblem}$,
\[ f(u^+,v^+):=
\left(a-\int_{\Omega}K_1(\xi,\cdot)\,(u^+)^\alpha(\xi,
\cdot-\tau_1)d\xi+\int_{\Omega}K_2(\xi,\cdot)\,(v^+)^\alpha(\xi,
\cdot-\tau_2)d\xi\right)(u^+)^{p-1}
\]
and
\[
g(u^+,v^+):=\left(b+\int_{\Omega}K_3(\xi,\cdot)\,(u^+)^\alpha(\xi,
\cdot-\tau_3)d\xi-\int_{\Omega}K_4(\xi,\cdot)\,(v^+)^\alpha(\xi,
\cdot-\tau_4)d\xi\right)(v^+)^{q-1},
\]
\end{Proposition}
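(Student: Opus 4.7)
The plan is to exploit the competitive sign conditions $K_{2}, K_{3} \le 0$ to reduce each equation of \eqref{system2} to a scalar differential inequality to which Lemma~\ref{limitatezza} applies directly, bypassing entirely the need for any a priori $L^{2}$-estimates. This is in sharp contrast to Proposition~\ref{R-moser}, where $K_{2}$ could be positive and one had to control the coupling term $\int K_{2} \ev^{2} d\xi$ via a delicate differential inequality on $t \mapsto \|\ev(t)\|_{L^{2}(\Omega)}^{2}$ and Hypothesis~\ref{ipotesi1}.1. Here the coupling terms have the ``correct'' sign and can simply be discarded, which also explains why the conclusion is claimed for all $\varepsilon > 0$, not only for $\varepsilon$ small.

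Concretely, I would first assume $\eu \ne 0$, so that $\eu > 0$ and $\ev \ge 0$ in $Q_{T}$ by Proposition~\ref{positivity} (which holds for \eqref{system2} by the same argument). Since $K_{1} \ge 0$ and $K_{2} \le 0$ a.e.\ in $Q_{T}$, and since $\eu, \ev \ge 0$ and $\alpha \ge 1$, the first equation of \eqref{system2} yields the pointwise inequality
\[
\dfrac{\partial \eu}{\partial t} - \varepsilon \div(|\nabla \eu|^{p-2}\nabla \eu) - \div(|\nabla \eu^{m}|^{p-2}\nabla \eu^{m}) \le \|a\|_{L^{\infty}(Q_{T})}\, \eu^{p-1} \quad \text{in } Q_{T},
\]
regardless of the value of $\alpha$. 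Lemma~\ref{limitatezza}, applied with $K := \|a\|_{L^{\infty}(Q_{T})}$, then furnishes a constant $R_{1} > 0$ independent of $\varepsilon$ such that $\|\eu\|_{L^{\infty}(Q_{T})} \le R_{1}$. The symmetric argument on the second equation, using $K_{4} \ge 0$ and $K_{3} \le 0$, gives $\|\ev\|_{L^{\infty}(Q_{T})} \le R_{2}$ for some $R_{2} > 0$ independent of $\varepsilon$. Choosing any $R > \max\{R_{1}, R_{2}\}$ establishes the first claim.

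For the degree identity, the same pointwise inequalities persist along the homotopy
\[
(u, v) = G_{\varepsilon}\big(1, \rho\, f(u^{+}, v^{+}), \rho\, g(u^{+}, v^{+})\big), \quad \rho \in [0, 1],
\]
because multiplying the right-hand sides by $\rho \in [0, 1]$ only weakens the upper bound on the forcing terms while preserving the favorable signs of the nonlocal contributions. Hence no solution of the homotopy equation touches $\partial B_{R}$, and the homotopy invariance of the Leray--Schauder degree yields
\[
\deg\big((u, v) - G_{\varepsilon}(1, f(u^{+}, v^{+}), g(u^{+}, v^{+})), B_{R}, 0\big) = \deg\big((u, v) - G_{\varepsilon}(1, 0, 0), B_{R}, 0\big) = \deg\big((u, v), B_{R}, 0\big) = 1,
\]
the last equality because $G_{\varepsilon}(1, 0, 0) = (0, 0)$.

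The only subtle point, and really the sole obstacle, is verifying that Lemma~\ref{limitatezza} is applicable verbatim: its statement requires a right-hand side of the form $K u^{p-1}$ with $K$ independent of $\varepsilon$, and we must indeed exhibit such a $K$. This is immediate here, since $\|a\|_{L^{\infty}(Q_{T})}$ (resp.\ $\|b\|_{L^{\infty}(Q_{T})}$) does not depend on $\varepsilon$ and the exponent $\alpha$ of the nonlocal terms is irrelevant after they are discarded thanks to their sign. This explains the structural simplification relative to Proposition~\ref{R-moser} and why Hypothesis~\ref{ipotesi1}.1 is not needed in the present competitive setting.
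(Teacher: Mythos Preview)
Your proposal is correct and follows essentially the same route as the paper: both use the competitive sign conditions $K_{1}\ge 0$, $K_{2}\le 0$ (and symmetrically for the second equation) to reduce directly to the scalar inequality $\partial_{t}\eu-\varepsilon\div(|\nabla\eu|^{p-2}\nabla\eu)-\div(|\nabla\eu^{m}|^{p-2}\nabla\eu^{m})\le\|a\|_{L^{\infty}(Q_{T})}\eu^{p-1}$, then invoke Lemma~\ref{limitatezza} with $K=\|a\|_{L^{\infty}(Q_{T})}$, and finally obtain the degree identity via the homotopy $\rho\mapsto G_{\varepsilon}(1,\rho f,\rho g)$ exactly as in Proposition~\ref{R-moser}. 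Your additional remarks explaining why Hypothesis~\ref{ipotesi1}.1 is unnecessary here and why the bound is uniform in all $\varepsilon>0$ are accurate and add useful context that the paper leaves implicit.
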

\begin{proof}
By the first equation of \eqref{system2}, we have
\[
\dfrac{\partial \eu  }{\partial t}-\varepsilon\div(|\nabla
\eu|^{p-2}\nabla \eu)- \div(\nabla \eu^m|\nabla \eu^m |^{p-2})\le
K\eu^{p-1}  ,
\]
where $K:=\|a\|_{L^\infty(Q_T)}$. By Lemma \ref{limitatezza}, using the
Steklov averages $(\eu  )_h\in H^1(Q_{T- \delta})$, $\delta,h>0$,
and the fact that $(\eu)_h$ converges to $\eu$ in $L^\infty(Q_T)$,
we conclude that $\|\eu \|_{L^\infty(Q_T)}\le R_1$ for some $R_1>0$
independent of $\varepsilon$.  Analogously, $\|\ev
\|_{L^\infty(Q_T)}\le R_2$ for some constant $R_2>0$. Therefore it
is enough to choose $R>\max\{R_1, R_2\}$.

The second part of the Proposition follows as in the proof of Proposition \ref{R-moser}
\end{proof}

From now on we make the following assumption:
\begin{Assumption}\label{ipotesi2} The functions $a$ and $b$ are
such that \[\displaystyle \min
\left\{\dfrac{1}{T}\iint_{Q_T}a(x,t)e_p^p(x)dxdt-\dfrac{\uk_2C_2}{T},
\dfrac{1}{T}\iint_{Q_T}b(x,t)e_q^q(x)dxdt-\dfrac{\uk_3C_1}{T}\right\}>0.
\]
Here $\uk_2, \uk_3$ are as in Hypotheses \ref{ipotesi}.3, $\mu_p$,
$\mu_q$, $e_p$  and $e_q$ are as in Section
\ref{regularizedproblem}.
\end{Assumption}
As before, take $\varepsilon$ in $(0,\varepsilon_0)$, where $\varepsilon_0$
is such that
\[
\begin{aligned}
\theta(C_1, C_2):= \min&\left\{\frac{1}{T}\Int  a(x,t)e_p^p(x)
dxdt -\varepsilon_0 \mu_p-\dfrac{\uk_2C_2}{T}, \right.\\
&\left. \frac{1}{T}\Int b(x,t)e_q^q(x) dxdt -\varepsilon_0
\mu_q-\dfrac{\uk_3C_1}{T}\right\} >0.
\end{aligned}
\]
Then, Lemma \ref{stima-gradiente} and Proposition \ref{r} still hold
with
\[
M_1 := \left(\frac{\|a\|_{L^\infty(Q_T)}
|Q_T|^{\frac{1}{\beta'}}\left(
\frac{1}{\mu_p}\right)^{\frac{1}{\beta}}[(p-1)(m-1)-s)]^p}{
[m(p-1)]^{p-1}[(p-1)(m-p)-ps)] }\right)^{\frac{\beta}{p(\beta-1)}},
\]
\[
M_2 := \left(\frac{\|b\|_{L^\infty(Q_T)}
|Q_T|^{\frac{1}{\delta'}}\left(
\frac{1}{\mu_q}\right)^{\frac{1}{\delta}}[(q-1)(n-1)-s)]^q}{
[n(q-1)]^{q-1}[(q-1)(n-q)-qs)]
}\right)^{\frac{\delta}{q(\delta-1)}},
\]
and
\[
\begin{aligned}
r_0:=\min &\left\{\left(\dfrac{\Int  a(x,t) e_p^p(x)dxdt -
\varepsilon_0T\mu_p}{D_1}\right)^{\frac{1}{\alpha}}\!,
\left(\dfrac{\Int
 a(x,t)e_p^p(x) dxdt -
\varepsilon_0T\mu_p}{D_1}\right)^{\frac{1}{s}}\!,\right.\\
&\left. \quad\left(\dfrac{\Int b(x,t)e_q^q(x) dxdt -
\varepsilon_0T\mu_q}{D_2}\right)^{\frac{1}{\alpha}}\!,
\left(\dfrac{\Int
 b(x,t)e_q^q(x) dxdt -
\varepsilon_0T\mu_q}{D_2}\right)^{\frac{1}{s}}\right\},
\end{aligned}
\]
where $\beta$, $\beta'$, $\delta$,$\delta'$, $D_1$ and $D_2$ are as
in Section \ref{regularizedproblem}. Proceeding as in Theorem
\ref{thm:generale}, one can prove that the next coexistence result
holds:
\begin{Theorem}\label{thm:generale1}
 Assume that there exist two positive constants $C_1,C_2$ such that
 for all $\varepsilon>0$ and all  solution pairs $(\eu  ,\ev   )$ of
\eqref{system2} it results
\[
 \|\eu \|_{L^\alpha(Q_T)}^\alpha\le
C_1\text{ and }\|\ev \|_{L^\alpha(Q_T)}^\alpha\le C_2.
\]
Then, problem \eqref{system1} has a $T$-periodic non-negative
solution $(u,v)$ with non-trivial $u,v$.
\end{Theorem}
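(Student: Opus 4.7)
The plan is to replicate the Leray--Schauder topological degree argument of Theorem~\ref{thm:generale}, since all the auxiliary estimates have already been restated for the generalized system in the paragraphs preceding the theorem. Specifically, Proposition~\ref{systemgeneralized} provides a uniform $L^\infty$-bound $R$ for all solution pairs of $(u,v)=G_{\varepsilon}(1,f(u^+,v^+),g(u^+,v^+))$ and the value $1$ for the Leray--Schauder degree on the large ball $B_R$. Under the hypothesized $L^\alpha$-bounds $C_1,C_2$ and Hypothesis~\ref{ipotesi2}, the modified versions of Lemma~\ref{stima-gradiente} and Proposition~\ref{r} (with $M_1,M_2,r_0$ as redefined just before the theorem) then give a radius $r\in(0,r_0)$ and a threshold $\varepsilon_0>0$ such that the degree on the small ball $B_r$ vanishes for $\varepsilon\in(0,\varepsilon_0)$.

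By the excision property of the degree, the degree on the annulus $B_R\setminus\overline{B}_r$ equals $1$, so for each $\varepsilon\in(0,\varepsilon_0)$ one obtains a fixed point $(u_\varepsilon,v_\varepsilon)$ of the map $(u,v)\mapsto G_\varepsilon(1,f(u^+,v^+),g(u^+,v^+))$ with both components positive in $Q_T$. I would then reproduce verbatim the continuum argument of Theorem~\ref{thm:generale}: perturb the right-hand side by the parameter $\sigma\in[0,1]$ and use continuity of the degree to produce a connected family $\mathcal{S}_\varepsilon$ of solution triples $(\sigma,u_\varepsilon,v_\varepsilon)$. Along this family the $L^\alpha$-norms remain close to $C_1,C_2$, and the strict positivity of $\theta(C_1+\nu,C_2+\nu)$ for small $\nu>0$ allows the Picone-type inequality used in Proposition~\ref{r} to yield a uniform lower bound $\lambda_0>0$ for both $\|u_\varepsilon\|_{L^\infty(Q_T)}$ and $\|v_\varepsilon\|_{L^\infty(Q_T)}$; the only modification with respect to the proof of Proposition~\ref{r} is the replacement of $r^2$ by $r^\alpha$ on the right-hand side, which is harmless.

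The passage $\varepsilon\to 0$ follows the blueprint of Theorem~\ref{thm:generale}: the interior and up-to-the-boundary H\"older estimates of~\cite{i2,i3}, combined with Ascoli--Arzel\`a, extract a subsequence of $(u_\varepsilon,v_\varepsilon)$ converging uniformly in $\overline{Q}_T$ to a limit $(u,v)$ with $\lambda_0\le\|u\|_{L^\infty(Q_T)},\|v\|_{L^\infty(Q_T)}\le R$. Testing the equations with $u_\varepsilon^m$ and $v_\varepsilon^n$ produces uniform bounds on $\nabla u_\varepsilon^m$ in $L^p(Q_T)$ and on $\nabla v_\varepsilon^n$ in $L^q(Q_T)$, hence weak convergence in the respective Sobolev spaces. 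The $\varepsilon|\nabla u_\varepsilon|^{p-2}\nabla u_\varepsilon$ term vanishes by the same energy-type estimate as in the proof of Theorem~\ref{thm:generale}, while the nonlocal integrals (now involving $u^\alpha$ and $v^\alpha$) pass to the limit by dominated convergence from the uniform $L^\infty$-bounds. The main obstacle, as in Theorem~\ref{thm:generale}, is identifying the weak limit $H$ of $|\nabla u_\varepsilon^m|^{p-2}\nabla u_\varepsilon^m$ with $|\nabla u^m|^{p-2}\nabla u^m$: I would handle this via Minty's monotonicity trick, testing the variational inequality with $w=u^m\pm\lambda\varphi$ and letting $\lambda\to 0^+$. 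Since the substitution of the quadratic nonlocal terms by $\alpha$-ones affects only the lower-order source terms, which converge strongly, this identification goes through exactly as before, and the limit pair $(u,v)$ is a non-trivial non-negative $T$-periodic solution of~\eqref{system1}.
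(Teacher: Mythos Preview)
Your proposal is correct and takes essentially the same approach as the paper, which simply states that one proceeds as in Theorem~\ref{thm:generale} after noting that Proposition~\ref{systemgeneralized}, Hypothesis~\ref{ipotesi2}, and the modified versions of Lemma~\ref{stima-gradiente} and Proposition~\ref{r} (with the constants $M_1,M_2,r_0$ redefined as indicated) continue to hold. Your outline in fact spells out in more detail than the paper does how the argument of Theorem~\ref{thm:generale} carries over, including the replacement of $r^2$ by $r^\alpha$ and the passage to the limit via Minty's trick.
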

Obviously, the previous result holds also for a single equation. In
particular, we have the following corollary:
\begin{Corollary}\label{corollary} Consider the problem
\begin{equation}\label{equazione1bis}
 \begin{cases}
\displaystyle u_t-{\rm div}(|\nabla u^m|^{p-2}\nabla
u^m)=\left(a(x,t)-\!\int_{\Omega}\!K(\xi,t)u^\alpha(\xi, t-\tau)d\xi\right)u^{p-1}, & \text{in } Q_T,\\
u(x, t) = 0,& \text{for }(x,t) \in  \partial \Omega \times (0,T), \\
u(\cdot,0)= u(\cdot,T),
\end{cases}
\end{equation}
and assume that
\begin{enumerate}
\item the exponents $p, m$  are such that $p \in (1,2)$ and $m
>p$,
\item the delay $\tau\in(0,+\infty)$,
\item the functions $a$ and $K$ belong to $L^\infty(Q_T)$, are
extended to $\Omega\times\R$ by $T$-periodicity and are non-negative
for a.a. $(x,t)\in Q_T$,
\item
 there exists a positive constant $C$
such that
 for all $\varepsilon>0$ and all solutions $\eu$ of
\[
u=G_{\varepsilon} \big(1, f(u^+)\big),
\]
it results \[ \|\eu \|_{L^\alpha(Q_T)}^\alpha\le C.
\]
\end{enumerate}
Then problem \eqref{equazione1bis} has a $T$-periodic non-negative
and non-trivial solution.
\end{Corollary}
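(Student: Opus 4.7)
The plan is to mimic the strategy of Theorem~\ref{thm:generale1} restricted to a single equation, working with the scalar approximating problem
\[
\dfrac{\partial u}{\partial t} - \varepsilon\,\div(|\nabla u|^{p-2}\nabla u) - \div(|\nabla u^m|^{p-2}\nabla u^m) = \left(a(x,t) - \int_\Omega K(\xi,t)u^\alpha(\xi, t-\tau)\,d\xi\right) u^{p-1}
\]
with Dirichlet and $T$-periodic boundary conditions, and then passing to the limit $\varepsilon \to 0^+$. I would first set up the scalar solution map $G_\varepsilon(\sigma,\cdot)$ exactly as in Section~\ref{regularizedproblem}; the single-equation analogues of Lemma~\ref{compact} and Proposition~\ref{positivity} continue to apply, so $G_\varepsilon(1,\cdot)$ is compact on $L^\infty(Q_T)$ and any non-trivial fixed point $\eu$ of $u = G_\varepsilon(1,f(u^+))$ satisfies $\eu > 0$ in $Q_T$.

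Next I would establish the two a priori estimates needed for the Leray--Schauder degree argument. The upper bound is supplied by Proposition~\ref{systemgeneralized} (scalar case): since $K \ge 0$, one has $\partial_t \eu - \varepsilon\div(|\nabla \eu|^{p-2}\nabla\eu) - \div(|\nabla \eu^m|^{p-2}\nabla\eu^m) \le \|a\|_{L^\infty(Q_T)}\eu^{p-1}$, and Lemma~\ref{limitatezza} yields $\|\eu\|_{L^\infty(Q_T)} \le R$ uniformly in $\varepsilon$, with degree $1$ on the ball $B_R$. For the lower bound I would adapt Proposition~\ref{r}: multiplying the equation by $\phi^p/\eu^{p-1}$, invoking the Allegretto--Huang generalized Picone identity with $\phi \to e_p$, and combining with the gradient bound of Lemma~\ref{stima-gradiente} and the hypothesis $\|\eu\|_{L^\alpha(Q_T)}^\alpha \le C$ produces a suitable $r_0 > 0$ such that $\|\eu\|_{L^\infty(Q_T)} \ge r_0$ for all solutions and all $\varepsilon \in (0,\varepsilon_0)$, together with a vanishing degree on each $B_r$ with $r < r_0$. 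Excision then gives a fixed point $\eu_\varepsilon$ in the annulus $B_R \setminus \overline{B_{r_0}}$ for each small $\varepsilon$.

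Finally I would pass to the limit $\varepsilon \to 0^+$. The uniform H\"older regularity from \cite{i2,i3}, together with Ascoli--Arzel\`a, yields a subsequence $\eu \to u$ uniformly in $\overline{Q_T}$ with $u \ge 0$ and $u \not\equiv 0$ thanks to the uniform lower bound $\lambda_0 > 0$. The energy estimate of type \eqref{costante}, applied to $\eu^m$, shows that $\eu^m$ is bounded in $L^p(0,T;W^{1,p}_0(\Omega))$, so $\eu^m \rightharpoonup u^m$ weakly in that space and $|\nabla \eu^m|^{p-2}\nabla \eu^m \rightharpoonup H$ weakly in $(L^{p/(p-1)}(Q_T))^N$. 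The main obstacle is the identification $H = |\nabla u^m|^{p-2}\nabla u^m$; I would resolve it by the same Minty-type monotonicity argument used at the end of the proof of Theorem~\ref{thm:generale}, exploiting the positive definiteness of $H'(Y) = |Y|^{p-2}I + (p-2)|Y|^{p-4}Y\otimes Y$ for $p > 1$ and testing with $w = u^m \pm \lambda \varphi$, $\lambda \to 0^+$. Dominated convergence handles the remaining nonlocal and lower-order terms, whereas $\varepsilon \iint |\nabla \eu|^{p-2}\nabla\eu \cdot \nabla\varphi \to 0$ by the same H\"older estimate used in \eqref{limite1}, completing the proof.
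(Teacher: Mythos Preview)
Your proposal is correct and follows precisely the route the paper intends: the paper states this corollary without proof, noting only that ``the previous result holds also for a single equation,'' i.e.\ it is the scalar specialization of Theorem~\ref{thm:generale1}, whose proof in turn mimics that of Theorem~\ref{thm:generale}. Your outline---upper bound via Proposition~\ref{systemgeneralized} and Lemma~\ref{limitatezza}, lower bound via the Picone-type argument of Proposition~\ref{r} together with Lemma~\ref{stima-gradiente}, and passage to the limit through the uniform H\"older estimates and the Minty monotonicity trick---matches this exactly; the only minor imprecision is that in the scalar case (no cross-coupling, $K\ge0$) the constant $r_0$ in the lower-bound step does not actually depend on the $L^\alpha$ a~priori bound $C$, which instead plays its role in the continuum/limiting argument as in Theorem~\ref{thm:generale}.
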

\subsection{A priori bounds in $\boldsymbol{L^\alpha(Q_T)}$: the coercive and the non coercive cases}\label{sec:aprioribounds1}
In this subsection we apply Theorem \ref{thm:generale1} by looking
for explicit a priori bounds in $L^\alpha(Q_T)$ for the solutions of
\eqref{system2} in the ''coercive case" and in the ''non-coercive
case".
\begin{Theorem}\label{thm:coercivo1}
Assume that
\begin{enumerate}
\item Hypotheses $\ref{ipotesi}$ are satisfied,
\item
there exist constants $\uk_i >0$, $i=1,4$,  such that
\[
K_i(x,t)\ge \uk_i \text{ for }i=1,4,
\]
for a.a. $(x,t)\in Q_T$,
\item
$ K_i(x,t)\le 0 \text{ for }i=2,3, $ for a.a. $(x,t)\in Q_T$,
\item Hypothesis $\ref{ipotesi1}.2$ is satisfied with
\[
\begin{aligned}
C_1= &\dfrac{T}{\uk_1}\|a\|_{L^\infty(Q_T)}
\\
C_2= &\dfrac{T}{\uk_4}\|b\|_{L^\infty(Q_T)}.
\end{aligned}
\]
\end{enumerate}
 Then problem \eqref{1} has a non-negative $T$-periodic solution
$(u,v)$ with non-trivial $u,v$.
\end{Theorem}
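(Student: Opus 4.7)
The plan is to apply Theorem~\ref{thm:generale1}, which requires establishing the explicit $L^\alpha$-bounds $\|\eu\|_{L^\alpha(Q_T)}^\alpha\le C_1$ and $\|\ev\|_{L^\alpha(Q_T)}^\alpha\le C_2$ for every solution pair $(\eu,\ev)$ of the regularized system \eqref{system2}. The argument follows the pattern of Theorem~\ref{thm:coercivo}, with the power $2$ in the nonlocal integrands simply replaced by $\alpha$; the sign condition $K_2,K_3\le 0$ is what uncouples the two estimates, while the coercivity $K_1\ge\uk_1>0$ delivers the explicit constant on the right-hand side.

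Concretely, assume first $\eu\not\equiv 0$, so that $\eu>0$ in $Q_T$ by Proposition~\ref{positivity}. Multiplying the first equation of \eqref{system2} by $\eu$, integrating over $\Omega$ and passing to the limit in the Steklov averages (exactly as in the proof of Theorem~\ref{thm:coercivo}), one obtains pointwise in $t$
\[
\frac{\frac{1}{2}\frac{d}{dt}\int_\Omega\eu^2\,dx+\varepsilon\int_\Omega|\nabla\eu|^p\,dx+m^{p-1}\int_\Omega\eu^{(m-1)(p-1)}|\nabla\eu|^p\,dx}{\int_\Omega\eu^p\,dx}\le \|a\|_{L^\infty(Q_T)}-\uk_1\int_\Omega\eu^\alpha(\xi,t-\tau_1)\,d\xi,
\]
where we have discarded the $\ev$-term thanks to $K_2\le 0$, bounded $K_1$ below by $\uk_1$, and used $\int_\Omega\eu^p\,dx>0$ to divide safely. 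Integrating over $[0,T]$, the left-hand side is non-negative by the very same reasoning adopted in the proof of Theorem~\ref{thm:coercivo} (the gradient quotients are non-negative and the time-derivative contribution is handled via the $T$-periodicity of $\eu$), while the $T$-periodicity of $\eu$ allows to identify $\int_0^T\!\int_\Omega\eu^\alpha(\xi,t-\tau_1)\,d\xi\,dt=\|\eu\|_{L^\alpha(Q_T)}^\alpha$. We conclude $\uk_1\|\eu\|_{L^\alpha(Q_T)}^\alpha\le T\|a\|_{L^\infty(Q_T)}$, which is exactly the bound $C_1$ of the statement. The case $\eu\equiv 0$ is trivial, and an entirely symmetric computation based on $K_3\le 0$ and $K_4\ge\uk_4$ yields $\|\ev\|_{L^\alpha(Q_T)}^\alpha\le T\|b\|_{L^\infty(Q_T)}/\uk_4=C_2$.

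Once these $L^\alpha$-bounds are at hand, a direct application of Theorem~\ref{thm:generale1} produces the non-trivial, non-negative $T$-periodic solution pair $(u,v)$ claimed in the statement. The only delicate point in the whole procedure is inherited from Theorem~\ref{thm:coercivo}, namely that the time-integrated left-hand side of the displayed inequality is non-negative in spite of the presence of the time derivative; since it has already been dealt with there, it raises no new difficulty here. The exponent change from $u^2$ to $u^\alpha$ inside the nonlocal integrand is harmless, because the multiplicative test function $\eu$ and the subsequent division by $\int_\Omega\eu^p\,dx$ are indifferent to the power of $\eu$ appearing inside the nonlocal term; that power is simply recovered as an $L^\alpha$-norm after integration in time, thanks to the $T$-periodicity.
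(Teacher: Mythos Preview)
Your proposal is correct and follows essentially the same route as the paper: you reduce the statement to an application of Theorem~\ref{thm:generale1} by establishing the $L^\alpha$-bounds $\|\eu\|_{L^\alpha(Q_T)}^\alpha\le C_1$ and $\|\ev\|_{L^\alpha(Q_T)}^\alpha\le C_2$ via the multiplication-by-$\eu$ (resp.\ $\ev$) argument of Theorem~\ref{thm:coercivo}, using $K_2,K_3\le 0$ to decouple the two estimates and $K_i\ge\uk_i$ ($i=1,4$) to extract the explicit constants. The only cosmetic difference is that the paper writes the denominator as $\bigl(\int_\Omega\eu^2\,dx\bigr)^{p/2}$ (as in \eqref{eq:log}) rather than $\int_\Omega\eu^p\,dx$, and records an intermediate eigenvalue-type lower bound $\varepsilon\lambda_{p,2}T$ for the gradient quotient before concluding; neither alters the argument or the resulting constants.
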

\begin{proof}
We just need to show that $\|\eu \|_{L^\alpha(Q_T)}^\alpha\le C_1$
and $\|\ev \|_{L^\alpha(Q_T)}^\alpha\le C_2$ for any solution $(\eu
,\ev )$ of \eqref{system2}. Proceeding as in Theorem
\ref{thm:coercivo}, one has
\[
\varepsilon\int_0^T\frac{\int_{\Omega}|\nabla\eu |^p
dx}{\left(\int_{\Omega}\eu ^2 dx\right)^{\frac{p}{2}}}dt \le
|\Omega|^{1-\frac{p}{2}}\left(T\|a\|_{L^\infty(Q_T)}-\uk_1\|\eu
\|_{L^\alpha(Q_T)}^\alpha\right)
\]
and again
\[
\varepsilon\lambda_{p,2} T\le
|\Omega|^{1-\frac{p}{2}}\left(T\|a\|_{L^\infty(Q_T)}-\uk_1\|\eu
\|_{L^\alpha(Q_T)}^\alpha\right).
\]
 The same procedure, when applied to the second
equation of \eqref{2}, leads to
\[
\varepsilon \lambda_{q,2} T\le
|\Omega|^{1-\frac{q}{2}}\left(T\|b\|_{L^\infty(Q_T)}-\uk_4\|\ev
\|_{L^\alpha(Q_T)}^\alpha\right),
\]
 Hence, we have
\[
\begin{aligned}
\|\eu\|_{L^\alpha(Q_T)}^\alpha &\le
\dfrac{T\|a\|_{L^\infty(Q_T)}}{\uk_1}\,,\\
\|\ev\|_{L^\alpha(Q_T)}^\alpha &\le
\dfrac{T\|b\|_{L^\infty(Q_T)}}{\uk_4}.
\end{aligned}
\]
\end{proof}

\begin{Theorem}\label{thm:noncoerciveweakcompetitive1}
Assume that
\begin{enumerate}
\item Hypotheses $\ref{ipotesi}$ are satisfied,
\item $ K_i(x,t)\le 0 \text{ for }i=2,3, $ for a.a. $(x,t)\in Q_T$,
\item Hypothesis $\ref{ipotesi1}.2$ is satisfied with
\[
C_1=\frac{|Q_T|}{\mu_p^{\frac{\alpha}{m(p-1)}}}
\left(\frac{\|a\|_{L^\infty(Q_T)}(m(p-1)+\alpha)^p }{\alpha
m^{p-1}p^p }\right)^{\frac{\alpha}{m(p-1)}}\]and
\[
C_2=\frac{|Q_T|}{\mu_q^{\frac{\alpha}{n(q-1)}}}
\left(\frac{\|b\|_{L^\infty(Q_T)}(n(q-1)+\alpha)^q }{\alpha
n^{q-1}q^q }\right)^{\frac{\alpha}{n(q-1)}}.
\]
\end{enumerate}
Here $\ok_2, \ok_3,$ are as in Hypotheses $\ref{ipotesi}$. Then
problem \eqref{1} has a $T$-periodic non-negative solution $(u,v)$
with non-trivial $u,v$.
\end{Theorem}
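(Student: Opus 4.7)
The plan is to verify the single remaining hypothesis of Theorem~\ref{thm:generale1}, namely the uniform (in $\varepsilon$) a priori bounds $\|\eu\|_{L^\alpha(Q_T)}^\alpha\le C_1$ and $\|\ev\|_{L^\alpha(Q_T)}^\alpha\le C_2$ for every non-negative solution pair of the regularized system \eqref{system2}; once these are in hand the conclusion follows at once.

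I would transplant the energy computation of Theorem~\ref{thm:noncoerciveweakcompetitive} from the $L^2$ to the $L^\alpha$ level by using $\eu^{\alpha}$ as test function in the first equation of \eqref{system2}. After passing to the Steklov averages $(\eu)_h$ to legitimize the manipulation and integrating over $Q_T$, the time derivative vanishes by $T$-periodicity; the competitive assumption $K_2\le 0$ together with $K_1\ge 0$ makes both nonlocal contributions on the right-hand side non-positive, so only the $a$-term survives and there remains
\[
\alpha m^{p-1}\iint_{Q_T}\eu^{(m-1)(p-1)+\alpha-1}|\nabla\eu|^p\,dx\,dt
\le \|a\|_{L^\infty(Q_T)}\iint_{Q_T}\eu^{p+\alpha-1}\,dx\,dt.
\]
Setting $k:=(m(p-1)+\alpha)/p$, the left-hand side equals a constant multiple of $\iint_{Q_T}|\nabla\eu^{k}|^p\,dx\,dt$.

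Next, I would close the estimate by combining the Poincar\'e inequality applied slicewise to $\eu^k(\cdot,t)\in W^{1,p}_0(\Omega)$, which yields $\iint_{Q_T}\eu^{pk}\,dx\,dt\le \mu_p^{-1}\iint_{Q_T}|\nabla\eu^k|^p\,dx\,dt$, with a H\"older inequality relating $\iint_{Q_T}\eu^{p+\alpha-1}\,dx\,dt$ to $\iint_{Q_T}\eu^{pk}\,dx\,dt=\iint_{Q_T}\eu^{m(p-1)+\alpha}\,dx\,dt$; the admissibility of the corresponding H\"older exponent reduces to $m(p-1)\ge p-1$, automatic from $m>p>1$. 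The result is a scalar inequality $I\le A\,I^\lambda$ with $\lambda<1$ for $I:=\iint_{Q_T}|\nabla\eu^k|^p\,dx\,dt$, which can be solved explicitly for $I$. A final H\"older step expressing $\iint_{Q_T}\eu^\alpha\,dx\,dt$ as a power of $\iint_{Q_T}\eu^{m(p-1)+\alpha}\,dx\,dt$ then converts the bound on $I$ into the stated explicit bound on $\|\eu\|_{L^\alpha(Q_T)}^\alpha$; the same argument applied symmetrically to the second equation of \eqref{system2} yields the companion bound for $\ev$.

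The chief obstacle I foresee is purely algebraic bookkeeping: the cascade of H\"older and Poincar\'e exponents must be arranged so that, after eliminating $I$, the $|Q_T|$-exponent collapses to $1$ and the $\mu_p$-exponent simplifies to the claimed form, through the identity $(m-1)(p-1)+(p+\alpha-1)=m(p-1)+\alpha$. By contrast with Theorem~\ref{thm:noncoerciveweakcompetitive}, no restriction on the diffusion regime is needed here, since the sign hypotheses on $K_2,K_3$ alone supply the absorption that was otherwise required from the interplay of $m,p$ (resp.\ $n,q$); the argument therefore operates on the full parameter range permitted by Hypotheses~\ref{ipotesi}.
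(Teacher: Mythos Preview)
Your strategy matches the paper's: obtain the $L^\alpha$ a priori bound by testing the first equation of \eqref{system2} against a power of $\eu$, discarding the nonlocal terms via the sign hypotheses on $K_1,K_2$, and closing with Poincar\'e plus H\"older before invoking Theorem~\ref{thm:generale1}. The one difference is the exponent chosen for the test function: the paper multiplies by $\eu^{\alpha-p+1}$ rather than by $\eu^{\alpha}$, so that the right-hand side becomes $\|a\|_{L^\infty(Q_T)}\|\eu\|_{L^\alpha(Q_T)}^\alpha$ directly and the closure needs only a single H\"older step (with $r=(m(p-1)+\alpha)/\alpha$) instead of your two-stage route through $\iint_{Q_T}\eu^{p+\alpha-1}$. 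Your path is equally valid and yields an explicit $\varepsilon$-independent bound of the same structural form, but the ``bookkeeping'' concern you flag is real rather than cosmetic: with test function $\eu^\alpha$ the final exponent on $\mu_p^{-1}$ and on the bracketed quantity works out to $\alpha/((m-1)(p-1))$ rather than the $\alpha/(m(p-1))$ appearing in the stated $C_1$, so the constant you end up with is not literally the one in the statement. If recovering the displayed $C_1$ exactly is the goal, switch to the paper's test function $\eu^{\alpha-p+1}$.
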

\begin{proof}
We just need to show that $\|\eu \|_{L^\alpha(Q_T)}^\alpha\le C_1$
and $\|\ev \|_{L^\alpha(Q_T)}^\alpha\le C_2$ for any solution $(\eu
,\ev )$ of \eqref{system2}. Multiplying the first equation of
\eqref{system2} by $\eu^{\alpha-p+1} $, integrating in $Q_{T}$ and
passing to the limit, as $h\to0$, in the Steklov averages $(\eu )_h$, we obtain,
as in Proposition \ref{R-moser},
\[
\begin{aligned}
m^{p-1}\alpha\left(\frac{p}{m(p-1)+\alpha} \right)^p \Int
\left|\nabla\eu ^{\frac{m(p-1)+\alpha}{p}}\right|^pdxdt &\le
\varepsilon \alpha\Int u^{\alpha-1} |\nabla \eu|^pdxdt \\&+ \Int
|\nabla \eu^m|^{p-2}\nabla \eu^m \nabla \eu^\alpha dxdt\\& \le
\|a\|_{L^\infty(Q_T)}\|\eu \|_{L^\alpha(Q_T)}^\alpha,
\end{aligned}\]
by the $T$-periodicity of $\eu $ and the non-positivity of the
function $K_2$. Using the H\"{o}lder inequality, with $r:=
\frac{m(p-1)+\alpha}{\alpha}$, and the Poincar\'{e} inequality, one
has:
\[
\begin{aligned}
\Int\eu ^\alpha dxdt &\le
|Q_T|^{\frac{m(p-1)}{m(p-1)+\alpha}}\left(\Int \eu
^{m(p-1)+\alpha}dxdt\right)^{\frac{\alpha}{m(p-1)+\alpha}} =
|Q_T|^{\frac{m(p-1)}{m(p-1)+\alpha}}\left\|\eu
^{\frac{m(p-1)+\alpha}{p}}\right\|_{L^p(Q_T)}^{\frac{\alpha
p}{m(p-1)+\alpha}}\\& \le
|Q_T|^{\frac{m(p-1)}{m(p-1)+\alpha}}\left(\frac{1}{\sqrt[p]{\mu_p}}
\left\|\nabla \eu
^{\frac{m(p-1)+\alpha}{p}}\right\|_{L^p(Q_T)}\right )^{\frac{\alpha
p}{m(p-1)+\alpha}}.
\end{aligned}
\]
Thus
\[
\begin{aligned}
\|\eu \|_{L^\alpha(Q_T)}^\alpha&\le
|Q_T|^{\frac{m(p-1)}{m(p-1)+\alpha}}\left(\frac{1}{\sqrt[p]{\mu_p}}
\left\|\nabla \eu
^{\frac{m(p-1)+\alpha}{p}}\right\|_{L^p(Q_T)}\right)^{\frac{\alpha
p}{m(p-1)+\alpha}}\\& \le
\frac{|Q_T|^{\frac{m(p-1)}{m(p-1)+\alpha}}}{\mu_p^{\frac{\alpha}{m(p-1)+\alpha}}}
\left(\frac{\|a\|_{L^\infty(Q_T)}(m(p-1)+\alpha)^p }{\alpha
m^{p-1}p^p }\right)^{\frac{\alpha}{m(p-1)+\alpha}}\|\eu
\|_{L^\alpha(Q_T)}^{\frac{\alpha^2}{m(p-1)+\alpha}}. \end{aligned}\]
This implies
\[
\|\eu \|_{L^\alpha(Q_T)}^\alpha \le
\frac{|Q_T|}{\mu_p^{\frac{\alpha}{m(p-1)}}}
\left(\frac{\|a\|_{L^\infty(Q_T)}(m(p-1)+\alpha)^p }{\alpha
m^{p-1}p^p }\right)^{\frac{\alpha}{m(p-1)}}.
\]
In an analogous way we obtain that
\[
\|\ev  \|_{L^\alpha(Q_T)}^\alpha\le
\frac{|Q_T|}{\mu_q^{\frac{\alpha}{n(q-1)}}}
\left(\frac{\|b\|_{L^\infty(Q_T)}(n(q-1)+\alpha)^q }{\alpha
n^{q-1}q^q }\right)^{\frac{\alpha}{n(q-1)}},
\]
if $\ev  $ is a solution of the second equation of \eqref{system2}.
\end{proof}
An immediate consequence of Theorems \ref{thm:coercivo1} and
\ref{thm:noncoerciveweakcompetitive1} is the following existence
result for a single equation:
\begin{Corollary} Consider the problem
\eqref{equazione1bis} and assume that
\begin{enumerate}
\item the exponents $p, m$  are such that $p \in (1,2)$ and $m
>p$,
\item the delay $\tau\in(0,+\infty)$,
\item the functions $a$ and $K$ belong to $L^\infty(Q_T)$, are
extended to $\Omega\times\R$ by $T$-periodicity and are non-negative
for a.a. $(x,t)\in Q_T$, \item hypothesis $4$ of Corollary
$\ref{corollary}$ is satisfied with
\[
C=\dfrac{T}{\uk}\|a\|_{L^\infty(Q_T)}
\]
if $K(t,x)\ge\underline{k} >0$, or
\[
C= \frac{|Q_T|}{\mu_p^{\frac{\alpha}{m(p-1)}}}
\left(\frac{\|a\|_{L^\infty(Q_T)}(m(p-1)+\alpha)^p }{\alpha
m^{p-1}p^p }\right)^{\frac{\alpha}{m(p-1)}}
\]
if $K(t,x) \ge0$.
\end{enumerate}
Then problem \eqref{equazione1bis} has a $T$-periodic non-negative
and non-trivial solution.
\end{Corollary}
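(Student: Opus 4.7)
The plan is to obtain this statement as a direct consequence of Corollary~\ref{corollary}. Hypotheses~(1)--(3) of that corollary coincide with the present ones, so only hypothesis~(4) needs to be verified; that is, the uniform estimate
\[
\|u_\varepsilon\|_{L^\alpha(Q_T)}^\alpha \le C
\]
with $C$ prescribed by the statement must be established, for any non-negative solution $u_\varepsilon$ of the single-equation reduction of the approximating problem~\eqref{system2}. The argument splits according to whether $K$ is bounded away from zero or merely non-negative.

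In the coercive case $K\ge \uk >0$, I would mirror the proof of Theorem~\ref{thm:coercivo1}: multiply the approximating equation by $u_\varepsilon$, integrate over $\Omega$, pass to the limit in the Steklov averages, and integrate in time over $[0,T]$. The $T$-periodicity kills the time-derivative, and the non-negative contributions $\varepsilon\int_\Omega|\nabla u_\varepsilon|^p\,dx$ and $m^{p-1}\int_\Omega u_\varepsilon^{(m-1)(p-1)}|\nabla u_\varepsilon|^p\,dx$ can be discarded on the left. What remains is $\uk\,\|u_\varepsilon\|_{L^\alpha(Q_T)}^\alpha \le T\,\|a\|_{L^\infty(Q_T)}$, giving exactly the first prescribed value of $C$.

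In the non-coercive case $K\ge 0$, I would follow the proof of Theorem~\ref{thm:noncoerciveweakcompetitive1}: multiply the approximating equation by $u_\varepsilon^{\alpha-p+1}$, integrate over $Q_T$, and pass to the limit in the Steklov averages. Periodicity wipes out the time-derivative, and since $K\ge 0$ the nonlocal term is non-positive, so the right-hand side is dominated by $\|a\|_{L^\infty(Q_T)}\,\|u_\varepsilon\|_{L^\alpha(Q_T)}^\alpha$. The left-hand side, in turn, yields a positive multiple of $\bigl\|\nabla u_\varepsilon^{(m(p-1)+\alpha)/p}\bigr\|_{L^p(Q_T)}^p$. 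Combining this with the H\"older inequality of exponent $r=(m(p-1)+\alpha)/\alpha$ and the Poincar\'e inequality produces
\[
\|u_\varepsilon\|_{L^\alpha(Q_T)}^\alpha \le D\,\|u_\varepsilon\|_{L^\alpha(Q_T)}^{\alpha^2/(m(p-1)+\alpha)},
\]
with $D$ the explicit constant appearing in the second prescribed value of $C$. Since $\alpha^2/(m(p-1)+\alpha)<\alpha$, this self-bootstraps to the required $L^\alpha$-bound, and Corollary~\ref{corollary} then delivers the $T$-periodic non-negative non-trivial solution. No genuine new obstacle is expected: everything reduces to a careful single-equation replay of the two estimates already established in the two-equation framework, the only point demanding attention being the correct bookkeeping of the constants produced by the H\"older--Poincar\'e chain so that they match the $C$ in the statement.
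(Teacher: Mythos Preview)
Your proposal is correct and follows exactly the paper's own route: reduce to Corollary~\ref{corollary} and verify the $L^\alpha$-bound by specializing the proofs of Theorem~\ref{thm:coercivo1} (coercive case) and Theorem~\ref{thm:noncoerciveweakcompetitive1} (non-coercive case) to a single equation.

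One point in your coercive sketch needs correction. After multiplying by $u_\varepsilon$ and integrating over $Q_T$, periodicity does kill the time derivative, but what remains on the right is
\[
\int_0^T\Big(\int_\Omega K(\xi,t)u_\varepsilon^\alpha(\xi,t-\tau)\,d\xi\Big)\Big(\int_\Omega u_\varepsilon^p(x,t)\,dx\Big)dt\le\|a\|_{L^\infty(Q_T)}\|u_\varepsilon\|_{L^p(Q_T)}^p,
\]
which does \emph{not} yield $\uk\|u_\varepsilon\|_{L^\alpha}^\alpha\le T\|a\|_{L^\infty}$. The step you are missing, and which Theorem~\ref{thm:coercivo1} (via Theorem~\ref{thm:coercivo}) actually performs, is to divide the pointwise-in-$t$ inequality by $\big(\int_\Omega u_\varepsilon^2\,dx\big)^{p/2}$ \emph{before} integrating in time: then the time-derivative term becomes the exact derivative $\tfrac{1}{2-p}\tfrac{d}{dt}\big(\int_\Omega u_\varepsilon^2\,dx\big)^{(2-p)/2}$, which integrates to zero by periodicity, and the $u_\varepsilon^p$ factor on the right is absorbed via $\int_\Omega u_\varepsilon^p\,dx\le|\Omega|^{1-p/2}\big(\int_\Omega u_\varepsilon^2\,dx\big)^{p/2}$. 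This is the only bookkeeping item to fix; your non-coercive sketch is accurate as stated.
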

\begin{proof}
We just need to show that $\|\eu \|_{L^\alpha(Q_T)}^\alpha\le C$ for
a positive constant $C$. Proceeding  as in Theorem
\ref{thm:coercivo1} if $K(t,x)\ge\underline{k} >0$ or in Theorem
\ref{thm:noncoerciveweakcompetitive1} if $K(t,x) \ge0$, one has that
$\|\eu \|_{L^\alpha(Q_T)}^\alpha\le
\dfrac{T}{\uk_1}\|a\|_{L^\infty(Q_T)}$ or $\|\eu
\|_{L^\alpha(Q_T)}^\alpha \le
\frac{|Q_T|}{\mu_p^{\frac{\alpha}{m(p-1)}}}
\left(\frac{\|a\|_{L^\infty(Q_T)}(m(p-1)+\alpha)^p }{\alpha
m^{p-1}p^p }\right)^{\frac{\alpha}{m(p-1)}}$, respectively. The
thesis follows by Corollary \ref{corollary}.
\end{proof}

\begin{rem}
We underline the fact that the generalization presented in this
section can be extended to a single degenerate equation or to a
double degenerate system, namely when $p, q \ge 2$, as considered in
\cite{fnp}.
\end{rem}



\begin{thebibliography}{99}

    \bibitem{amn}
     \newblock G.A. Afrouzi, S. Mahdavi and Z. Naghizadeh,
     \newblock \emph{A numerical algorithm for finding positive solutions for classes of $p$-Laplacian equations},
     \newblock Appl. Math. Comput. \textbf{187} (2007), 1126--1130.

     \bibitem{ar}
     \newblock G.A. Afrouzi and S. H. Rasouli,
     \newblock \emph{Population models involving the $p$-Laplacian with indefinite weight and constant yield harvesting},
     \newblock Chaos Solitons Fractals \textbf{31} (2007), 404--408.

     \bibitem{afnp}
     \newblock W. Allegretto, G. Fragnelli, P. Nistri and D, Papini,
     \newblock \emph{Coexistence and optimal control problems for a degenerate predator-prey model},
     \newblock J. Math. Anal. Appl. \textbf{378} (2011), 528--540.

     \bibitem{ah}
     \newblock W. Allegretto and Y.X. Huang,
     \newblock \emph {A Picone's identity for the $p$-Laplacian and applications},
     \newblock Nonlin. Anal. \textbf{32} (1998),  819--830.

     \bibitem{AlMoVi}
      \newblock W. Allegretto, C. Mocenni and A. Vicino,
      \newblock \emph {Periodic solutions in modelling lagoon ecological interactions},
      \newblock J. Math. Anal. Biol.  \textbf{51} (2005), 367--388.

      \bibitem{an}
     \newblock W. Allegretto and P. Nistri,
     \newblock \emph {Existence and optimal control for periodic parabolic equations with nonlocal terms},
     \newblock IMA J. Math. Control Inform. \textbf{16} (1999), 43--58.

     \bibitem{AlPa}
     \newblock W. Allegretto and D. Papini,
     \newblock \emph {Analysis of a lagoon ecological model with anoxic crises and impulsive
     harvesting},
     \newblock Math, Comput. Modelling \textbf{47} (2008), 675--686.

\bibitem{AnDi}
\newblock S.N. Antontsev and J.I. D\'iaz,
\newblock \emph{New $ L^{1} $-gradient type estimates of solutions to one-dimensional quasilinear parabolic systems},
\newblock Commun. Contemp. Math. \textbf{12} (2010), 85--106.

     \bibitem{badii}
     \newblock M. Badii,
     \newblock \emph {Periodic solutions for a class of degenerate evolution problems},
     \newblock Nonlin. Anal. \textbf{44} (2001), 499--508.

     \bibitem{BD}
     \newblock M. Badii and J.I. D\'iaz,
     \newblock \emph{On the Time Periodic Free Boundary Associated to Some
     Nonlinear Parabolic Equations}
     \newblock Boundary Value Problems, Special Issue {\it
     Degenerate and Singular Differential Operators with Applications to
     Boundary Value Problems}, C.O. Alves and V. Radulescu Eds, 2010, 17
     pages.

     \bibitem{bb}
     \newblock C. Bandle and H. Brunner,
     \newblock \emph{Blow up in diffusion equations: A survey},
     \newblock J. Comput. Appl. Math. \textbf{97} (1998), 3--22.

     \bibitem{bs}
     \newblock A.L. Bertozzi and D. Slep\u{c}ev,
     \newblock \emph{Existence and uniqueness of solutions to an aggregation equation with degenerate
     diffusion},
     \newblock Comm. Pure Appl. Anal. \textbf{9} (2010), 1617--1637.

     \bibitem{c}
     \newblock C. Caisheng,
     \newblock \emph{Global existence and $L^\infty$ estimates of solution for doubly nonlinear parabolic equation},
     \newblock J. Math. Anal. Appl. \textbf{244} (2000), 133--146.

     \bibitem{cr}
     \newblock C. Caisheng and W. Ruyun,
     \newblock \emph {$L^\infty$ estimates of solution for the evolution $m$-Laplacian equation with initial value in $L^q(\Omega)$}
      \newblock Nonlin. Anal. \textbf{48} (2002), 607--616.

      \bibitem{cp}
      \newblock A. Calsina and J. Saldana,
      \newblock \emph {Equations for biological evolutions}
      \newblock Proc. Royal Soc. Edin.  \textbf{125-A} (1995), 939--958.

      \bibitem{cps}
      \newblock A. Calsina, C. Perello and J. Saldana,
      \newblock \emph {Nonlocal reaction-diffusion equations modelling predator-prey coevolution}
      \newblock Publicationes Matematiques \textbf{38} (1994), 315--325.

     \bibitem{db}
     \newblock E. DiBenedetto,
     \newblock ``Degenerate Parabolic Equations,''
     \newblock Universitext, Springer-Verlag, New York, 1993.

\bibitem{dbgv}
     \newblock E. DiBenedetto, U. Gianazza and V. Vespri,
     \newblock ``Harnack's Inequality for Degenerate and Singular Parabolic Equations ''
     \newblock   Springer Monographs in Mathematics, Springer New York Dordrecht Heidelberg London 2012.

\bibitem{DuHsu}
\newblock Y. Du and S.-B. Hsu,
\newblock \emph{On a Nonlocal Reaction-Diffusion Problem Arising from the Modeling
of Phytoplankton Growth},
\newblock SIAM J. Math. Anal. \textbf{42} (2010),
1305--1333.


\bibitem{fm}
\newblock A. Favini and G. Marinoschi,
\newblock \emph {Periodic behavior for a degenerate fast diffusion equation }
\newblock J. Math. Anal. Appl.  \textbf{351} (2009), 509--521.

\bibitem{F}
\newblock G. Fragnelli,
\newblock \emph{Positive periodic solutions for a system of anisotropic parabolic equations}
\newblock J. Math. Anal. Appl. {\bf 367} (2010), 204--228.

\bibitem{FA}
\newblock G. Fragnelli and A. Andreini,
\newblock \emph{Non-negative periodic solution of a system of $(p(x), q(x))-$Laplacian parabolic equations
with delayed nonlocal terms}
\newblock Differential Equations Dynam. Systems {\bf 15} (2007), 231--265.


\bibitem{fm1}
\newblock G. Fragnelli, D. Mugnai,
\newblock \emph{A k-uniform maximum principle when 0 is an eigenvalue}
\newblock "Modern aspects of the theory of partial differential equations", Oper. Theory Adv. Appl., 216 M. Ruzhansky, J. Wirth (Eds.), Birkhauser/Springer, Basel 2011, 139--–151.


\bibitem{fm2}
\newblock G. Fragnelli, D. Mugnai,
\newblock \emph{Quasi uniformity for the abstract Neumann antimaximum principle and applications}
\newblock  Math. Anal. Appl., in press.


     \bibitem{fnp}
     \newblock G. Fragnelli, P. Nistri and D. Papini,
     \newblock \emph{Non-trivial non-negative periodic solutions of a system of doubly degenerate
      parabolic equations with nonlocal terms},
     \newblock Discrete Contin. Dyn. Syst. \textbf{31} (2011), 35--64;
     \newblock \emph{Corrigendum}, Discrete Contin. Dyn. Syst. \textbf{33} (2013), 3831--3834.

\bibitem{fnp1}
\newblock G. Fragnelli, P. Nistri and D. Papini,
\newblock \emph {Positive periodic solutions and optimal control for a
distributed biological model of two interacting species},
\newblock Nonlin. Anal. Ser. B: Real World Applications \textbf{12} (2011), 1410--1428.

     \bibitem{GmC1}
     \newblock E. Gurtin and R. C. MacCamy,
     \newblock \emph{Diffusion models for age-structured populations},
     \newblock Math. Biosci. \textbf{54} (1981), 49--59.

     \bibitem{GmC}
     \newblock E. Gurtin and R. C. MacCamy,
     \newblock \emph{On the diffusion of biological populations},
     \newblock Math. Biosci. \textbf{33} (1977), 35--49.

     \bibitem{Hess}
     \newblock P. Hess,
    \newblock ``Periodic-parabolic boundary value problem and positivity,''
     \newblock Pitman Research Notes 247, John Wiley, New York, 1991.

     \bibitem{hpt}
     \newblock P. Hess, M.A. Pozio and A. Tesei,
     \newblock \emph{Time periodic solutions for a class of degenerate parabolic problems}
     \newblock Houston J. Math. \textbf{21} (1995), 367-394.

\bibitem{hwk}
     \newblock R. Huang, Y. Wang and Y. Ke,
     \newblock \emph{Existence of non-trivial nonnegative periodic solutions for a class of degenerate parabolic equations with nonlocal terms},
     \newblock Discrete Contin. Dyn. Syst. Ser. B \textbf{5} (2005), 1005--1014.

\bibitem{i1}
    \newblock A.V. Ivanov,
    \newblock \emph{H\"{o}lder estimates for equations of fast diffusion type},
    \newblock St. Petersburg Math. J. \textbf{6} (1995), 791--825.

\bibitem{i2}
   \newblock A.V. Ivanov,
   \newblock \emph{H\"{o}lder estimates for equations of slow and normal diffusion type},
   \newblock J. Math. Sci. \textbf{85} (1997), 1640--1644.

\bibitem{i3}
\newblock A.V. Ivanov,
\newblock  \emph {H\"{o}lder estimates for a natural class of equations of the type of fast diffusion},
 \newblock J. Math. Sci.  \textbf{89} (1998), 1607--1630.


\bibitem{Jin}
\newblock Y. Jin and X.-Q. Zhao,
\newblock \emph {Spatial Dynamics of a Nonlocal Periodic Reaction-Diffusion Model
with Stage Structure},
\newblock SIAM J. Math. Anal. \textbf{40} (2009),
2496--2516.


\bibitem{K}
\newblock A.S. Kala\v{s}nikov,
\newblock \emph{The nature of the propagation of perturbations in problems of
nonlinear heat conduction with absorption},
\newblock Zh. Vychisl. Mat. Mat. Fiz. {\bf 14} (1974), 891--905.

\bibitem{kl}
     \newblock B. Kawohl and P. Lindqvist,
     \newblock \emph{Positive eigenfunctions for the $p$-Laplace operator revisited},
     \newblock Analysis (Munich) \textbf{26} (2006), 545--550.

     \bibitem{khs}
     \newblock Y. Ke, R. Huang and J. Sun,
     \newblock \emph {Periodic solutions for a degenerate parabolic equation},
     \newblock Appl. Math. Lett.  \textbf{22} (2009), 910--915.

\bibitem{lsu}
     \newblock O. Lady\u{z}enskaja, V. Solonnikov and N. Ural'ceva,
     \newblock ``Linear and Quasilinear Equations of Parabolic Type,''
     \newblock Translations of Mathematical Monographs, \textbf{23},
      American Mathematical Society, Providence, RI, 1967.

      \bibitem{LeRoux}
      \newblock M.-N. Le Roux,
      \newblock \emph {Numerical solution of fast or slow diffusion equations},
      \newblock J. Comput. Appl. Math. \textbf{97} (1998), 121--136.

\bibitem{LeRouxMainge}
\newblock M.-N. Le Roux and P.-E. Mainge,
\newblock \emph{Numerical solution of a fast diffusion equation},
\newblock Math. Comp. \textbf{68} (1999), 461--485. 

\bibitem{lsw}
     \newblock J. Li, J. Sun and B. Wu,
     \newblock \emph{Periodic doubly degenerate parabolic equation with nonlocal terms},
     \newblock Computers and Mathematics with Applications \textbf{60} (2010), 490--500.

\bibitem{li-zhang}
     \newblock D. Li and X. Zhang,
     \newblock \emph{On a nonlocal aggregation model with nonlinear diffusion},
     \newblock Discrete Contin. Dyn. Syst. \textbf{27} (2010), 301--323.
     
\bibitem{Liu}
\newblock Z. Liu,
\newblock \emph{Periodic solutions for double degenerate quasilinear parabolic equations},
\newblock Nonlinear Anal. \textbf{51} (2002), 1245--1257.

\bibitem{m}
     \newblock N. Mizoguchi,
     \newblock \emph{Periodic solutions for degenerate diffusion equations},
     \newblock Indiana Univ. Math. J. \textbf{44} (1995), 413--432.

     \bibitem{md}
     \newblock C. Mu and G. Dong,
     \newblock \emph{Blow-up and existence for fast diffusion equations with general nonlinearities},
     \newblock Acta Math. Appl. Sinica (English Ser.) \textbf{15} (1999), 126--131.

\bibitem{MuZhengLiu}
\newblock C. Mu, P. Zheng and D. Liu,
\newblock \emph{Localization of solutions to a doubly degenerate parabolic equation with a strongly nonlinear source},
\newblock Commun. Contemp. Math. \textbf{14} (2012), 1250018.

\bibitem{n}
     \newblock M. Nakao,
     \newblock \emph{Periodic solutions of some nonlinear degenerate parabolic equations},
     \newblock J. Math. Anal. Appl. \textbf{104} (1984), 554--567.

\bibitem{ohara}
\newblock Y. Ohara,
\newblock \emph{$L^\infty-$estimates of solutions of some nonlinear degenerate parabolic equations},
\newblock Nonlin. Anal. \textbf{18} (1992), 413--426.

\bibitem{o}
     \newblock A. Okubo,
     \newblock ``Diffusion and Ecological Problems: Mathematical Models,''
     \newblock Ecology and Diffusion, Biomathematics \textbf{10}, Springer-Verlag, Berlin-New York, 1980.



\bibitem{os}
     \newblock S. Oruganti and R. Shivaji,
     \newblock \emph{Existence results for classes of $p$-Laplacian semipositone equations},
     \newblock Bound.Value Probl. \textbf{2006} (2006), 1--7.

     \bibitem{oss}
     \newblock S. Oruganti, J. Shi and R. Shivaji,
     \newblock \emph{Logistic equation with the $p$-Laplacian and constant yield
      harvesting},
     \newblock Abstr. Appl. Anal. \textbf{2004} (2004), 723--727.

\bibitem{pv}
     \newblock M. M. Porzio and V. Vespri,
     \newblock \emph{H\"{o}lder estimates for local solution of some doubly nonlinear degenerate parabolic equation},
     \newblock J. Differential Equations \textbf{103} (1993), 146--178.

\bibitem{v}
    \newblock J. L. V\'{a}zquez,
    \newblock ``The Porous Medium Equation. Mathematical Theory,''
    \newblock Oxford Mathematical Monographs, The Clarendon Press, Oxford University Press, Oxford, 2007.

    \bibitem{wywen}
     \newblock C. Wang, J. Yin and M. Wen,
     \newblock \emph{Periodic optimal control for a degenerate nonlinear diffusion equation},
     \newblock Appl. Math. Inform. Science, Comput. Math. Model. \textbf{17} (2006), 364--375.

\bibitem{wg}
     \newblock J. Wang and W. Gao,
     \newblock \emph{Existence of nontrivial nonnegative periodic solutions for a class of doubly degenerate parabolic equation with nonlocal terms},
     \newblock J. Math. Anal. Appl. \textbf{331} (2007), 481--498.

\bibitem{wgs}
     \newblock J. Wang, W. Gao and M. Su,
     \newblock \emph{Periodic solutions of non-Newtonian polytropic filtration equations with nonlinear sources},
     \newblock Appl. Math. Comput. \textbf{216} (2010), 1996--2009.

     \bibitem{wy} Y. Wang and J. Yin,
     \newblock \emph{Periodic solutions of a class of degenerate parabolic equations with Neumann boundary conditions},
     \newblock J. Math. Anal. Appl. \textbf{380} (2011), 57--68.

     \bibitem{wy1} Y. Wang and J. Yin,
     \newblock \emph{Periodic solutions of a class of degenerate parabolic system with delays},
     \newblock Nonlinear Anal. Real World Appl. \textbf{12} (2011), 2069--2076.

\bibitem{WYK}
Y. Wang, J. Yin and Y. Ke,
\newblock \emph{Coexistence solutions for a periodic competition model with nonlinear diffusion},
\newblock Nonlinear Anal. Real World Appl. \textbf{14} (2013), 1082--1091.

\bibitem{wyw1}
     \newblock Y. Wang, J. Yin and Z. Wu,
     \newblock \emph{Periodic solutions of evolution $p$-Laplacian equations with nonlinear sources},
     \newblock J. Math. Anal. Appl. \textbf{219} (1998), 76--96.

     \bibitem{wyw2}
     \newblock Y. Wang, J. Yin and Z. Wu,
     \newblock \emph{Periodic solutions of evolution $p$-Laplacian equations with weakly nonlinear
     sources}
     \newblock Int. J. Math. game Theory Algebra \textbf{10}, (2000), 67-77.

\bibitem{wyw}
     \newblock Y. Wang, J. Yin and Z. Wu,
     \newblock \emph{Periodic solutions of porous medium equations with weakly nonlinear sources},
     \newblock Northeast. Math. J. \textbf{16} (2000), 475--483.

     \bibitem{wwy}
     \newblock Y. Wang, Z. Wu and J. Yin,
     \newblock \emph{Time periodic solutions of a class of degenerate parabolic equations},
     \newblock Acta Math. Appl. Sinica (English Ser.) \textbf{16} (2000), 180--187.

\bibitem{weigao}
     \newblock Y.J. Wei and W.J. Gao,
     \newblock \emph{Periodic solutions to an evolution $p$-Laplacian system}
     \newblock J. Math. Res. Exposition (English Ed.) \textbf{30} (2010), 390--398.

     \bibitem{z}
     \newblock E. Zeidler,
     \newblock ``Nonlinear Functional Analysis and its Applications. Nonlinear Monotone Operators 2B'',
     \newblock Springer New York Berlin, 1989.

     \bibitem{zhwk}
     \newblock Q. Zhou, R. Huang, Y. Ke and Y. Wang,
     \newblock \emph {Existence of the nontrivial nonnegative periodic solutions for the quasilinear parabolic equations with nonlocal terms},
     \newblock Computers and Mathematics with Applications \textbf{50} (2005), 1293--1302.

\bibitem{zkwy}
     \newblock Q. Zhou, Y. Ke, Y. Wang and J. Yin,
     \newblock \emph{Periodic $p$-Laplacian with nonlocal terms},
     \newblock Nonlinear Anal. \textbf{66} (2007), 442--453.

\end{thebibliography}
\end{document}